\documentclass[reqno, 11pt]{amsart}
\usepackage{etex}
\usepackage[utf8]{inputenc}
\usepackage[english]{babel}
\usepackage{amssymb,amsmath,amsthm}
\usepackage{graphicx,geometry,caption,subcaption}
\usepackage{float}
\usepackage{faktor}
\usepackage{enumerate}
\usepackage{tikz}
\usetikzlibrary{patterns}
\usepackage{pgfplots}
\usepackage[all]{xy}
\usepackage{tikz-cd}
\usepackage{comment}
\usepackage{cite}
\usepackage{setspace}
\usepackage{amssymb,amstext}
\usepackage{xcolor}
\pgfplotsset{compat=1.10}
\usepackage{multirow}
\usepackage{xcolor}
\usepackage{ifthen}


\numberwithin{equation}{section}
\geometry{a4paper,twoside,top=3.5cm,bottom=3.5cm,left=3cm,right=3cm,headsep=1.5cm}


\newcommand{\extp}{\@ifnextchar^\@extp{\@extp^{\,}}}
\def\extp^#1{\mathop{\bigwedge\nolimits^{\!#1}}}
\makeatother

\theoremstyle{plain}
\newtheorem{theorem}{Theorem}[section]
\newtheorem{proposition}{Proposition}[section]
\newtheorem{corollary}{Corollary}[section]
\newtheorem{lemma}{Lemma}[section]

\newtheorem{remark}{Remark}

\theoremstyle{definition}
\newtheorem{defi}{Definition}[section]
\newtheorem{rem}{Remark}[section]



\newcommand{\Ytree}[3]{
	\begin{tikzpicture}[baseline=-3pt, scale=.6]
		
		\draw[thick] (0,0) -- (0,1);
		\draw[thick] (0.7,-0.7) -- (0, 0);
		\draw[thick] (-0.7,-0.7) -- (0,0);
		
		\node[above] at (0,1) {$#1$};
		\node[below] at (0.9,-0.5) {$#2$};
		\node[below] at (-0.9,-0.6) {$#3$};
		
	\end{tikzpicture}
}

\newcommand{\tree}[4]{
\begin{tikzpicture}[baseline=-3pt, scale=.6]

\draw[thick] (-1,0) -- (1,0);
\draw[thick] (-1,0.3) -- (-1, -0.3);
\draw[thick] (1,0.3) -- (1, -0.3);

\node[above] at (-1,0.3) {$#1$};
\node[below] at (-1,-0.3) {$#2$};
\node[above] at (1,0.3) {$#4$};
\node[below] at (1,-0.3) {$#3$};

\end{tikzpicture}
}



\onehalfspacing

\raggedbottom

\begin{document}

\title{Finite type invariants in low degrees and the Johnson filtration}

\author{Wolfgang Pitsch}
\author{Ricard Riba}
\address{Wolfgang Pitsch, Universitat Autònoma de Barcelona, Departament de Matemàtiques, Bellaterra, Spain}
\email{Wolfgang.Pitsch@uab.cat}
\address{Ricard Riba, Universitat de Girona, Departament d'Informàtica, Matemàtica Aplicada i Estadística, Girona, Spain}
\email{ricard.riba@udg.edu}
\thanks{This work was supported by MEC grantPID2020-116481GB-I00
	and the AGUR grant 2021-SGR-01015}

\subjclass[2020]{57K16, 20J05}

\keywords{Johnson subgroup, homology spheres, Heegaard splittings, finite type invariants.}

\date{\today}


\begin{abstract}
We study the behavior of the Casson invariant $\lambda$, its square, and Othsuki's second invariant $\lambda_2$ as functions on the Johnson subgroup of the mapping class group. We show that since $\lambda$ and $d_2 = \lambda_2 - 18 \lambda^2$ are invariants that are morphisms on respectively the second and the third level of the Johnson filtration they never vanish on any level of this filtration. In contrast we prove  that the invariant $\lambda_2-18\lambda^2 +3\lambda$ vanishes on the fifth level of the Johnson filtration, $\mathcal{M}_{g,1}(5)$,  and as a consequence we prove that, for instance, the Poincaré homology sphere does not admit any Heegaard splitting with gluing map an element in $\mathcal{M}_{g,1}(5)$. Finally we determine a surgery formula for Othsuki's second invariant $\lambda_2$.
\end{abstract}

\maketitle

\section{Introduction}\label{sec:intro}

Let $\Sigma_{g,1}$ denote an oriented surface of genus $g$ with a marked $2$-disk and denote by $\mathcal{M}_{g,1}$ its mapping class group relative to the marked disk. From the theory of Heegaard splittings we learn that any oriented $3$-manifold up to diffeomorphism can be obtained for some $g \geq 1$ by cutting the oriented $3$-sphere $\mathbb{S}^3$ along a standardly embedded copy of $\Sigma_{g,1}$ and gluing back the two resulting handlebodies using some element in $\mathcal{M}_{g,1}$. If one considers not any element in the mapping class group but only elements in the Torelli subgroup $\mathcal{T}_{g,1} \subseteq \mathcal{M}_{g,1}$, i.e. the kernel of the action on $H_1(\Sigma_{g,1},\mathbb{Z})$, then homologically it is like gluing back by the identity and hence the resulting manifold is an integral homology $3$-sphere. Furthermore, by a result of Morita~\cite{mor}, any integral homology sphere can be obtained from an element in $\mathcal{T}_{g,1}$ for some $g \geq 1$. The Torelli group is the first step in a descending filtration of the mapping class group known as the Johnson filtration
\[
\dots \mathcal{M}_{g,1}(k) \subseteq \cdots \subseteq \mathcal{M}_{g,1}(2) \subseteq \mathcal{M}_{g,1}(1)= \mathcal{T}_{g,1} \subseteq \mathcal{M}_{g,1}.
\]
One can then ask which $3$-manifolds do the different steps in this filtration parametrize. In a series of papers, Morita~ \cite{mor}, Pitsch \cite{pitsch3} and Faes \cite{faes2} showed that the groups $\mathcal{M}_{g,1}(k)$ for $k=2,3$ and $4$ respectively parametrize all integral homology spheres, and one could then wonder if this phenomenon continues, i.e. if any homology sphere can be built via a Heegaard splitting from an element arbitrary low in the Johnson filtration for an appropriate value of the genus $g$. We will show in this work that this is not the case: not all integral homology spheres can be built from elements in $\mathcal{M}_{g,1}(5)$.

To do so we study the behavior of the first finite type invariants in the sense of Ohtsuki~\cite{ohtsuki1} when viewed as functions on the Johnson subgroups $\mathcal{M}_{g,1}(2) \subseteq \mathcal{T}_{g,1}$. Finite type invariants are defined from the filtration of the group algebra $\mathbb{Q}\mathcal{T}_{g,1}$ by the powers of the augmentation ideal, and they are therefore related to the lower central series of the Torelli group.  To understand how they interact with the Johnson filtration,  we consider  for an invariant say $F$,  the functions $F_g$ it defines   on the Johnson subgroup  $\mathcal{M}_{g,1}(2)$ and to these we associate the trivialized $2$-cocycles $C_g^F(\phi,\psi)=F_g(\phi) + F_g(\psi)- F_g(\phi\psi)$ which measure the deviance of $F_g$ from being a homomorphism. As a first result we have (Theorem~\ref{thm:CassonHomomorphInv} and Lemma~\ref{lem:type2givesbilincocycle}):

\begin{theorem}\label{thm:intoinvdegmoins2}
\begin{enumerate}
	\item  Up to a multiplicative constant the Casson invariant is the unique invariant that is a homomorphism on the Johnson subgroups $\mathcal{M}_{g,1}(2)$.
	\item An invariant of degree $\leq 2$ has a $2$-cocycle that is a bilinear form on the Johnson subgroups $\mathcal{M}_{g,1}(2)$.
\end{enumerate}
\end{theorem}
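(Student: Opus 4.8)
The plan is to deduce both parts from the translation of Ohtsuki's degree filtration into the augmentation filtration of the group algebra, applied to the restriction of the invariant to the Johnson kernel. Write $J=\ker\bigl(\mathbb{Q}\mathcal{M}_{g,1}(2)\to\mathbb{Q}\bigr)$ for the augmentation ideal, the $\mathbb{Q}$-span of the elements $\phi-1$ with $\phi\in\mathcal{M}_{g,1}(2)$, and for an invariant $F$ normalized by $F(\mathbb{S}^3)=0$ let $\widetilde{F_g}\colon\mathbb{Q}\mathcal{M}_{g,1}(2)\to\mathbb{Q}$ denote the linear extension of $F_g$. The input I would rely on, which is the form suited to the Johnson kernel of Pitsch's correspondence between Ohtsuki's filtration and the augmentation filtration \cite{pitsch3} as set up in this paper, is the following: for $g$ in the stable range, $F$ has degree $\le d$ if and only if $\widetilde{F_g}$ vanishes on $J^{d+1}$. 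Granting this, the two assertions reduce to the identity, valid for $\phi,\psi\in\mathcal{M}_{g,1}(2)$ since $F_g(1)=0$,
\[
C_g^F(\phi,\psi)=\widetilde{F_g}(\phi+\psi-\phi\psi)=\widetilde{F_g}\bigl(1-(\phi-1)(\psi-1)\bigr)=-\,\widetilde{F_g}\bigl((\phi-1)(\psi-1)\bigr),
\]
which displays the cocycle as minus the value of $\widetilde{F_g}$ on a product of two elements of $J$.

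For (1): if $F_g$ is a homomorphism on $\mathcal{M}_{g,1}(2)$ for every $g$, then $F_g(1)=0$, hence $F(\mathbb{S}^3)=0$, and by the identity above $\widetilde{F_g}$ annihilates every product $(\phi-1)(\psi-1)$ with $\phi,\psi\in\mathcal{M}_{g,1}(2)$; as these span $J^2$, the criterion says $F$ has degree $\le1$. By the classical computation of the bottom of Ohtsuki's filtration \cite{ohtsuki1} the space of degree $\le1$ invariants is spanned by the constant invariant and the Casson invariant, and the normalization $F(\mathbb{S}^3)=0$ kills the constant summand, so $F$ is a scalar multiple of $\lambda$. Conversely $\lambda$ is itself a homomorphism on $\mathcal{M}_{g,1}(2)$, which is Morita's theorem \cite{mor} and is also recovered from the criterion since $\lambda$ has degree $1$ and $\lambda(\mathbb{S}^3)=0$. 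This gives (1).

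For (2): I would take $F$ of degree $\le2$, so that $\widetilde{F_g}$ vanishes on $J^3$, normalized by $F(\mathbb{S}^3)=0$. For $\phi_1,\phi_2,\psi\in\mathcal{M}_{g,1}(2)$ one has, inside $\mathbb{Q}\mathcal{M}_{g,1}(2)$,
\[
(\phi_1\phi_2-1)(\psi-1)-(\phi_1-1)(\psi-1)-(\phi_2-1)(\psi-1)=(\phi_1-1)(\phi_2-1)(\psi-1)\in J^3,
\]
so applying $-\widetilde{F_g}$ and using the identity above yields $C_g^F(\phi_1\phi_2,\psi)=C_g^F(\phi_1,\psi)+C_g^F(\phi_2,\psi)$; the symmetric computation $(\phi-1)(\psi_1-1)(\psi_2-1)\in J^3$ yields additivity in the second variable. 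Hence $C_g^F$ is biadditive, and therefore descends to a $\mathbb{Q}$-bilinear form on $\mathcal{M}_{g,1}(2)^{\mathrm{ab}}\otimes\mathbb{Q}$, which is the assertion.

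I expect the genuine obstacle to be the input invoked in the first paragraph: establishing that, stably in $g$, an invariant has degree $\le d$ precisely when the linear extension of its restriction to $\mathcal{M}_{g,1}(2)$ annihilates $J^{d+1}$. This requires matching the elementary surgeries that generate Ohtsuki's filtration with multiplication by Johnson kernel elements, checking that a family of $d+1$ such surgeries produces an element of $J^{d+1}$, and that such elements span $J^{d+1}$ for $g$ large; this is the substance of \cite{pitsch3} and of the setup in this paper. Once it is in place, (1) only adds the classical one-dimensionality (up to constants) of the degree $\le1$ part, and (2) is the purely formal augmentation-ideal computation above; the remaining points to be careful with are the standing normalization $F(\mathbb{S}^3)=0$, without which $C_g^F$ is a bilinear form only up to an additive constant, and the lower bound on the genus needed for the correspondence.
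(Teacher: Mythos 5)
Your treatment of part (2) is correct and is essentially the paper's own argument (Lemma~\ref{lem:type2givesbilincocycle}): one only needs the \emph{easy} direction of your dictionary, namely that a degree~$\leq 2$ invariant kills $(I\mathcal{M}_{g,1}(2))^{3}$, and the paper gets this from Johnson's theorem $\mathcal{M}_{g,1}(2)=\sqrt{\mathcal{T}_{g,1}(2)}$, which gives $I\mathcal{M}_{g,1}(2)\subseteq (I\mathcal{T}_{g,1})^{2}$ and hence $(I\mathcal{M}_{g,1}(2))^{3}\subseteq (I\mathcal{T}_{g,1})^{6}$, on which a type~$2$ invariant vanishes by definition of $\mathcal{S}_3=\mathcal{S}^{\mathcal{T}}_6$. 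Your augmentation-ideal identities then coincide with the paper's expansion of $(1-a)(1-b)(1-c)$.

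Part (1), however, has a genuine gap. Your entire argument rests on the converse direction of the asserted criterion: that if $\widetilde{F_g}$ annihilates $J^{2}=(I\mathcal{M}_{g,1}(2))^{2}$ then $F$ has degree $\leq 1$. This is not ``the substance of \cite{pitsch3} and of the setup in this paper'': what the setup provides is only the inclusion $(I\mathcal{M}_{g,1}(2))^{k}\subseteq (I\mathcal{T}_{g,1})^{2k}$, i.e.\ the implication from degree to vanishing, whereas Ohtsuki's degree is defined by vanishing on powers of the augmentation ideal of the much larger group $\mathcal{T}_{g,1}$, and nothing in the paper identifies the span of $(I\mathcal{M}_{g,1}(2))^{d+1}$ in $\mathbb{Q}\mathcal{S}$ with $\mathcal{S}_{d+1}$. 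For $d=1$ the missing implication is essentially equivalent to the statement you are trying to prove (granted Ohtsuki's computation of the degree~$\leq 1$ invariants), so the reduction is circular in effect. Note also that the theorem as stated makes no finite-type hypothesis on $F$ in part (1); your criterion would in particular force any such homomorphism-invariant to be of finite type, which is an additional unproven claim. The paper's actual proof (Theorem~\ref{thm:CassonHomomorphInv}) avoids all of this: a homomorphism-invariant factors, by conjugacy invariance and stability, through $H_1(\mathcal{M}_{g,1}(2);\mathbb{Q})_{\mathcal{AB}_{g,1}}\simeq \mathbb{Q}\oplus\mathfrak{m}_{g,1}(2)_{GL_g(\mathbb{Z})}$ (via the Massuyeau--Faes computation), the $\mathbb{Q}$ summand is accounted for by $\lambda$, and the remaining summand is killed because $\tau_2(\mathcal{M}_{g,1}(2))=\tau_2(\mathcal{A}_{g,1}(2))+\tau_2(\mathcal{B}_{g,1}(2))$ together with the double-coset condition forces any invariant to vanish there. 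To repair your part (1) you would either have to supply a proof of the equality of the Johnson-kernel augmentation filtration with Ohtsuki's filtration in $\mathbb{Q}\mathcal{S}$ (a nontrivial Garoufalidis--Levine-type comparison that is not established here), or abandon that route in favor of the structural argument above.
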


The natural domain for this bilinear form is the group $H_1(\mathcal{M}_{g,1}(2);\mathbb{Q})$ which has been computed by Massuyeau-Faes~\cite{MF}, and is isomorphic to $\mathbb{Q} \oplus \mathcal{M}_{g,1}(3)/\mathcal{M}_{g,1}(4) \otimes  \mathbb{Q} \oplus \mathcal{M}_{g,1}(2)/\mathcal{M}_{g,1}(3) \otimes  \mathbb{Q}$; further analysis shows (cf. Theorem~\ref{thm:bilinearonKsplits}):

\begin{theorem}
The $2$-cocycle of an invariant of degree $\leq 2$ splits as a bilinear form on $\mathbb{Q}$ plus a bilinear form on $\mathcal{M}_{g,1}(2)/\mathcal{M}_{g,1}(3) \otimes  \mathbb{Q}$.
\end{theorem}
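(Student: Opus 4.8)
The plan is to play the bilinearity of $C_g^F$ against the $\mathrm{Sp}(2g,\mathbb{Q})$-equivariance it inherits as the trivialized cocycle of an invariant, establishing that $\bar C_g^F$ is block–diagonal for the Massuyeau–Faes splitting, and then to pin down the remaining block by reducing to the three generators of the space of degree $\leq 2$ invariants. Concretely, by Lemma~\ref{lem:type2givesbilincocycle} the cocycle $C_g^F$ is bilinear on $\mathcal{M}_{g,1}(2)$, hence descends to a bilinear form $\bar C_g^F$ on $H_1(\mathcal{M}_{g,1}(2);\mathbb{Q})$. Write the decomposition of \cite{MF} as $H_1(\mathcal{M}_{g,1}(2);\mathbb{Q}) = \mathbb{Q}\oplus B\oplus C$ with $B = \mathcal{M}_{g,1}(3)/\mathcal{M}_{g,1}(4)\otimes\mathbb{Q}$ and $C = \mathcal{M}_{g,1}(2)/\mathcal{M}_{g,1}(3)\otimes\mathbb{Q}$, a decomposition of $\mathrm{Sp}(2g,\mathbb{Z})$-modules, the conjugation action on $H_1(\mathcal{M}_{g,1}(2);\mathbb{Q})$ factoring through $\mathrm{Sp}$. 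Since $C_g^F$ is the cocycle attached to an invariant it is $\mathrm{Sp}(2g,\mathbb{Z})$-equivariant, so $\bar C_g^F$ is an $\mathrm{Sp}(2g,\mathbb{Q})$-invariant bilinear form. For $g$ large enough $\mathbb{Q}$ is the trivial module, $B$ and $C$ contain no trivial constituent, and $B$ and $C$ share no irreducible $\mathrm{Sp}$-constituent (read off the explicit symplectic decompositions of the Johnson images underlying \cite{MF}); hence by Schur's lemma $\bar C_g^F = \beta^F_{\mathbb{Q}}\oplus\beta^F_B\oplus\beta^F_C$ is block–diagonal, and the theorem reduces to showing $\beta^F_B = 0$.

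Since $F\mapsto C_g^F$ is linear and ``$\beta^F_B=0$'' is a linear condition on $\bar C_g^F$, it suffices to verify it on a spanning set of the space of degree $\leq 2$ invariants, which (normalizing by $F(\mathbb{S}^3)=0$) is spanned by $\lambda$, $\lambda^2$ and $\lambda_2$, equivalently by $\lambda$, $\lambda^2$ and $d_2 = \lambda_2-18\lambda^2$, see \cite{ohtsuki1}. For $\lambda$ there is nothing to prove, as $C_g^\lambda\equiv 0$ by Theorem~\ref{thm:CassonHomomorphInv}. For $\lambda^2$, the fact that the induced function $\lambda_g$ is a homomorphism on $\mathcal{M}_{g,1}(2)$ gives $C_g^{\lambda^2}(\phi,\psi)=\lambda_g(\phi)^2+\lambda_g(\psi)^2-\lambda_g(\phi\psi)^2=-2\,\lambda_g(\phi)\,\lambda_g(\psi)$, so $\bar C_g^{\lambda^2}=-2\,\bar\lambda\otimes\bar\lambda$ for the functional $\bar\lambda\in H_1(\mathcal{M}_{g,1}(2);\mathbb{Q})^{*}$ induced by $\lambda$; as $\bar\lambda$ is $\mathrm{Sp}$-invariant it is supported on the trivial summand $\mathbb{Q}^{*}$ for $g$ large, so $\bar C_g^{\lambda^2}$ lies in the $\mathbb{Q}$-block and in particular $\beta^{\lambda^2}_B=0$. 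For $d_2$, recall that $d_2$ restricts to a homomorphism on $\mathcal{M}_{g,1}(3)$; hence $C_g^{d_2}$ vanishes on $\mathcal{M}_{g,1}(3)\times\mathcal{M}_{g,1}(3)$, and since the image of $\mathcal{M}_{g,1}(3)$ in $H_1(\mathcal{M}_{g,1}(2);\mathbb{Q})$ is exactly $\mathbb{Q}\oplus B$, this forces $\beta^{d_2}_{\mathbb{Q}}=0$ and $\beta^{d_2}_B=0$, so $\bar C_g^{d_2}=\beta^{d_2}_C$. By linearity, for an arbitrary degree $\leq 2$ invariant $F$ we obtain $\bar C_g^F=\beta^F_{\mathbb{Q}}\oplus\beta^F_C$, i.e.\ a bilinear form on $\mathbb{Q}$ plus a bilinear form on $\mathcal{M}_{g,1}(2)/\mathcal{M}_{g,1}(3)\otimes\mathbb{Q}$.

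The step I expect to be the main obstacle is the equivariance input: one must make precise in what sense the trivialized cocycle $C_g^F$ of an invariant carries an $\mathrm{Sp}(2g,\mathbb{Z})$-action (this reflects invariance of $F(\mathbb{S}^3_\phi)$ under the allowable changes of Heegaard splitting, and is cleanest through the correspondence between invariants and $\mathrm{Sp}$-invariant trivialized cocycles rather than through naive conjugation of $F_g$), since it is precisely this that kills the off-diagonal $\mathbb{Q}$--$C$ and $B$--$C$ blocks; without it, the vanishing on $\mathcal{M}_{g,1}(3)\times\mathcal{M}_{g,1}(3)$ only controls the $(\mathbb{Q}\oplus B)^{2}$ part. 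A secondary but still necessary point is the representation-theoretic check, from the explicit modules of \cite{MF}, that $B$ and $C$ have no common irreducible $\mathrm{Sp}$-constituent (and the range of $g$ for which this holds); should they share a constituent $W$, the $W$-part of the $B$--$C$ cross term would have to be removed by hand, e.g.\ again from the homomorphism property of $d_2$ on $\mathcal{M}_{g,1}(3)$ combined with bilinearity in the remaining variable, or by the alternative route of computing $C_g^{d_2}$ explicitly from the surgery formula for $\lambda_2$ and observing that it depends only on the second Johnson homomorphism, hence factors through $C\times C$.
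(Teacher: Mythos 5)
Your overall strategy (descend to $H_1(\mathcal{M}_{g,1}(2);\mathbb{Q})=\mathbb{Q}\oplus B\oplus C$, kill the cross blocks by equivariance, then kill $\beta^F_B$ by checking generators) has a genuine gap at its central step, the Schur argument. First, the cocycle of an invariant is \emph{not} $Sp_{2g}(\mathbb{Z})$-equivariant: the only conjugations allowed by the Heegaard gluing equivalence (Proposition~\ref{thm:Johnsonandhlgyspheres}) are by elements of $\mathcal{AB}_{g,1}$, whose homology action is the subgroup $GL_g(\mathbb{Z})\subsetneq Sp_{2g}(\mathbb{Z})$; you flag this as ``the main obstacle'' but it is not a technicality to be made precise, it is simply unavailable. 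Second, even if you had full $Sp$-equivariance, the premise ``$C$ contains no trivial constituent'' is false: $\mathfrak{m}_{g,1}(2)\simeq S^2(\Lambda^2H_\mathbb{Q})/\Lambda^4H_\mathbb{Q}$ contains a trivial $Sp$-summand (this is exactly Morita's core of the Casson invariant), and under the group that actually acts the coinvariants $\mathfrak{m}_{g,1}(2)_{GL_g(\mathbb{Z})}$ are nonzero --- the whole of Section~\ref{sec:billambda2} studies nonzero $GL_g(\mathbb{Z})$-invariant forms on $\mathfrak{m}_{g,1}(2)$. So Schur's lemma cannot kill the $\mathbb{Q}$--$C$ cross block, which is the heart of the theorem. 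The paper kills it by a completely different mechanism that your proof never invokes: the double coset condition $(3')$ (the cocycle vanishes when one entry comes from $\mathcal{A}_{g,1}(2)$ or $\mathcal{B}_{g,1}(2)$) combined with Proposition~\ref{prop:genm2glinv}, which says $\mathfrak{m}_{g,1}(2)_{GL_g(\mathbb{Z})}$ is generated by elements of $\tau_2(\mathcal{AB}_{g,1}(2))$. (The $\mathbb{Q}$--$B$ and $B$--$C$ blocks, by contrast, really are harmless, since $-Id\in GL_g(\mathbb{Z})$ acts by $-1$ on $\mathfrak{m}_{g,1}(3)$.)

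There is also a circularity in your treatment of $\beta^{d_2}_B$: in the paper, the facts that $\alpha=18$ and that $d_2=\lambda_2-\alpha\lambda^2$ is a homomorphism on $\mathcal{M}_{g,1}(3)$ are \emph{consequences} of the theorem you are proving, so you cannot ``recall'' them here. (This could be repaired: Proposition~\ref{prop:coc-finite-type} with $k_1=k_2=3$ plus Corollary~\ref{cor:ker_d1} shows directly that $C_g^{\lambda_2}$ restricted to $\mathcal{M}_{g,1}(3)^2$ is a multiple of $\lambda\cdot\lambda$, but you would have to say so.) More to the point, the whole spanning-set detour is unnecessary: the paper disposes of the $B=\mathfrak{m}_{g,1}(3)$ factor at the source, for \emph{every} degree $\leq 2$ invariant at once, via Lemma~\ref{lem:2cocyclefactors} --- the finite type condition makes the cocycle vanish on $\mathcal{M}_{g,1}(2)\times\sqrt{\mathcal{T}_{g,1}(3)}$ (since $2+3>4$), and $\mathcal{M}_{g,1}(2)/\sqrt{\mathcal{T}_{g,1}(3)}\otimes\mathbb{Q}\simeq\mathbb{Q}\oplus\mathfrak{m}_{g,1}(2)$ by Proposition~\ref{prop:H1KmodT3}, so the cocycle never sees $\mathfrak{m}_{g,1}(3)$ at all. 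I would recommend adopting that route and then concentrating on the genuinely delicate point, the $\mathbb{Q}$--$\mathfrak{m}_{g,1}(2)$ cross term.
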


The $\mathbb{Q}$-vector space of normalized (i.e. that are $0$ on the standard sphere) finite type invariants of degree $ \leq 2$ is of dimension $3$, generated by the Casson invariant $\lambda$, its square $\lambda^2$ and the second Ohtsuki invariant $\lambda_2$. A direct computation shows that the bilinear form on $\mathbb{Q}$ corresponds to the square of the Casson invariant  $\lambda^2$, and applying the previous result to $\lambda_2$ we show that there is an invariant $d_2$ of type $2$ which is a group homomorphism on the subgroups $\mathcal{M}_{g,1}(3)$.

The fact that both the Casson invariant and our invariant $d_2$ are group homomorphisms on one of the first $4$ steps of the Johnson filtration is  the key to prove the first geometrical application of our results (cf. Theorem~\ref{thm:invrastJohnsfilt}).

\begin{theorem}
For $g$ large enough, neither the Casson invariant nor the invariant $d_2$ vanish on any term of the Johnson filtration. 
\end{theorem}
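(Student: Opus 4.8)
The strategy is to turn the homomorphism property of $\lambda$ and $d_2$ into the statement that, at each level of the Johnson filtration, one only needs a \emph{single} mapping class on which the invariant is non-zero; to dispatch the first few levels with the realisation theorems of Morita, Pitsch and Faes; and, for the remaining levels, to promote a low-level Heegaard gluing to an arbitrarily deep term of the filtration without changing the underlying $3$-manifold.

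\emph{Reduction.} By Theorem~\ref{thm:intoinvdegmoins2}(1) the map $\phi\mapsto\lambda(\phi)$ is a group homomorphism on $\mathcal{M}_{g,1}(2)$ and, by construction, $\phi\mapsto d_2(\phi)$ is a group homomorphism on $\mathcal{M}_{g,1}(3)$; restricting, $\lambda$ is a homomorphism on every $\mathcal{M}_{g,1}(k)$ with $k\ge2$ and $d_2$ on every $\mathcal{M}_{g,1}(k)$ with $k\ge3$. Hence ``$\lambda$ does not vanish on $\mathcal{M}_{g,1}(k)$'' is equivalent to the existence of one $\phi\in\mathcal{M}_{g,1}(k)$ with $\lambda(\phi)\neq0$, and similarly for $d_2$; and once such a $\phi$ is produced for some genus it persists for all larger genera, since stabilising $\Sigma_{g,1}$ by a standard handle alters neither the Johnson depth of $\phi$ nor the homeomorphism type of the manifold it realises. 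For $1\le k\le4$ the groups $\mathcal{M}_{g,1}(k)$ parametrise every integral homology sphere, so it suffices that $\lambda$ and $d_2$ are not identically zero on homology spheres: $\lambda(\Sigma(2,3,5))\neq0$, and since $\lambda,\lambda^2,\lambda_2$ form a basis of the space of normalised degree-$\le2$ invariants, the non-zero combination $d_2=\lambda_2-18\lambda^2$ is not the zero function.

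\emph{Deepening a realising class.} Fix $k$ and, by Morita's realisation theorem, choose a genus $h$ and $\phi_0\in\mathcal{M}_{h,1}(2)$ realising a homology sphere $N$ with $\lambda(N)\neq0$ (for $d_2$, choose $N$ with $d_2(N)\neq0$). I would embed $\Sigma_{h,1}$ as an essential subsurface of a surface $\Sigma_{g,1}$ of much larger genus so that $\pi_1(\Sigma_{h,1})$ maps into the term $\Gamma_m\pi_1(\Sigma_{g,1})$ of the lower central series with $m$ arbitrarily large — e.g.\ a regular neighbourhood of a suitable configuration of separating simple closed curves lying deep in $\Gamma_m\pi_1(\Sigma_{g,1})$. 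Extension by the identity gives an injection $\mathcal{M}_{h,1}\hookrightarrow\mathcal{M}_{g,1}$ carrying $\mathcal{M}_{h,1}(2)$ into $\mathcal{M}_{g,1}(3m-1)$: an element of $\mathcal{M}_{h,1}(2)$ moves every loop of $\Sigma_{g,1}$ by an element of the third lower central term of $\pi_1(\Sigma_{h,1})\subseteq\Gamma_m\pi_1(\Sigma_{g,1})$, which lies in $\Gamma_{3m}\pi_1(\Sigma_{g,1})$ by a routine commutator estimate. Taking $m$ large, the image $\phi$ of $\phi_0$ lies in $\mathcal{M}_{g,1}(k)$; and, positioning the subsurface so that the standard Heegaard embedding $\Sigma_{g,1}\hookrightarrow\mathbb{S}^3$ restricts compatibly, the manifold realised by $\phi$ is $N$ connect-summed with the summand coming from the complementary region, which one arranges to have vanishing Casson invariant, so $\lambda(\phi)=\lambda(N)\neq0$ (resp.\ $d_2(\phi)=d_2(N)\neq0$). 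With the Reduction this proves the theorem.

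\emph{Main obstacle.} Everything hinges on the deepening construction, and the delicate point is to reconcile its two requirements: routing the generating curves of $\Sigma_{h,1}$ far enough into the lower central series of $\pi_1(\Sigma_{g,1})$ — which forces $g$ much larger than $h$, and which must be shown to be realisable by \emph{embedded} essential subsurfaces — while keeping enough hold on the $3$-dimensional picture to identify the complementary summand and verify that its Casson invariant vanishes (one cannot in general make the bounding curve bound a disc on both sides of the Heegaard surface, which would make that summand $\mathbb{S}^3$). An equivalent, more algebraic formulation, following the Pitsch–Faes method of deepening a gluing within its double coset modulo the handlebody subgroups, asks — via the Massuyeau–Faes computation $H_1(\mathcal{M}_{g,1}(2);\mathbb{Q})\cong\mathbb{Q}\oplus\mathcal{M}_{g,1}(3)/\mathcal{M}_{g,1}(4)\otimes\mathbb{Q}\oplus\mathcal{M}_{g,1}(2)/\mathcal{M}_{g,1}(3)\otimes\mathbb{Q}$ — that the image of $H_1(\mathcal{M}_{g,1}(k);\mathbb{Q})$ in $H_1(\mathcal{M}_{g,1}(2);\mathbb{Q})$ meet the $\mathbb{Q}$ summand dual to $\lambda$ for every $k$; this is the heart of the matter, the other two summands being governed by the Johnson homomorphisms.
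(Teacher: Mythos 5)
Your \emph{Reduction} step is sound and coincides with the paper's opening move: since $\lambda$ and $d_2$ are homomorphisms on $\mathcal{M}_{g,1}(2)$ and $\mathcal{M}_{g,1}(3)$ respectively, and since for $k\leq 4$ the groups $\mathcal{M}_{g,1}(k)$ realize every integral homology sphere, non-vanishing at levels $k\leq 4$ is immediate. The gap is entirely in the \emph{Deepening} construction for $k\geq 5$, and it is not a technical wrinkle you have postponed but an impossibility. If $S\cong\Sigma_{h,1}$ is a compact essential subsurface of $\Sigma_{g,1}$ of genus $h\geq 1$, the inclusion is compatible with the intersection pairings: for $x,y\in H_1(S;\mathbb{Z})$ one has $\omega(i_*x,i_*y)=\omega_S(x,y)$, so the image of $H_1(S)$ in $H_1(\Sigma_{g,1})$ carries a non-degenerate form of rank $2h$ and in particular is non-zero. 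Hence $\pi_1(S)$ can never be contained in the commutator subgroup of $\pi_1(\Sigma_{g,1})$, let alone in the $m$-th term of its lower central series for large $m$. Since any mapping class supported on $S$ that realizes a homology sphere with $\lambda\neq 0$ forces $S$ to have positive genus, no embedded subsurface can be ``routed deep into the lower central series,'' and the promotion of $\phi_0$ into $\mathcal{M}_{g,1}(k)$ collapses. The subsequent issues you flag (realizing the configuration by embedded subsurfaces, identifying the complementary summand) are therefore moot: the first requirement already has no solution.

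The paper's proof of Theorem~\ref{thm:invrastJohnsfilt} handles $k\geq 5$ with no geometric construction at all, by a descent argument you may want to compare with your closing ``algebraic formulation.'' Suppose $k\geq 5$ is minimal with $\lambda\vert_{\mathcal{M}_{g,1}(k)}=0$; the homomorphism $\lambda$ then induces a \emph{non-zero} map $\mathcal{M}_{g,1}(k-1)/\mathcal{M}_{g,1}(k)\rightarrow\mathbb{Q}$, hence a non-zero map $\mathfrak{m}_{g,1}(k-1)\rightarrow\mathbb{Q}$. On the other hand $\lambda$ is of finite type $1$, so by Lemma~\ref{lem:anulradsbgrpo} it vanishes on $\mathcal{T}_{g,1}(\ell)$ for $\ell\geq 3$, and therefore the composite $\mathfrak{t}_{g,1}(k-1)\rightarrow\mathfrak{m}_{g,1}(k-1)\rightarrow\mathbb{Q}$ is zero. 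Hain's surjectivity of $\mathfrak{t}_{g,1}\rightarrow\mathfrak{m}_{g,1}$ \cite{hain1} then forces the induced map on $\mathfrak{m}_{g,1}(k-1)$ to vanish, a contradiction. The same descent works for $d_2$ when $k\geq 6$; the case $k=5$ for $d_2$ requires the separate input of \cite[Theorem 7.8(iii)]{MSS}, which identifies $d_2\vert_{\mathcal{M}_{g,1}(5)}$ with a multiple of $\lambda$ (computed to be $-3\lambda$ in Proposition~\ref{prop:coefarecomputed}). So the heart of the matter is not a geometric deepening of Heegaard gluings but the comparison between the lower central series and the Johnson filtration at the level of the associated graded Lie algebras; without that input (or something equivalent) your argument does not close.
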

For the Casson invariant this result is due to Hain, we nevertheless include it here as our proof for both invariants is parallel. From this we can finally prove our main geometrical result (cf Theorem~\ref{thm:filtisnotfull}):

\begin{theorem}
The invariant $d_2 +3\lambda=\lambda_2 + 3 \lambda -18\lambda^2$ vanishes on $\mathcal{M}_{g,1}(5)$. Its value on the Poincaré sphere $\mathbb{P}$ is $24$, in particular $\mathbb{P}$ can not be described as a Heegaard splitting with gluing map an element in some of the groups $\mathcal{M}_{g,1}(5)$. 
\end{theorem}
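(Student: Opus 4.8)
The plan is to prove three things: that $f:=d_2+3\lambda=\lambda_2+3\lambda-18\lambda^2$ vanishes on $\mathcal{M}_{g,1}(5)$ for $g$ large, that $f(\mathbb{P})=24$, and that $\mathbb{P}$ admits no Heegaard splitting with gluing map in some $\mathcal{M}_{g,1}(5)$. The last is a formality given the first two: a Heegaard splitting of $\mathbb{P}$ with gluing map $\phi\in\mathcal{M}_{g_0,1}(5)$ can be stabilized by standard genus-one summands, and the stabilization maps carry $\mathcal{M}_{g,1}(5)$ into $\mathcal{M}_{g+1,1}(5)$, so $\mathbb{P}=S^3_{\phi'}$ with $\phi'\in\mathcal{M}_{g,1}(5)$ for arbitrarily large $g$; then the first statement would force $24=f(\mathbb{P})=0$. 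So it remains to prove the two numerical statements.

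For the vanishing, I would start from the fact that, by Theorem~\ref{thm:CassonHomomorphInv} and the analysis of $\lambda_2$ following Theorem~\ref{thm:bilinearonKsplits}, $\lambda$ is a homomorphism on $\mathcal{M}_{g,1}(2)$ and $d_2$ one on $\mathcal{M}_{g,1}(3)$, so that $f$ restricts to a homomorphism $\mathcal{M}_{g,1}(3)\to\mathbb{Q}$, and the task is to show it kills $\mathcal{M}_{g,1}(5)$. First I would record that $f$ already vanishes on $[\mathcal{M}_{g,1}(2),\mathcal{M}_{g,1}(2)]$. Indeed, the coefficient $18$ is precisely the one making the $\mathbb{Q}$-component of the bilinear form of $\lambda_2$ cancel against that of $\lambda^2$, so on $\mathcal{M}_{g,1}(2)$ the trivialized $2$-cocycle of $f$ is a symmetric bilinear form $B$ supported on $\mathcal{M}_{g,1}(2)/\mathcal{M}_{g,1}(3)\otimes\mathbb{Q}$; a short computation expanding $f(\alpha\beta\alpha^{-1}\beta^{-1})$ with the cocycle relation, together with $f(\gamma^{-1})=-f(\gamma)-B(\bar\gamma,\bar\gamma)$ and the symmetry of $B$, yields $f([\alpha,\beta])=0$ for all $\alpha,\beta\in\mathcal{M}_{g,1}(2)$. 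Since $f$ is a homomorphism on $\mathcal{M}_{g,1}(4)\supseteq[\mathcal{M}_{g,1}(2),\mathcal{M}_{g,1}(2)]$, it vanishes on the whole commutator subgroup, and in particular on $[\mathcal{M}_{g,1}(2),\mathcal{M}_{g,1}(3)]$.

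It remains to control $f$ on $\mathcal{M}_{g,1}(5)$ modulo $[\mathcal{M}_{g,1}(2),\mathcal{M}_{g,1}(3)]$, and this is the hard part. One conceptual route: $\lambda,\lambda^2,\lambda_2$ are invariants of the manifold $S^3_\phi$, hence unchanged under conjugating $\phi$ by the symmetries of the Heegaard surface extending over both handlebodies; because the graded piece $\mathcal{M}_{g,1}(5)/\mathcal{M}_{g,1}(6)\otimes\mathbb{Q}$ embeds into $H^{\otimes 7}$ and an odd tensor power of the standard symplectic module has no invariants for the relevant linear group, the corresponding coinvariant space through which $f|_{\mathcal{M}_{g,1}(5)}$ factors is — in the stable range, using the computations of Massuyeau--Faes~\cite{MF} — essentially the one-dimensional Casson line, so $d_2$ and $\lambda$ are proportional on $\mathcal{M}_{g,1}(5)$. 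The step I would actually carry out, however, is to use the surgery formula for $\lambda_2$ proved in the last section: for $\xi\in\mathcal{T}_{g,1}$ and $\eta\in\mathcal{M}_{g,1}(4)$ it expresses $\lambda_2(S^3_{[\xi,\eta]})$, together with the corresponding values of $\lambda$ and $\lambda^2$, through Johnson homomorphisms of $\xi,\eta$ and the Casson invariant, and in the combination $d_2+3\lambda$ all these terms cancel; since the image of $\mathcal{M}_{g,1}(5)$ in $H_1(\mathcal{M}_{g,1}(3);\mathbb{Q})$, through which $f$ factors, is stably spanned by the classes of $[\mathcal{M}_{g,1}(2),\mathcal{M}_{g,1}(3)]$ and of such commutators, this finishes the vanishing. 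The main obstacle is exactly pinning the constant to $-3$, i.e.\ that the "symplectically invisible'' part of $d_2$ on $\mathcal{M}_{g,1}(4)$ is $-3$ times that of $\lambda$: the cocycle bookkeeping does not detect it, and the surgery formula (or an explicit computation with Morita's description~\cite{mor} of the Casson invariant on the Torelli group) seems unavoidable.

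Finally, for the value on the Poincaré sphere: $\mathbb{P}$ is $\pm1$-surgery on a trefoil, so $\lambda(\mathbb{P})$ follows from Casson's surgery formula applied to $\Delta''_{3_1}(1)$, and $\lambda_2(\mathbb{P})$ from the surgery formula for $\lambda_2$ proved here (equivalently, from the known Ohtsuki series of $\Sigma(2,3,5)$); substituting the resulting values of $\lambda(\mathbb{P})$, $\lambda(\mathbb{P})^2$ and $\lambda_2(\mathbb{P})$ into $\lambda_2+3\lambda-18\lambda^2$ gives $24$, which together with the vanishing statement proves the assertion about Heegaard splittings.
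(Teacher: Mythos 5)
Your reduction of the geometric statement to the two numerical claims, and the evaluation $f(\mathbb{P})=\lambda_2(\mathbb{P})+3\lambda(\mathbb{P})-18\lambda(\mathbb{P})^2=39+3-18=24$, are fine. But the core of the theorem --- that $d_2=\lambda_2-18\lambda^2$ equals $-3\lambda$ on $\mathcal{M}_{g,1}(5)$ --- is exactly what your proposal does not establish, and you say so yourself (``the main obstacle is exactly pinning the constant to $-3$ \dots seems unavoidable''). Neither of the two inputs that the paper uses to close this gap appears in your sketch. First, the proportionality $d_2\vert_{\mathcal{M}_{g,1}(5)}=r\lambda$ is obtained in the paper not from a coinvariance argument on $\mathfrak{m}_{g,1}(5)$ but by clearing denominators so that $md_2$ is $\mathbb{Z}$-valued, observing that as a type~$2$ invariant it kills $\sqrt{\mathcal{T}_{g,1}(5)}$, and invoking \cite[Theorem 7.8]{MSS} to force the induced map on $\lim_g\mathcal{M}_{g,1}(5)/\sqrt{\mathcal{T}_{g,1}(5)}$ to factor through $\lambda$; your odd-tensor-power argument at best kills the $\mathfrak{m}_{g,1}(5)$-component of a homomorphism but says nothing about the extension by the Casson line, which is precisely where the nonzero constant $r$ lives. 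Second, the constants are then computed by two short tricks you do not find: $\alpha=18$ comes from the Lin--Wang connected-sum formula $\lambda_2(M_1\# M_2)=\lambda_2(M_1)+\lambda_2(M_2)+36\lambda(M_1)\lambda(M_2)$ applied to $\mathbb{S}^3_{f\# f}$ with $f\in\mathcal{M}_{g,1}(5)$, $\lambda(f)\neq 0$, and $r=-3$ comes from the orientation-reversal formula $\lambda_2(\overline{M})=\lambda_2(M)+6\lambda(M)$ applied to $\mathbb{S}^3_f$. Without some such computation the statement is simply not proved.

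A secondary but genuine error: your claim that $f$ vanishes on $[\mathcal{M}_{g,1}(2),\mathcal{M}_{g,1}(2)]$ rests on the symmetry of the bilinear form $B$, and $B$ is \emph{not} symmetric. The paper computes $B_g=3\tau_2^*(J_g)+\tfrac34\tau_2^*(Q_g)$, where $J_g$ pairs the $W(b^4)$-component of the first argument with the $W(a^4)$-component of the second (and $Q_g$ behaves analogously on $W(ab^3)\otimes W(a^3b)$); taking $\alpha\in\mathcal{B}_{g,1}(2)$ with $\tau_2\alpha\in W(b^4)$ and $\beta\in\mathcal{A}_{g,1}(2)$ with $\tau_2\beta\in W(a^4)$ one gets $d_2([\alpha,\beta])=B_g(\beta,\alpha)-B_g(\alpha,\beta)=-3\nabla_g(\tau_2\alpha\otimes\tau_2\beta)\neq 0$ for suitable choices, so the lemma you want is false as stated. (This reflects the fact that $F(\phi\psi)\neq F(\psi\phi)$ in general: $\psi\phi$ is conjugate to $\phi\psi$ only by $\phi$, which need not lie in $\mathcal{AB}_{g,1}$.) This is consistent with the paper, which only ever claims the proportionality $d_2=r\lambda$ on $\mathcal{M}_{g,1}(5)$, not on $\mathcal{M}_{g,1}(4)\supseteq[\mathcal{M}_{g,1}(2),\mathcal{M}_{g,1}(2)]$.
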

	
	Finally, in Section~\ref{sec:billambda2} we explicitly compute the bilinear form associated to Ohtsuki's invariant $\lambda_2$. This gives an algebraic version of a full surgery formula for this invariant.
	
{\bf Acknowledgements: } It is a pleasure to thank G.~Massuyeau and Q.~Faes for their remarks on a first version of this work. In particular they pointed out a boundary problem in the our original proof of Proposition~\ref{prop:ab_M(2)/T(5)}, the core computation that we use in our argument  is  due to them.

\section{Preliminary results}\label{sec:Prel}

\subsection{Heegard splittings of integral homology spheres}\label{subsec:Heegsplitt}

Let $\Sigma_{g,1}$ be an oriented surface of genus $g$ standardly embedded in the oriented $3$-sphere $\mathbb{S}^3$ with a marked small disc. 

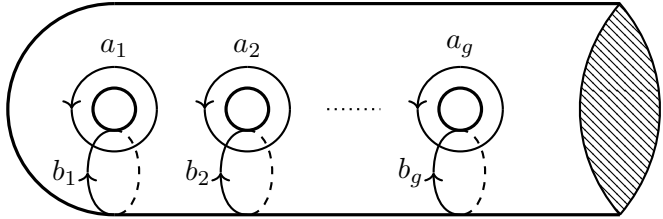
\begin{figure}[H]
	\begin{center}
		\begin{tikzpicture}[scale=.7]
			\draw[very thick] (-4.5,-2) -- (5,-2);
			\draw[very thick] (-4.5,2) -- (5,2);
			\draw[very thick] (-4.5,2) arc [radius=2, start angle=90, end angle=270];
			
			\draw[very thick] (-4.5,0) circle [radius=.4];
			\draw[very thick] (-2,0) circle [radius=.4];
			\draw[very thick] (2,0) circle [radius=.4];

			\draw[thick, dotted] (-0.5,0) -- (0.5,0);

			\draw[<-,thick] (1.2,0) to [out=90,in=180] (2,0.8);
			\draw[thick] (2.8,0) to [out=90,in=0] (2,0.8);
			\draw[thick] (1.2,0) to [out=-90,in=180] (2,-0.8) to [out=0,in=-90] (2.8,0);
			
			\draw[<-, thick] (-5.3,0) to [out=90,in=180] (-4.5,0.8);
			\draw[thick] (-3.7,0) to [out=90,in=0] (-4.5,0.8);
			\draw[thick] (-5.3,0) to [out=-90,in=180] (-4.5,-0.8) to [out=0,in=-90] (-3.7,0);
			
			\draw[<-,thick] (-2.8,0) to [out=90,in=180] (-2,0.8);
			\draw[thick] (-1.2,0) to [out=90,in=0] (-2,0.8);
			\draw[thick] (-2.8,0) to [out=-90,in=180] (-2,-0.8) to [out=0,in=-90] (-1.2,0);

			\draw[thick] (-4.5,-0.4) to [out=180,in=90] (-5,-1.2);
			\draw[->,thick] (-4.5,-2) to [out=180,in=-90] (-5,-1.2);
			\draw[thick, dashed] (-4.5,-0.4) to [out=0,in=0] (-4.5,-2);
			\draw[thick] (-2,-0.4) to [out=180,in=90] (-2.5,-1.2);
			\draw[->,thick] (-2,-2) to [out=180,in=-90] (-2.5,-1.2);
			\draw[thick, dashed] (-2,-0.4) to [out=0,in=0] (-2,-2);
			\draw[thick] (2,-0.4) to [out=180,in=90] (1.5,-1.2);
			\draw[->,thick] (2,-2) to [out=180,in=-90] (1.5,-1.2);
			\draw[thick, dashed] (2,-0.4) to [out=0,in=0] (2,-2);
			
			\node [left] at (-5,-1.2) {$b_1$};
			\node [above] at (-4.5,0.8) {$a_1$};
			\node [left] at (-2.5,-1.2) {$b_2$};
			\node [left] at (1.5,-1.2) {$b_g$};
			\node [above] at (-2,0.8) {$a_2$};
			\node [above] at (2,0.8) {$a_g$};
			
			\draw[thick,pattern=north west lines] (5,-2) to [out=130,in=-130] (5,2) to [out=-50,in=50] (5,-2);
			
		\end{tikzpicture}
	\end{center}
	\caption{Homology basis of $H_1(\Sigma_{g,1};\mathbb{Z})$}
	\label{fig:homology_basis}
\end{figure}

We fix on $\Sigma_{g,1}$ the $2g$ curves $a_i,b_i$ shown in Figure~\ref{fig:homology_basis}, whose homology classes give a symplectic basis of $H= H_1(\Sigma_{g,1},\mathbb{Z})$ endowed with its canonical intersection form $\omega:H \times H \rightarrow \mathbb{Z}$. The surface $\Sigma_{g,1}$ separates $\mathbb{S}^3$ into two genus $g$ handlebodies $\mathbb{S}^3=\mathcal{H}_g\cup -\mathcal{H}_g$ with opposite induced orientation.
Notice that by construction, the homology classes $b_i$, $1 \leq i \leq g$ give a basis of the Lagrangian $B = \ker (H_1(\Sigma_{g,1},\mathbb{Z}) \rightarrow H_1(\mathcal{H}_{g,1},\mathbb{Z}) )$ and the homology classes $a_i$, $1 \leq i \leq g$ give a basis of the supplementary  Lagrangian $A = \ker (H_1(\Sigma_{g,1},\mathbb{Z}) \rightarrow H_1(-\mathcal{H}_{g,1},\mathbb{Z}) ).$

Denote by $\mathcal{M}_{g,1}$ the mapping class group of $\Sigma_{g,1},$ i.e. the group of orientation-preserving diffeomorphisms of $\Sigma_{g,1}$ which are the identity on our fixed disc modulo isotopies which again fix that small disc point-wise. The connected sum along the marked disk of $\Sigma_{g,1}$ with a marked torus, slightly abusing notation, gives an embedding $\Sigma_{g,1} \rightarrow \Sigma_{g+1,1}$ and extending a diffeomorphism on $\Sigma_{g,1}$ that fixes the marking point-wise by the identity over the torus, by a classical result of Nielsen, we get an injective morphism $\mathcal{M}_{g,1} \hookrightarrow \mathcal{M}_{g+1,1}$, the stabilization map. The action of the mapping class group on the $H$ gives us a surjective map $\mathcal{M}_{g,1} \rightarrow Sp_{2g}(\mathbb{Z})$, whose kernel is by definition the Torelli group $\mathcal{T}_{g,1}$.

The embedding of $\Sigma_{g,1}$ in $\mathbb{S}^3$ determines three natural subgroups of $\mathcal{M}_{g,1}$: the subgroup $\mathcal{B}_{g,1}$ of mapping classes that extend to the inner handlebody
$\mathcal{H}_g$, the subgroup $\mathcal{A}_{g,1}$ of mapping classes that extend to the outer handlebody $-\mathcal{H}_g$ and their intersection $\mathcal{AB}_{g,1}$, which are the mapping classes that extend to $\mathbb{S}^3$. Denote by $\mathcal{V}$ the set of diffeomorphism classes of  closed  oriented smooth $3$-manifolds.
By the theory of Heegaard splittings we know that any element in $\mathcal{V}$ can be obtained by cutting $\mathbb{S}^3$ along $\Sigma_{g,1}$ for some $g$ and gluing back the two handlebodies by some element of the mapping class group $\mathcal{M}_{g,1}.$
The lack of injectivity of this construction is controlled by the handlebody subgroups $\mathcal{A}_{g,1}$ and $\mathcal{B}_{g,1}$; J.~Singer~\cite{singer} proved that there is a bijection:
\begin{align*}
	\lim_{g\to \infty}\mathcal{A}_{g,1}\backslash\mathcal{M}_{g,1}/\mathcal{B}_{g,1} & \longrightarrow \mathcal{V}, \\
	\phi & \longmapsto \mathbb{S}^3_\phi=\mathcal{H}_g \cup_{\iota_g\phi} -\mathcal{H}_g.
\end{align*}

If we restrict this map to mapping classes that contain an element of the Torelli group, denoting by $\mathcal{S}$ the set of oriented diffeomorphism classes of integral homology $3$-spheres, we have a bijection~\cite{mor}:
\begin{align*}
	\lim_{g\to \infty}\mathcal{A}_{g,1}\backslash\mathcal{T}_{g,1}/\mathcal{B}_{g,1} & \longrightarrow \mathcal{S}, \\
	\phi & \longmapsto \mathbb{S}^3_\phi=\mathcal{H}_g \cup_{\iota_g\phi} -\mathcal{H}_g.
\end{align*}
In both cases the limit is taken over the stabilization map. 

In the sequel we will need a more precise description of the homology action of $\mathcal{AB}_{g,1}$.  Writing the symplectic matrices according to the decomposition $H = A\oplus B$, the action on $H$ of the subgroup $\mathcal{AB}_{g,1}$ is given by the diagonal embedding:
\[
\begin{array}{rcl}
	GL_g(\mathbb{Z}) & \longrightarrow & Sp_{2g}(\mathbb{Z}) \\
	G & \longmapsto & \left( \begin{matrix}
							G& 0 \\
							0 & {}^tG^{-1}
						\end{matrix}
		 \right).
\end{array}
\]
Notice that the action on either $A$ or $B$ fully determines the homology action of an element in $\mathcal{AB}_{g,1}$.

\subsection{Finite type invariants}\label{subsec:finitetypeinv}

In  \cite{ohtsuki1} Ohtsuki introduced the notion of an invariant of finite type, related to a decreasing filtration of the vector space $\mathbb{Q}\mathcal{S}$ generated by $\mathcal{S}$. Very concisely, given an invariant $F: \mathcal{S} \rightarrow \mathbb{Q}$, we extend F linearly to $F: \mathbb{Q}\mathcal{S} \rightarrow \mathbb{Q}$. We call $F$ of \emph{finite type} if and only if it vanishes on one term of the filtration. A number of natural choices for the filtration have appeared, by algebraic split links (the original ones), borromean surgeries, Torelli surgeries, etc.   They were compared and showed to be essentially equal by Garoufalidis-Levine in  \cite{levine1}. We will hence choose the one best suited to our point of view, that is the one in terms of the descending central series of the Torelli group.

Composing  the Heegaard  splitting bijection due to Morita,
\[
\lim_{g\to \infty}\mathcal{A}_{g,1}\backslash\mathcal{T}_{g,1}/\mathcal{B}_{g,1}, \stackrel{\sim}{\longrightarrow} \mathcal{S},
\]
with the canonical maps from the Torelli group gives for each $g \geq 1$ a map
\[
\mathcal{T}_{g,1} \longrightarrow \mathcal{S}
\]
that extends linearly to a map from the group algebra of the Torelli group
\[
\mathbb{Q}\mathcal{T}_{g,1} \longrightarrow \mathbb{Q}\mathcal{S}.
\]

Let $I\mathcal{T}_{g,1}$ denote the augmentation ideal, and for $k \geq 1$ denote by  $\mathcal{S}_k^{\mathcal{T}}$ the span of $\bigcup_g (I\mathcal{T}_{g,1})^k$ in $\mathbb{Q}\mathcal{S}$. This gives a decreasing filtration
\[
\mathbb{Q}\mathcal{S} = \mathcal{S}_0 \supseteq \mathcal{S}_1^{\mathcal{T}} \supseteq \mathcal{S}_2^{\mathcal{T}} \supseteq \cdots \mathcal{S}_k^{\mathcal{T}} \supseteq \cdots
\]
For instance $\mathcal{S}_1^\mathcal{T}$ is generated by the differences $\mathbb{S}^3 - \mathbb{S}^3_f$ for  $f \in \mathcal{T}_{g,1}$, and $\mathcal{S}_2^\mathcal{T}$ is generated by the formal sums $\mathbb{S}^3 -\mathbb{S}^3_f + \mathbb{S}^3_{fh} - \mathbb{S}^3_{h}$, for $f,h \in \mathcal{T}_{g,1}$. It turns out that this filtration has some  trivial steps, by \cite[Corollary 1.21 and Theorem 4]{levine3}, for all $k \geq 1$, $\mathcal{S}^\mathcal{T}_{2k} = \mathcal{S}^\mathcal{T}_{2k-1}
$. This incites to re-number the filtration by setting 
\[
\forall k \geq 1, \ \mathcal{S}_k= \mathcal{S}^\mathcal{T}_{2k} = \mathcal{S}^\mathcal{T}_{2k-1}.
\]

\begin{defi}\label{def:finitetypeinv}
Let $k \geq 0$ be an integer. An invariant $F :\mathcal{S} \rightarrow \mathbb{Q}$ is said to be of finite type $\leq k$ if and only if its linear extension to $\mathcal{S}_0= \mathbb{Q}\mathcal{S}$ vanishes on $\mathcal{S}_{k+1}$.
\end{defi}  

So an invariant is of finite type $0$ if and only if it is constant on $\mathcal{S}$, as we will require all our invariants to be \emph{normalized}, i.e. that $F(\mathbb{S}^3)=0$, this means that a degree $0$ invariant is equal to $0$. Because $\mathcal{S}_1= \mathcal{S}^\mathcal{T}_{2} = \mathcal{S}^\mathcal{T}_{1}$, there are no invariants that are homomorphisms when restricted to the Torelli groups, a fact that can be proved directly \cite{pitsch} (the proof given there for integral-valued invariants applies equally well to rational invariants).
The first example of a non-trivial invariant of finite type is the Casson invariant $\lambda$, which is of degree $1$ and it is the first of a series of invariants $6\lambda=\lambda_1, \dots, \lambda_k,\dots$ constructed by Ohtsuki~\cite{ohtsuki1}.

It is a non-trivial fact due to Ohtsuki~\cite{ohtsuki1} that the vector spaces $\mathcal{S}_k/\mathcal{S}_{k+1}$ are all finite dimensional. In low degrees we have
\begin{enumerate}
	\item $\mathcal{S}_0/\mathcal{S}_1 = \mathbb{Q}$,detected by constant-valued invariants.
	\item $\mathcal{S}_1/\mathcal{S}_2 = \mathbb{Q}$, detected by $\lambda$, the Casson invariant.
	\item $\mathcal{S}_2/\mathcal{S}_3 = \mathbb{Q}^2$, detected by $\lambda^2$, the square of the Casson invariant,  and $\lambda_2$ the second Ohtsuki invariant.
\end{enumerate}

We introduce two descending series on the Torelli groups.  Firstly we have the central series, inductively defined by $\mathcal{T}_{g,1}(1)=\mathcal{T}_{g,1}$ and $\mathcal{T}_{g,1}(k+1)=[\mathcal{T}_{g,1}(k),\mathcal{T}_{g,1}]$ for $k\geq 1$. It is also convenient to consider the radical closure of the central series:
\[
\sqrt{\mathcal{T}_{g,1}(k)} = \{ x \in \mathcal{T}_{g,1} \ | \ \exists n \geq 1 \textrm{ such that } x^n \in \mathcal{T}_{g,1}(k) \}
\]

This radical closure is closely related to the filtration by the augmentation ideals of the groups ring, since (see for instance \cite[Theorem IV.1.5]{Passi}) for any $n \geq 1$ and any $g \in \mathcal{T}_{g,1}$
\[
g-1 \in (I\mathcal{T}_{g,1})^n \Leftrightarrow g \in \sqrt{\mathcal{T}_{g,1}(n)}.
\]

As a consequence, and taking into account the shift in grades between $\mathcal{S}_k$ and the lower central series we have:

\begin{lemma}\label{lem:anulradsbgrpo}
Let $F: \mathcal{S} \rightarrow \mathbb{Q}$ be a normalized invariant. If $F$ is of type $ k$ then $ \forall g \geq 0, \ F_g\vert_{\sqrt{\mathcal{T}_{g,1}(2k+1)}} =0$. 
\end{lemma}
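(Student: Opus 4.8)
The plan is to combine two ingredients: the identification of the augmentation-ideal filtration with the radical closure of the lower central series of the Torelli group, and the re-indexing that compares the Ohtsuki filtration $\mathcal{S}_k$ with the filtration $\mathcal{S}_k^{\mathcal T}$ by powers of the augmentation ideal. First I would unpack what "$F$ of type $k$" means: by Definition~\ref{def:finitetypeinv}, the linear extension of $F$ to $\mathbb{Q}\mathcal{S}$ vanishes on $\mathcal{S}_{k+1}$. Because of the re-numbering $\mathcal{S}_{k+1} = \mathcal{S}^{\mathcal T}_{2(k+1)} = \mathcal{S}^{\mathcal T}_{2k+1}$, this is the same as saying $F$ vanishes on $\mathcal{S}^{\mathcal T}_{2k+1}$, which is the span over all $g$ of the images in $\mathbb{Q}\mathcal{S}$ of the ideals $(I\mathcal{T}_{g,1})^{2k+1}$ under the Heegaard map $\mathbb{Q}\mathcal{T}_{g,1}\to \mathbb{Q}\mathcal{S}$.

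The second step is to pass from elements of the Torelli group to the filtration of the group algebra. Fix $g$ and take $\phi \in \sqrt{\mathcal{T}_{g,1}(2k+1)}$. By the quoted result (Theorem~IV.1.5 in \cite{Passi}), this is equivalent to $\phi - 1 \in (I\mathcal{T}_{g,1})^{2k+1}$. Applying the Heegaard map, the element $\mathbb{S}^3 - \mathbb{S}^3_\phi$ (the image of $1-\phi$, up to sign) lies in $\mathcal{S}^{\mathcal T}_{2k+1} = \mathcal{S}_{k+1}$. Since $F$ is of type $k$ and normalized, $F$ kills $\mathcal{S}_{k+1}$ and $F(\mathbb{S}^3)=0$, hence $F_g(\phi) = F(\mathbb{S}^3_\phi) = 0$. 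As $\phi$ and $g$ were arbitrary, $F_g\vert_{\sqrt{\mathcal{T}_{g,1}(2k+1)}} = 0$ for all $g \geq 0$.

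The only genuinely delicate point is bookkeeping with the two shifts that occur simultaneously: the factor of $2$ coming from Levine's vanishing $\mathcal{S}^{\mathcal T}_{2k} = \mathcal{S}^{\mathcal T}_{2k-1}$ (which is what makes $2k+1$ rather than $k+1$ appear in the statement), and the off-by-one between "type $\leq k$" (vanishing on $\mathcal{S}_{k+1}$) and the index $2k+1$. I would state explicitly the chain $\mathcal{S}_{k+1} = \mathcal{S}^{\mathcal T}_{2(k+1)} = \mathcal{S}^{\mathcal T}_{2k+1}$ so the reader sees that $(I\mathcal{T}_{g,1})^{2k+1}$ is exactly the right power. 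One should also note that $\sqrt{\mathcal{T}_{g,1}(2k+1)}$ is genuinely larger than $\mathcal{T}_{g,1}(2k+1)$ in general, but the argument via Passi's theorem handles the radical closure directly, so no extra work is needed there. Everything else is a one-line chase through the definitions.
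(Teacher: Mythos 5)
Your proposal is correct and follows exactly the route the paper intends: the lemma is stated there as an immediate consequence of Passi's criterion $\phi\in\sqrt{\mathcal{T}_{g,1}(n)}\Leftrightarrow \phi-1\in(I\mathcal{T}_{g,1})^n$ together with the re-indexing $\mathcal{S}_{k+1}=\mathcal{S}^{\mathcal{T}}_{2k+2}=\mathcal{S}^{\mathcal{T}}_{2k+1}$, which is precisely the bookkeeping you carry out. Your explicit chase through the definitions, including the remark that the radical closure is handled directly by Passi's theorem, matches the paper's argument with no gaps.
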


In their study of the relation between finite type invariants of homology spheres and the structure of the Torelli group, particularly its lower central series,  Garoufalidis and Levine~\cite{levine1} showed:
\begin{proposition}\label{prop:coc-finite-type}
	Let $F$ be a normalized invariant of homology spheres of finite type $k$. Then for any $\varphi\in \sqrt{\mathcal{T}_{g,1}(k_1)}$, $\psi\in \sqrt{\mathcal{T}_{g,1}(k_2)}$ with $k_1+k_2>2k$, the following equality holds
	$$F(\mathbb{S}^3_{h_1 h_2})=F(\mathbb{S}^3_{h_1})+F(\mathbb{S}^3_{h_2}).$$
\end{proposition}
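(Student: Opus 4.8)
The plan is to reduce everything to a single computation inside the group algebra $\mathbb{Q}\mathcal{T}_{g,1}$ and then push it forward to $\mathbb{Q}\mathcal{S}$. First I would translate the radical-closure hypotheses into statements about the augmentation ideal: by the quoted result \cite[Theorem IV.1.5]{Passi}, $\varphi \in \sqrt{\mathcal{T}_{g,1}(k_1)}$ is equivalent to $\varphi - 1 \in (I\mathcal{T}_{g,1})^{k_1}$, and likewise $\psi - 1 \in (I\mathcal{T}_{g,1})^{k_2}$. Since the augmentation ideal powers form a filtration of the ring by two-sided ideals, the product satisfies
\[
(\varphi-1)(\psi-1) = \varphi\psi - \varphi - \psi + 1 \in (I\mathcal{T}_{g,1})^{k_1+k_2}.
\]

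Next I would invoke the hypothesis $k_1 + k_2 > 2k$, i.e. $k_1 + k_2 \geq 2k+1$, to conclude $(\varphi-1)(\psi-1) \in (I\mathcal{T}_{g,1})^{2k+1}$. Applying the canonical map $\mathbb{Q}\mathcal{T}_{g,1} \to \mathbb{Q}\mathcal{S}$ that sends a mapping class $f$ to $\mathbb{S}^3_f$, and recalling that by definition $\mathcal{S}_{2k+1}^{\mathcal{T}}$ is the span of $\bigcup_g (I\mathcal{T}_{g,1})^{2k+1}$, the image of $(\varphi-1)(\psi-1)$, namely
\[
\mathbb{S}^3_{\varphi\psi} - \mathbb{S}^3_{\varphi} - \mathbb{S}^3_{\psi} + \mathbb{S}^3,
\]
lies in $\mathcal{S}_{2k+1}^{\mathcal{T}}$. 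By the renumbering $\mathcal{S}_{k+1} = \mathcal{S}_{2k+2}^{\mathcal{T}} = \mathcal{S}_{2k+1}^{\mathcal{T}}$, this element lies in $\mathcal{S}_{k+1}$. Since $F$ is of finite type $\leq k$, its linear extension vanishes on $\mathcal{S}_{k+1}$, hence
\[
F(\mathbb{S}^3_{\varphi\psi}) - F(\mathbb{S}^3_{\varphi}) - F(\mathbb{S}^3_{\psi}) + F(\mathbb{S}^3) = 0,
\]
and the normalization $F(\mathbb{S}^3) = 0$ gives the claimed additivity (with $h_1 = \varphi$, $h_2 = \psi$).

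There is no genuinely hard step here; the whole argument is a bookkeeping exercise. The one point that requires care is keeping the two distinct gradings straight — the one on augmentation ideal powers versus the re-indexed filtration $\mathcal{S}_k = \mathcal{S}_{2k}^{\mathcal{T}} = \mathcal{S}_{2k-1}^{\mathcal{T}}$ — so that the inequality $k_1 + k_2 > 2k$ lands the product in $\mathcal{S}_{k+1}$ rather than merely $\mathcal{S}_k$. The other mild subtlety is recognizing the image of the ring-theoretic product $(\varphi-1)(\psi-1)$ as precisely the "coboundary" combination $\mathbb{S}^3_{\varphi\psi} - \mathbb{S}^3_{\varphi} - \mathbb{S}^3_{\psi} + \mathbb{S}^3$, which is immediate once one notes that the map $\mathbb{Q}\mathcal{T}_{g,1} \to \mathbb{Q}\mathcal{S}$ is linear and sends the identity to $\mathbb{S}^3$, even though it is not multiplicative.
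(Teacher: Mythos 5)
Your proposal is correct and follows essentially the same route as the paper's proof: translate the radical hypotheses into augmentation-ideal membership via Passi, observe $(\varphi-1)(\psi-1)\in (I\mathcal{T}_{g,1})^{k_1+k_2}\subseteq (I\mathcal{T}_{g,1})^{2k+1}$, and apply the finite-type vanishing together with normalization. Your version is slightly more careful than the paper's in spelling out the re-indexing $\mathcal{S}_{k+1}=\mathcal{S}^{\mathcal{T}}_{2k+1}$ and the explicit use of $F(\mathbb{S}^3)=0$, but the argument is the same.
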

\begin{proof}
	View $F$ as a function of the groups ring $\mathbb{Q}\mathcal{T}_{g,1}$. Observe that if $h_i\in \sqrt{\mathcal{T}_{g,1}(k_i)}$, then $h_i-1\in(I\mathcal{T}_{g,1})^{k_i}$ and so $(h_1-1)(h_2-1)\in (I\mathcal{T}_{g,1})^{k_1+k_2}\subseteq (I\mathcal{T}_{g,1})^{2k+1}$.
	Therefore,
	$$F(\mathbb{S}^3_{h_1 h_2}-\mathbb{S}^3_{h_1} -\mathbb{S}^3_{h_2})=F(h_1 h_2-h_1 -h_2-1)=F((h_1-1)(h_2-1))=0.$$
\end{proof}

As a direct consequence, they prove the following corollary~\cite[Corollary 7.3]{MSS}:

\begin{corollary}\label{cor:finitetypeasmorhp}
	Let $F$ be a rational invariant of homology spheres of finite type $k$. Then we have a well-defined map
	$$F:\mathcal{T}_{g,1}/\sqrt{\mathcal{T}_{g,1}(2k+1)}\rightarrow \mathbb{Q} $$  
	and the restriction of $F$ to $\sqrt{\mathcal{T}_{g,1}(m)}/\sqrt{\mathcal{T}_{g,1}(2k+1)}$
	is a homomorphism if $m>k$.
\end{corollary}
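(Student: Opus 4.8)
The plan is to obtain the corollary as a formal consequence of Proposition~\ref{prop:coc-finite-type} and Lemma~\ref{lem:anulradsbgrpo}. The one preliminary point is that for every $n\geq 1$ the radical closure $\sqrt{\mathcal{T}_{g,1}(n)}$ is a \emph{normal subgroup} of $\mathcal{T}_{g,1}$, so that the quotients in the statement are genuine groups; this is standard, and in any case normality in $\mathcal{T}_{g,1}$ is clear from the normality of $\mathcal{T}_{g,1}(n)$ together with the fact that conjugation commutes with extraction of $n$-th roots, while the characterization $g-1\in (I\mathcal{T}_{g,1})^n\Leftrightarrow g\in\sqrt{\mathcal{T}_{g,1}(n)}$ recalled above and quoted from \cite{Passi} is what guarantees it is closed under multiplication. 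Throughout I write $F_g\colon \mathcal{T}_{g,1}\to\mathbb{Q}$, $\phi\mapsto F(\mathbb{S}^3_\phi)$, for the function obtained by composing $F$ with the Heegaard splitting map.

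First I would show that $F_g$ is constant on the cosets of $\sqrt{\mathcal{T}_{g,1}(2k+1)}$, hence descends to the quotient $\mathcal{T}_{g,1}/\sqrt{\mathcal{T}_{g,1}(2k+1)}$. Pick $\phi\in\mathcal{T}_{g,1}$ and $r\in\sqrt{\mathcal{T}_{g,1}(2k+1)}$. Since trivially $\phi\in\sqrt{\mathcal{T}_{g,1}(1)}$ and $1+(2k+1)=2k+2>2k$, the pair $(\phi,r)$ satisfies the hypotheses of Proposition~\ref{prop:coc-finite-type}, so
\[
F(\mathbb{S}^3_{\phi r})=F(\mathbb{S}^3_{\phi})+F(\mathbb{S}^3_{r}).
\]
By Lemma~\ref{lem:anulradsbgrpo} the term $F(\mathbb{S}^3_{r})=F_g(r)$ vanishes, hence $F_g(\phi r)=F_g(\phi)$; as $\phi$ and $r$ are arbitrary this is the asserted well-definedness.

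Next I would establish the homomorphism property. Fix $m>k$ and $\phi,\psi\in\sqrt{\mathcal{T}_{g,1}(m)}$. This time the pair $(\phi,\psi)$ meets the hypotheses of Proposition~\ref{prop:coc-finite-type} with $k_1=k_2=m$, because $m+m=2m>2k$ is precisely the condition $m>k$; therefore
\[
F_g(\phi\psi)=F(\mathbb{S}^3_{\phi\psi})=F(\mathbb{S}^3_{\phi})+F(\mathbb{S}^3_{\psi})=F_g(\phi)+F_g(\psi),
\]
so $F_g|_{\sqrt{\mathcal{T}_{g,1}(m)}}$ is a group homomorphism into $\mathbb{Q}$. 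For the quotient in the statement to make sense one needs $m\leq 2k+1$, in which case $\sqrt{\mathcal{T}_{g,1}(2k+1)}\subseteq\sqrt{\mathcal{T}_{g,1}(m)}$, and this homomorphism vanishes on $\sqrt{\mathcal{T}_{g,1}(2k+1)}$ again by Lemma~\ref{lem:anulradsbgrpo}; hence it factors through $\sqrt{\mathcal{T}_{g,1}(m)}/\sqrt{\mathcal{T}_{g,1}(2k+1)}$, as claimed.

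The substance has already been absorbed into Proposition~\ref{prop:coc-finite-type}, so I do not expect a real obstacle; the only step needing a little care is the well-definedness, where one must apply Proposition~\ref{prop:coc-finite-type} to the \emph{asymmetric} pair of weights $(1,2k+1)$ rather than to two elements of the same filtration stage, and then invoke Lemma~\ref{lem:anulradsbgrpo} to kill the residual term $F(\mathbb{S}^3_r)$. The remaining issue is purely group-theoretic, namely that $\sqrt{\mathcal{T}_{g,1}(2k+1)}$ really is a normal subgroup so that cosets are unambiguous and the quotient is a group; this is the one place where the argument leans on the augmentation-ideal description imported from \cite{Passi}.
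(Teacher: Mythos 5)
Your proof is correct and follows essentially the same route as the paper's: both apply Proposition~\ref{prop:coc-finite-type} with the asymmetric weights $(1,2k+1)$ to get well-definedness, kill the residual term $F(\mathbb{S}^3_r)$ via Lemma~\ref{lem:anulradsbgrpo}, and then note that the homomorphism property on $\sqrt{\mathcal{T}_{g,1}(m)}$ is immediate from the same proposition with weights $(m,m)$. Your added remarks on the normality of $\sqrt{\mathcal{T}_{g,1}(n)}$ and the constraint $m\leq 2k+1$ are sensible housekeeping that the paper leaves implicit.
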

\begin{proof}
	Let $f \in \mathcal{T}_{g,1}$ and $\phi \in \sqrt{\mathcal{T}_{g,1}(2k+1)}$. Then by the above Proposition,
	\begin{align*}
	F(\mathbb{S}^3_{f\phi}) & = F(\mathbb{S}^3_{f}) + F(\mathbb{S}^3_{\phi}) \\
							& = F(\mathbb{S}^3_f) \textrm{ because } \phi \in   \sqrt{\mathcal{T}_{g,1}(2k+1)}.
	\end{align*}
The fact that the induced map on $\sqrt{\mathcal{T}_{g,1}(m)}/\sqrt{\mathcal{T}_{g,1}(2k+1)}$
is a homomorphism if $m>k$ is then trivial.
\end{proof}

Secondly, we have the Johnson filtration, defined as follows. Consider $N_k(\pi)$, the $k$-th nilpotent quotient of $\pi$, the fundamental group of the complement of the interior of the marked disc in  our surface $\Sigma_{g,1}$ pointed at the boundary. We  grade these quotients so that $N_1(\pi)= H_{1}(\Sigma_{g,1},\mathbb{Z}) =H$. The canonical map $\mathcal{M}_{g,1} \rightarrow Aut(\pi)$ induces group homomorphisms $\rho_k:\mathcal{M}_{g,1} \rightarrow Aut(N_k(\pi))$, and we set $\mathcal{M}_{g,1}(k) = \ker \rho_k$. This gives us a filtration of the mapping class group by normal subgroups, the Johnson filtration and  by~\cite[Proposition 4.1]{mor-ab} this is a central series:
	\[
	\forall k,\ell \geq 1,\  \mathcal{M}_{g,1}(k+1) \subseteq\mathcal{M}_{g,1}(k) \textrm{ and } [\mathcal{M}_{g,1}(k),\mathcal{M}_{g,1}(\ell)] \subseteq \mathcal{M}_{g,1}(k+\ell).
	\]
	In particular $\forall k \geq 1$:
	\[
	\mathcal{T}_{g,1}(k) \subseteq   \mathcal{M}_{g,1}(k).
	\]
It turns out that the restriction of $\rho_{k+1}$
 to $\mathcal{M}_{g,1}(k)$ can be viewed as a map $\tau_k: \mathcal{M}_{g,1}(k) \rightarrow H \otimes \mathcal{L}_k$, where $\mathcal{L}_{k}$ is the degree $k$ part of the graded free Lie algebra generated by $H$. As the target group of $\tau_k$ is a torsion-free group, we have inclusions:
\[
\mathcal{T}_{g,1}(k) \subseteq \sqrt{\mathcal{T}_{g,1}(k)}  \subseteq \mathcal{M}_{g,1}(k)
\]

Since $\mathcal{M}_{g,1}(1) = \mathcal{T}_{g,1}$, the Johnson groups give a decreasing  filtration of the Torelli group, and we may restrict the Heegaard splitting map to each of the terms of the filtration to get  maps
\[
\lim_{g\to \infty}\mathcal{A}_{g,1}\backslash\mathcal{M}_{g,1}(k)/\mathcal{B}_{g,1}  \longrightarrow \mathcal{S}.
\]

By results of  Morita \cite{mor} for $k=1,2$, Pitsch \cite{pitsch3} for $k=3$ and Faes \cite{faes2} for $k=4$, we have:

\begin{proposition}\label{prpo:k=12334}
For $k=1,2,3$ and $4$ the above map is a bijection.
\end{proposition}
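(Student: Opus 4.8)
The plan is to separate the two halves of the statement, which are of very different natures: injectivity is essentially formal, while surjectivity is the real content and is exactly what the cited papers establish. For injectivity, I would first observe that the displayed map is, by construction, the restriction of Singer's bijection $\lim_{g\to\infty}\mathcal{A}_{g,1}\backslash\mathcal{M}_{g,1}/\mathcal{B}_{g,1}\stackrel{\sim}{\longrightarrow}\mathcal{V}$ --- or, on homology spheres, of Morita's refinement $\lim_{g\to\infty}\mathcal{A}_{g,1}\backslash\mathcal{T}_{g,1}/\mathcal{B}_{g,1}\stackrel{\sim}{\longrightarrow}\mathcal{S}$ recalled in Section~\ref{subsec:Heegsplitt} --- to the set of double cosets $\mathcal{A}_{g,1}\phi\mathcal{B}_{g,1}$ that meet $\bigcup_g\mathcal{M}_{g,1}(k)$. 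Since the double coset set in the statement is by definition this image and a restriction of an injective map is injective, nothing more needs to be checked for this half.

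The heart of the matter is surjectivity: every integral homology sphere $M$ is of the form $\mathbb{S}^3_\phi$ with $\phi\in\mathcal{M}_{g,1}(k)$ for some $g$. I would prove this as Morita does for $k\le 2$, Pitsch for $k=3$ and Faes for $k=4$, by raising the Johnson level of the gluing map one step at a time. Starting from $M=\mathbb{S}^3_\phi$ with $\phi\in\mathcal{T}_{g,1}=\mathcal{M}_{g,1}(1)$ (Morita's realization theorem), suppose inductively that $\phi\in\mathcal{M}_{g,1}(j)$ with $1\le j<k$, and let $\tau_j(\phi)\in H\otimes\mathcal{L}_j$ be the obstruction to $\phi$ lying in $\mathcal{M}_{g,1}(j+1)$. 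Replacing $\phi$ by $a\phi b$ with $a\in\mathcal{A}_{g,1}\cap\mathcal{M}_{g,1}(j)$ and $b\in\mathcal{B}_{g,1}\cap\mathcal{M}_{g,1}(j)$ does not change $M$, and, $\tau_j$ being a homomorphism on $\mathcal{M}_{g,1}(j)$, one has $\tau_j(a\phi b)=\tau_j(a)+\tau_j(\phi)+\tau_j(b)$. So the step goes through as soon as $\tau_j(\phi)$ lies in the subgroup $\tau_j\bigl(\mathcal{A}_{g,1}\cap\mathcal{M}_{g,1}(j)\bigr)+\tau_j\bigl(\mathcal{B}_{g,1}\cap\mathcal{M}_{g,1}(j)\bigr)$ of $H\otimes\mathcal{L}_j$; where this fails at fixed genus one invokes the stabilizations $\mathcal{M}_{g,1}(j)\hookrightarrow\mathcal{M}_{g+1,1}(j)$, which also leave $M$ unchanged, to reach the missing classes after adding finitely many handles. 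Iterating from $j=1$ to $j=k-1$ places $\phi$ in $\mathcal{M}_{g',1}(k)$ for some $g'\ge g$.

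The only genuinely non-formal ingredient, and the anticipated obstacle, is precisely this exhaustion statement at each Johnson degree $j$: that the $j$-th Johnson images of the two handlebody subgroups, together with stabilization, fill up the relevant submodule of $H\otimes\mathcal{L}_j$. For $j=1,2$ it reduces to a manageable computation with the abelianized Torelli groups and the handlebody Lagrangians $A,B$; for $j=3$ it rests on the known structure of $\mathcal{T}_{g,1}(3)/\mathcal{T}_{g,1}(4)$, as in Pitsch's argument; and for $j=4$ it is the considerably more delicate computation of Faes. I would not reprove any of these, but would cite Morita~\cite{mor} for $k=1,2$, Pitsch~\cite{pitsch3} for $k=3$ and Faes~\cite{faes2} for $k=4$; I would, however, emphasize that it is exactly this property that is expected to break down at $j=5$ --- which is consistent with, and indeed a reflection of, the non-realizability phenomenon that is the main geometric result of this paper.
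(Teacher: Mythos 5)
Your proposal is correct and matches the paper, which offers no argument beyond the citations to Morita~\cite{mor}, Pitsch~\cite{pitsch3} and Faes~\cite{faes2}: injectivity is indeed formal, the map being the restriction of Morita's bijection $\lim_{g}\mathcal{A}_{g,1}\backslash\mathcal{T}_{g,1}/\mathcal{B}_{g,1}\stackrel{\sim}{\to}\mathcal{S}$ to the double cosets meeting $\bigcup_g\mathcal{M}_{g,1}(k)$, and surjectivity is exactly what those references establish. Your sketch of their inductive strategy --- killing $\tau_j(\phi)$ by multiplying with handlebody elements once one knows $\tau_j(\mathcal{M}_{g,1}(j))=\tau_j\bigl(\mathcal{A}_{g,1}\cap\mathcal{M}_{g,1}(j)\bigr)+\tau_j\bigl(\mathcal{B}_{g,1}\cap\mathcal{M}_{g,1}(j)\bigr)$ --- is an accurate account of how they prove it.
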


We will be particularly interested in the bijection for $k=2$. Setting $\mathcal{A}_{g,1}(2) = \mathcal{A}_{g,1} \cap \mathcal{M}_{g,1}(2)$ and $\mathcal{B}_{g,1}(2) = \mathcal{B}_{g,1} \cap \mathcal{M}_{g,1}(2)$, the equivalence relation  on $\mathcal{M}_{g,1}(2)$ takes the form:

\begin{proposition}\cite[Proposition 6.7]{faes1}\label{thm:Johnsonandhlgyspheres}
The Heegaard splitting map induces a bijection:
\[
\begin{array}{rcl}
\displaystyle{\lim_{g \to \infty}} (\mathcal{A}_{g,1}(2) \setminus \mathcal{M}_{g,1}(2)/\mathcal{B}_{g,1}(2))_{\mathcal{AB}_{g,1}} & \longrightarrow & \mathcal{S} \\
\phi & \longmapsto & \mathbb{S}^3_\phi.	
\end{array}
\]
More precisely, two maps $\phi,\psi \in \mathcal{M}_{g,1}(2)$ are equivalent if and only if there exists maps $\xi_a \in \mathcal{A}_{g,1}(2)$, $\xi_b \in \mathcal{B}_{g,1}(2)$ and $\mu \in \mathcal{AB}_{g,1}$ such tat
\[
\phi = \mu \xi_a \psi \xi_b \mu^{-1}.
\]
\end{proposition}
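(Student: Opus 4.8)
The plan is to deduce everything from the genus-two bijection of Proposition~\ref{prpo:k=12334}. Since $\mathcal{A}_{g,1}(2)\subseteq\mathcal{A}_{g,1}$ and $\mathcal{B}_{g,1}(2)\subseteq\mathcal{B}_{g,1}$, and since for $\mu\in\mathcal{AB}_{g,1}$ one has $\mu\xi_a\in\mathcal{A}_{g,1}$ and $\xi_b\mu^{-1}\in\mathcal{B}_{g,1}$, any relation $\phi=\mu\xi_a\psi\xi_b\mu^{-1}$ of the stated form places $\phi$ and $\psi$ in the same double coset $\mathcal{A}_{g,1}\backslash\mathcal{M}_{g,1}(2)/\mathcal{B}_{g,1}$; hence $\mathbb{S}^3_\phi\cong\mathbb{S}^3_\psi$. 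Thus the displayed equivalence refines the one of Proposition~\ref{prpo:k=12334}, so the induced map is well defined, and it is surjective because the map of Proposition~\ref{prpo:k=12334} already is. The entire content is therefore \emph{injectivity}: I must show the double-coset relation is no finer than the stated one, i.e. that whenever $\phi=a\psi b$ with $\phi,\psi\in\mathcal{M}_{g,1}(2)$, $a\in\mathcal{A}_{g,1}$, $b\in\mathcal{B}_{g,1}$ (after stabilising to a common large genus), the handlebody factors can be pushed into the Johnson subgroups at the cost of a single conjugation by an element of $\mathcal{AB}_{g,1}$.

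First I would descend to homology. Projecting $\phi=a\psi b$ to $Sp_{2g}(\mathbb{Z})$ and using that $\phi,\psi$ are Torelli gives $\bar a\,\bar b=1$. Now $a$ preserves the Lagrangian $A=\ker(H\to H_1(-\mathcal{H}_g))$ while $b$ preserves $B$, so $\bar a$ lies in the stabiliser $P_A$ of $A$ in $Sp_{2g}(\mathbb{Z})$ and $\bar b^{-1}$ in the stabiliser $P_B$ of $B$; as $\bar a=\bar b^{-1}$, this common element lies in $P_A\cap P_B$, which by the description of the homology action of $\mathcal{AB}_{g,1}$ is precisely the diagonally embedded $GL_g(\mathbb{Z})$. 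Since $\mathcal{AB}_{g,1}\to GL_g(\mathbb{Z})$ is onto, I may choose $\mu\in\mathcal{AB}_{g,1}$ with $\bar\mu=\bar a$. Setting $\alpha:=\mu^{-1}a$ and $\beta:=b\mu$, one checks immediately that $\alpha\in\mathcal{A}_{g,1}\cap\mathcal{T}_{g,1}$, that $\beta\in\mathcal{B}_{g,1}\cap\mathcal{T}_{g,1}$, and that
\[
\mu^{-1}\phi\,\mu=\alpha\,\psi\,\beta .
\]
Hence $\phi=\mu\,\alpha\,\psi\,\beta\,\mu^{-1}$, and it only remains to upgrade $\alpha,\beta$ from the Torelli group to $\mathcal{M}_{g,1}(2)$.

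For this I would apply the first Johnson homomorphism $\tau_1\colon\mathcal{T}_{g,1}\to\wedge^3 H$, a homomorphism whose kernel is $\mathcal{M}_{g,1}(2)$. Since $\mathcal{M}_{g,1}(2)$ is normal in $\mathcal{M}_{g,1}$, the left-hand side of $\mu^{-1}\phi\mu=\alpha\psi\beta$ lies in $\ker\tau_1$, while the homomorphism property expands the right-hand side to $\tau_1(\alpha)+\tau_1(\psi)+\tau_1(\beta)$; using $\tau_1(\psi)=0$ this yields $\tau_1(\alpha)+\tau_1(\beta)=0$. The decisive input is then the computation of the Johnson images of the two handlebody Torelli groups: with respect to the decomposition $\wedge^3 H=\wedge^3 A\oplus(\wedge^2 A\otimes B)\oplus(A\otimes\wedge^2 B)\oplus\wedge^3 B$, the image $\tau_1(\mathcal{A}_{g,1}\cap\mathcal{T}_{g,1})$ lies in the summands carrying at least two $A$-factors while $\tau_1(\mathcal{B}_{g,1}\cap\mathcal{T}_{g,1})$ lies in those carrying at least two $B$-factors, so these subspaces meet only in $0$. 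As $\tau_1(\alpha)=-\tau_1(\beta)$ belongs to both, it vanishes, whence $\tau_1(\alpha)=\tau_1(\beta)=0$; that is, $\alpha\in\mathcal{A}_{g,1}\cap\mathcal{M}_{g,1}(2)=\mathcal{A}_{g,1}(2)$ and $\beta\in\mathcal{B}_{g,1}(2)$. Taking $\xi_a=\alpha$ and $\xi_b=\beta$ gives $\phi=\mu\xi_a\psi\xi_b\mu^{-1}$, exactly the required form.

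The main obstacle is this last computation of the handlebody Johnson images, namely the fact that they land in complementary, intersection-free halves of $\wedge^3 H$; this is where the three-dimensional topology genuinely enters, and here I would lean on the analyses of Morita and Pitsch. By comparison the homology step and the bookkeeping with $\tau_1$ are formal, and well-definedness and surjectivity are immediate from Proposition~\ref{prpo:k=12334}. I would also take care that $a,b,\mu,\xi_a,\xi_b$ all live at the common genus reached after stabilisation, which is harmless since the Johnson and handlebody subgroups are compatible with the stabilisation maps.
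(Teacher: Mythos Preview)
The paper does not give its own proof here; the statement is quoted from Faes~\cite[Proposition~6.7]{faes1}. Your overall strategy---reduce $a,b$ modulo $Sp_{2g}(\mathbb{Z})$ to land in the diagonal $GL_g(\mathbb{Z})$, peel off a $\mu\in\mathcal{AB}_{g,1}$, then use $\tau_1$ to push the remaining handlebody factors into $\mathcal{M}_{g,1}(2)$---is exactly the right shape and is essentially how Faes argues.

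There is, however, a genuine gap at the decisive step. Your claim that $\tau_1(\mathcal{A}_{g,1}\cap\mathcal{T}_{g,1})$ lands in the summands with at least two $A$-labels (and symmetrically for $B$), so that the two images meet only in $0$, is not correct. Already the $\tau_2$ computation recorded in Section~\ref{sec:Lagtraces} shows the pattern: $\tau_2(\mathcal{A}_{g,1}(2))$ and $\tau_2(\mathcal{B}_{g,1}(2))$ overlap in the large subspace $\tau_2(\mathcal{AB}_{g,1}(2))=W_0(a^3b)\oplus W(a^2b^2)\oplus W_0(ab^3)$. The same phenomenon occurs for $\tau_1$: the images $\tau_1(\mathcal{A}_{g,1}\cap\mathcal{T}_{g,1})$ and $\tau_1(\mathcal{B}_{g,1}\cap\mathcal{T}_{g,1})$ lie in the kernels of $\Lambda^3H\to\Lambda^3B$ and $\Lambda^3H\to\Lambda^3A$ respectively, and these kernels share the middle pieces $\Lambda^2A\otimes B\oplus A\otimes\Lambda^2B$. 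So from $\tau_1(\alpha)=-\tau_1(\beta)$ you cannot conclude $\tau_1(\alpha)=0$. What Faes actually uses is the finer statement $\tau_1(\mathcal{A}_{g,1}\cap\mathcal{T}_{g,1})\cap\tau_1(\mathcal{B}_{g,1}\cap\mathcal{T}_{g,1})=\tau_1(\mathcal{AB}_{g,1}\cap\mathcal{T}_{g,1})$: pick $\nu\in\mathcal{AB}_{g,1}\cap\mathcal{T}_{g,1}$ with $\tau_1(\nu)=\tau_1(\alpha)$, replace $\mu$ by $\mu\nu$, and then $\nu^{-1}\alpha\in\mathcal{A}_{g,1}(2)$ and $\beta\nu\in\mathcal{B}_{g,1}(2)$ as required. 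The ``analyses of Morita and Pitsch'' you invoke in fact support this corrected version, not the trivial-intersection claim.
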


\subsection{The rational Torelli and Johnson Lie algebras}\label{subsec:Johnsalgebra}

The two filtrations we are considering on  the Torelli group $\mathcal{T}_{g,1}$, the Johnson filtration $\lbrace \mathcal{M}_{g,1}(k) \rbrace_k$ and the lower central series $\lbrace \mathcal{T}_{g,1}(k) \rbrace_k$, are graded. In~\cite[Proposition 4.1]{mor-ab} it was proved that for any $k,l \geq 1$,  $[\mathcal{M}_{g,1}(k),\mathcal{M}_{g,1}(l)] \subseteq \mathcal{M}_{g,1}(k+l)$, and basic commutator calculus, shows that the same is true  for the lower central series  $[\mathcal{T}_{g,1}(k),\mathcal{T}_{g,1}(l)] \subseteq \mathcal{T}_{g,1}(k+l)$. We therefore have associated graded Lie algebras
\[
\text{Gr} \mathcal{T}_{g,1} = \bigoplus_{k=1}^\infty \mathcal{T}_{g,1}(k)/\mathcal{T}_{g,1}(k+1) \quad Gr  \mathcal{M}_{g,1} = \bigoplus_{k=1}^\infty \mathcal{M}_{g,1}(k)/\mathcal{M}_{g,1}(k+1)
\]
and we denote by
\begin{equation*}
	\begin{aligned}
		\mathfrak{t}_{g,1}=\bigoplus_{k=1}^\infty\mathfrak{t}_{g,1}(k),\qquad & \mathfrak{t}_{g,1}(k)= (\mathcal{T}_{g,1}(k)/\mathcal{T}_{g,1}(k+1))\otimes \mathbb{Q},
	\end{aligned}
\end{equation*}
respectively
\begin{equation*}
	\begin{aligned}
		\mathfrak{m}_{g,1}=\bigoplus_{k=1}^\infty\mathfrak{m}_{g,1}(k),\qquad & \mathfrak{m}_{g,1}(k)= (\mathcal{M}_{g,1}(k)/\mathcal{M}_{g,1}(k+1))\otimes \mathbb{Q},
	\end{aligned}
\end{equation*}
their rationalization. The canonical inclusion $\mathcal{T}_{g,1}(k) \hookrightarrow \mathcal{M}_{g,1}(k)$ in each degree $k\geq 1$, gives us a canonical comparison map of graded Lie algebras $\mathfrak{t}_{g,1} \rightarrow \mathfrak{m}_{g,1}$. It is a rather remarkable fact due to Hain~\cite{hain1}, that this map is surjective, but more importantly to us from \cite[Proposition 3.1 and Theorem 1.2]{MSS}, \cite[Theorem 1.2]{MSS} and \cite[Proposition 6.3]{mor_str} we learn that

\begin{theorem}\label{teo:dif_tm}
	For $k=3,4,5,6,$ the comparison map $\mathfrak{t}_{g,1}(k) \rightarrow  \mathfrak{m}_{g,1}(k)$ is an isomorphism.
\end{theorem}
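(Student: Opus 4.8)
The statement to prove is Theorem~\ref{teo:dif_tm}: for $k=3,4,5,6$ the comparison map $\mathfrak{t}_{g,1}(k)\to\mathfrak{m}_{g,1}(k)$ is an isomorphism. Since this map is already known to be surjective by Hain's theorem (it is surjective in every degree), the real content is \emph{injectivity} in degrees $3,4,5,6$, equivalently the statement that the natural inclusion $\mathcal{T}_{g,1}(k)\hookrightarrow\mathcal{M}_{g,1}(k)$ becomes an isomorphism after passing to the rationalized graded pieces, i.e.\ that $\mathfrak{t}_{g,1}$ and $\mathfrak{m}_{g,1}$ agree in this range. The plan is to \emph{not reprove this from scratch} but to assemble it from the cited literature: the isomorphism $\mathfrak{t}_{g,1}(3)\cong\mathfrak{m}_{g,1}(3)$ is exactly \cite[Proposition 3.1]{MSS} (combined with the computation of $\mathfrak{m}_{g,1}(3)$), the cases $k=4,5$ follow from the structural results of \cite[Theorem 1.2]{MSS} on the stable Johnson Lie algebra together with Morita's \cite[Proposition 6.3]{mor_str}, and $k=6$ is obtained by the same circle of ideas. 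So the proof is essentially a citation-and-bookkeeping argument identifying which combination of these inputs yields the claim in each degree.

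The key steps, in order, are as follows. First, recall that both Lie algebras receive a map from the ``expected'' target, namely the degree-$k$ part of the Lie algebra $\mathfrak{h}_{g,1}$ of symplectic derivations of the free Lie algebra on $H$ (via the Johnson homomorphisms $\tau_k$), and that $\mathfrak{t}_{g,1}\to\mathfrak{m}_{g,1}$ is compatible with these. Second, invoke Hain's surjectivity of $\mathfrak{t}_{g,1}(k)\to\mathfrak{m}_{g,1}(k)$ for all $k$, so it remains to check injectivity for $k\le 6$. Third, in degree $3$ cite \cite[Proposition 3.1]{MSS}, which identifies $\mathfrak{t}_{g,1}(3)$ with the appropriate $Sp$-module and matches it with $\mathfrak{m}_{g,1}(3)$ as computed by Morita \cite[Proposition 6.3]{mor_str}; the two being abstractly isomorphic finite-dimensional $Sp_{2g}(\mathbb{Q})$-modules of the same dimension, the surjection between them is forced to be an isomorphism. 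Fourth, for $k=4,5$ feed in \cite[Theorem 1.2]{MSS}, which gives a presentation (or at least a determination of the dimensions/$Sp$-decomposition) of $\mathfrak{m}_{g,1}$ in a stable range together with the comparison to $\mathfrak{t}_{g,1}$, and conclude injectivity by the same ``surjection between equal-dimensional modules'' argument degree by degree. Fifth, handle $k=6$ by the analogous dimension count, using whichever of the three references supplies the degree-$6$ piece. Throughout one should be careful that all statements are made for $g$ in a stable range (large compared to $k$), matching the hypothesis ``for $g$ large enough'' used elsewhere in the paper; for small $g$ one either restricts to the stable range or checks the low-genus cases separately.

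The main obstacle I expect is \emph{not} any single hard computation but rather the careful alignment of normalizations and gradings across the three sources \cite{MSS}, \cite{mor_str}, and the others: different authors grade the Johnson filtration differently (some put $\mathcal{T}_{g,1}$ in degree $0$, others in degree $1$), use $\mathbb{Z}$ versus $\mathbb{Q}$ coefficients, and state results stably versus for fixed $g$, so the bulk of the work is verifying that ``degree $k$'' means the same thing in each citation and that the $Sp$-module computations really do match up degree by degree. A secondary subtlety is that for $k=5,6$ the relevant results in \cite{MSS} may be phrased as ``the comparison map is an isomorphism'' directly rather than as a dimension count, in which case the proof is a one-line citation; but if instead only the dimensions (or Hilbert series) of $\mathfrak{t}_{g,1}$ and $\mathfrak{m}_{g,1}$ are available, one must additionally know that the surjection $\mathfrak{t}_{g,1}(k)\twoheadrightarrow\mathfrak{m}_{g,1}(k)$ has no kernel, which again follows from equality of dimensions but requires that both dimensions be pinned down independently. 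In short: the proof is short, the care is in the bookkeeping.
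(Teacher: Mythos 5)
The paper gives no proof of this theorem at all: it is quoted directly from \cite[Proposition 3.1 and Theorem 1.2]{MSS} and \cite[Proposition 6.3]{mor_str}, exactly the sources you assemble, so your outline (Hain's surjectivity plus degree-by-degree identification of $\mathfrak{t}_{g,1}(k)$ with $\mathfrak{m}_{g,1}(k)$ from those references) matches the paper's treatment. The only caveat worth keeping in mind is the one the authors themselves flag for the neighbouring Corollary~\ref{cor:ker_d1}, namely that the results of \cite{MSS} are stated for the closed/boundaryless setting and must be transported to $\Sigma_{g,1}$, which is part of the ``bookkeeping'' you already anticipate.
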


This is highly non-trivial, and by the same arguments as in of the proof \cite[Corollary~1.3]{MSS} applied to the case of a surface with boundary we have 
\begin{corollary}[\cite{MSS}]\label{cor:ker_d1}
	For any $k=3,4,5,6,7$, the group $\mathcal{T}_{g,1}(k)$ is a finite index subgroup of the kernel of the non-trivial homomorphism
	\[
	\lambda:\mathcal{M}_{g,1}(k)\rightarrow \mathbb{Q}.
	\]
	induced by the Casson invariant.
\end{corollary}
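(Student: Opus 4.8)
The plan is to follow the argument of \cite[Corollary~1.3]{MSS}, transposed from the closed surface to $\Sigma_{g,1}$. Fix $g$ large enough for Theorem~\ref{teo:dif_tm} to apply and for the Casson invariant not to vanish on $\mathcal{M}_{g,1}(k)$; this non-vanishing is due to Hain \cite{hain1} and is recovered below as Theorem~\ref{thm:invrastJohnsfilt}, and it is what makes $\lambda\colon\mathcal{M}_{g,1}(k)\to\mathbb{Q}$ non-trivial with image an infinite cyclic subgroup of $\mathbb{Q}$. Write $M_j=\mathcal{M}_{g,1}(j)$, $T_j=\mathcal{T}_{g,1}(j)$ and $\lambda_j=\lambda|_{M_j}$.

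The easy half is the inclusion $T_k\subseteq\ker\lambda_k$: the Casson invariant has finite type $1$, so Lemma~\ref{lem:anulradsbgrpo} gives $\lambda|_{\sqrt{T_3}}=0$, and since $T_k\subseteq T_3\subseteq\sqrt{T_3}$ for $k\geq 3$ we get $T_k\subseteq\ker\lambda_k$. As $\lambda$ depends only on the homology sphere $\mathbb{S}^3_\phi$, the subgroup $\ker\lambda_k$ is normal in $M_k$ and $M_k/\ker\lambda_k$ is infinite cyclic; hence the statement is equivalent to $[\ker\lambda_k:T_k]<\infty$, that is, to $M_k/T_k$ being virtually infinite cyclic with its cyclic quotient realised by $\lambda$.

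To establish this I would play off the two graded Lie algebras against each other. By Hain's surjectivity theorem $\mathfrak{t}_{g,1}(j)\to\mathfrak{m}_{g,1}(j)$ is onto for all $j$, and since the Johnson quotients $M_j/M_{j+1}$ are finitely generated abelian (they inject into $H\otimes\mathcal{L}_j$) the cokernel of the natural map $T_j\to M_j/M_{j+1}$ is torsion, hence finite, for every $j$; consequently every graded piece of the filtration of $M_k/T_k$ by the images of the subgroups $M_jT_k$ is finite, so $M_k/T_k$ is residually finite. Theorem~\ref{teo:dif_tm} upgrades this to an isomorphism in degrees $3\leq j\leq 6$, controlling the bottom of the filtration; combined with Hain's surjectivity in degree $7$ and with the fact, read off from the computation behind Theorem~\ref{thm:intoinvdegmoins2} and the Massuyeau--Faes presentation of $H_1(\mathcal{M}_{g,1}(2);\mathbb{Q})$, that the Casson invariant accounts for the unique residual rational discrepancy between $\mathcal{T}_{g,1}(k)$ and $\mathcal{M}_{g,1}(k)$ in the relevant range, one concludes that $\ker\lambda_k/T_k$ is finite. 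The bound $k\leq 7$ appears precisely because one needs the isomorphism through degree $6$ together with only surjectivity in degree $7$.

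The step I expect to be the main obstacle is this last point: converting ``all graded pieces of $M_k/T_k$ are finite, and $M_k/T_k$ surjects onto $\mathbb{Z}$'' into ``$[\ker\lambda_k:T_k]<\infty$''. Concretely, one must show that, apart from the one-dimensional $\lambda$-direction, $T_k$ is closed inside $\ker\lambda_k$ for the topology defined by the Johnson filtration, i.e. that $\bigcap_j M_jT_k$ is commensurable with $T_k$; this uses the finite generation of $\mathcal{T}_{g,1}$ (hence $g$ large), the residual torsion-free nilpotence of $\mathcal{M}_{g,1}(k)$, and the precise degree range of Theorem~\ref{teo:dif_tm}, and it is the surface-with-boundary analogue of the computation in \cite[Corollary~1.3]{MSS}.
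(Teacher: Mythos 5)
The first thing to note is that the paper does not actually prove this corollary: it is stated with the attribution ``by the same arguments as in the proof of \cite[Corollary 1.3]{MSS} applied to the case of a surface with boundary'', so there is no in-paper argument to compare against, and your overall plan of transposing that proof is exactly what the authors intend. Your easy half is correct: $\lambda$ has type $1$, so Lemma~\ref{lem:anulradsbgrpo} kills it on $\sqrt{\mathcal{T}_{g,1}(3)}\supseteq\mathcal{T}_{g,1}(k)$, and since $\mathcal{T}_{g,1}(k)$ is normal with $\mathcal{T}_{g,1}/\mathcal{T}_{g,1}(k)$ finitely generated nilpotent, the statement is indeed equivalent to $\ker\lambda|_{\mathcal{M}_{g,1}(k)}=\sqrt{\mathcal{T}_{g,1}(k)}$.

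The hard half has a genuine gap, exactly where you flag it, and two of the ingredients you propose will not close it. First, the finiteness of the graded pieces $\mathcal{M}_{g,1}(j)\mathcal{T}_{g,1}(k)/\mathcal{M}_{g,1}(j+1)\mathcal{T}_{g,1}(k)$ is a red herring: it follows from Hain's surjectivity alone, whether or not the corollary is true, and it cannot distinguish $\ker\lambda_k$ (where $\mathcal{T}_{g,1}(k)$ is claimed to have finite index) from $\mathcal{M}_{g,1}(k)$ itself (where it has infinite index). The filtration $\lbrace\mathcal{M}_{g,1}(j)\mathcal{T}_{g,1}(k)\rbrace_j$ does not separate these two subgroups, because $\lambda$ is non-trivial on every $\mathcal{M}_{g,1}(j)$ and the descent never terminates; so ``residual finiteness'' gives nothing here. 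Second, ``the Casson invariant accounts for the unique residual rational discrepancy'' is a paraphrase of the conclusion, not an input, and extracting it from the Massuyeau--Faes computation is circular inside this paper: Proposition~\ref{prop:H1KmodT3} quotes the $k=3$ case of this very corollary (from \cite{faes1}) in its proof. What is actually needed is (a) a base case at $k=3$ or $4$, which is where the computation of $H_1(\mathcal{M}_{g,1}(2);\mathbb{Q})$ and the results of \cite{MSS} and \cite{faes1} genuinely enter, and (b) the elementary induction step: if $\ker\lambda|_{\mathcal{M}_{g,1}(k)}=\sqrt{\mathcal{T}_{g,1}(k)}$ and $\mathfrak{t}_{g,1}(k)\to\mathfrak{m}_{g,1}(k)$ is \emph{injective}, then
\[
\ker\lambda|_{\mathcal{M}_{g,1}(k+1)}=\ker\lambda|_{\mathcal{M}_{g,1}(k)}\cap\mathcal{M}_{g,1}(k+1)=\sqrt{\mathcal{T}_{g,1}(k)}\cap\mathcal{M}_{g,1}(k+1)=\sqrt{\mathcal{T}_{g,1}(k+1)},
\]
the last equality because injectivity in degree $k$ forces every element of $\mathcal{T}_{g,1}(k)\cap\mathcal{M}_{g,1}(k+1)$ to have a power in $\mathcal{T}_{g,1}(k+1)$. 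This also corrects your explanation of the bound $k\le 7$: it comes from the injectivity in degrees $3,\dots,6$ supplied by Theorem~\ref{teo:dif_tm}, applied one degree below, and not from Hain's surjectivity in degree $7$ --- surjectivity holds in every degree and would otherwise yield the corollary for all $k$.
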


Since $\mathbb{Q}$ is torsion free,  equivalently this shows that for $k= 3,4,5,6$ and $7$
\[
\sqrt{\mathcal{T}_{g,1}(k)} = \ker \lambda\vert_{\mathcal{M}_{g,1}(k)}.
\]

\subsection{Representation theory for $GL_g(\mathbb{Z})$}\label{sec:TensorandTrees}
In~\cite{levine1} Garouffalidis and Levine lay out a graph interpretation of the representation theory for $GL_g(\mathbb{Q})$ and $Sp_{2g}(\mathbb{Q})$, which is very convenient for computations. We will adapt here some of their results to $GL_g(\mathbb{Z}) \subseteq GL_g(\mathbb{Q})$, as this is the group that encodes the homological action of $\mathcal{AB}_{g,1}$. The result we are interested in is a variant of~\cite[Proposition 2.18]{levine1}. Recall that we have fixed a symplectic basis $\{a_i,b_i\}_{1 \leq i \leq g}$ of $H$, and hence of $H_\mathbb{Q}=H \otimes \mathbb{Q}$. We will call an elementary tensor $u_1\otimes u_2 \dots \otimes u_p \in  \otimes^p H_\mathbb{Q}$ a \emph{basic tensor} if all its entries $u_i$ belong to our preferred symplectic basis.

\begin{proposition}\label{prop:chords}
For any  integer $1 \leq n < g$, the coinvaiants quotient $(\otimes^{2n} H_\mathbb{Q})_{GL_g(\mathbb{Z})}$ is generated by the images of the basic tensors  for which  for each $1 \leq  i \leq g$, either the pair of elements $\{a_i,b_i\}$ appears exactly once or does not appear at all.  Moreover, all  basic tensors for which the number of appearances of $a_i$ is different to that of $b_i$ for some $i$ are $0$ in the coinvariants quotient.
\end{proposition}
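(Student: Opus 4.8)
The plan is to exploit that $GL_g(\mathbb{Z})$ contains enough elements to mimic, on the level of coinvariants, the action of the torus and the elementary matrices in $GL_g(\mathbb{Q})$. The action of $GL_g(\mathbb{Z})$ on $H_\mathbb{Q}$ is the one coming from the embedding $GL_g(\mathbb{Z}) \hookrightarrow Sp_{2g}(\mathbb{Z})$, $G \mapsto \operatorname{diag}(G, {}^tG^{-1})$, so $G$ acts on $A=\langle a_i\rangle$ by $G$ and on $B=\langle b_i\rangle$ by ${}^tG^{-1}$. First I would treat the \emph{weight} (or multidegree) obstruction: for each index $i$ let $d_i$ be the number of $a_i$'s minus the number of $b_i$'s appearing in a basic tensor $t$. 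The diagonal matrix $G = \operatorname{diag}(1,\dots,1,-1,1,\dots,1)$ (with the $-1$ in slot $i$) lies in $GL_g(\mathbb{Z})$, sends $a_i \mapsto -a_i$, $b_i \mapsto -b_i$, and fixes all other basis vectors; hence it sends $t \mapsto (-1)^{(\#a_i)+(\#b_i)}t$. So if $(\#a_i)+(\#b_i)$ is odd then $t$ is $2$-torsion, hence $0$, in the coinvariants of the $\mathbb{Q}$-vector space. This is not yet the full weight statement, so next I would use the matrix $G$ which acts as $a_i \mapsto a_i + a_j$ (an elementary transvection $E_{ij}(1)$, which is in $GL_g(\mathbb{Z})$), whose transpose-inverse acts as $b_j \mapsto b_j - b_i$: expanding the relation $t \equiv G\cdot t$ and extracting the appropriate piece of the resulting telescoping/filtered identity (filter basic tensors by $\sum_k |d_k|$, or argue one index pair at a time) forces any basic tensor with $d_i \neq 0$ for some $i$ to be a combination of basic tensors with strictly smaller $|d_i|$ at that slot, and one concludes by induction that it vanishes. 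This establishes the "moreover" clause.

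With the weight-zero reduction in hand, I would next show that any weight-zero basic tensor is equivalent in the coinvariants to one in which each pair $\{a_i,b_i\}$ occurs at most once (i.e. $a_i$ and $b_i$ each appear zero or one times). Since the total degree is $2n$ and $n<g$, a counting argument (pigeonhole on the $g$ available index pairs) will show there is always a "free" index not yet used, which is what lets us trade a repeated pair for a fresh one. The mechanism is the same elementary-transvection trick: if $a_i$ (hence also $b_i$, by weight $0$) appears with multiplicity $\geq 2$, pick an index $j$ not appearing in $t$ at all and use $E_{ij}(1) \in GL_g(\mathbb{Z})$, which does $a_i \mapsto a_i + a_j$, $b_j \mapsto b_j - b_i$, $a_j, b_i$ fixed; expanding $t \equiv E_{ij}(1)\cdot t$ and projecting onto the summand with exactly one $a_j$ (and no $b_j$, since $b_j$ does not occur in $t$) expresses a tensor with one fewer $a_i$ in terms of $t$ plus tensors with strictly more $j$-entries — iterating and using that the multiset of multiplicities decreases in an appropriate well-order finishes the reduction. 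One must check the bookkeeping so that the process terminates; organizing the induction on, say, $\sum_i \binom{\#a_i}{2}$ (which the basic move strictly decreases once combined with the weight-zero condition, after using a fresh index) is the clean way.

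The main obstacle is making the inductive/termination argument genuinely rigorous rather than hand-wavy: the transvection relations do not let one rewrite a single basic tensor as a single basic tensor, only as a $\mathbb{Q}$-linear combination, so one needs a carefully chosen well-ordering on basic tensors (by multidegree support and by repetition count) and must verify that each application of an elementary matrix, after extracting the relevant graded piece, produces only strictly-smaller terms plus the target. A secondary subtlety is ensuring a fresh index is always available: this is exactly where the hypothesis $n < g$ enters, since a weight-zero basic tensor of degree $2n$ uses at most $n$ distinct index pairs among $a$'s and correspondingly among $b$'s, leaving at least one of the $g$ pairs untouched. Once these combinatorial points are nailed down the proposition follows; I would present it as two lemmas (the weight-zero reduction, then the multiplicity reduction) to keep the induction bookkeeping readable. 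Compare \cite[Proposition 2.18]{levine1}, from which the overall strategy is adapted, the only new ingredient being that we must work with $GL_g(\mathbb{Z})$ rather than $GL_g(\mathbb{Q})$ and hence use only integral elementary matrices and sign matrices in place of the full torus.
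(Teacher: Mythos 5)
Your overall strategy (sign matrices for the parity obstruction, integral transvections together with a fresh index guaranteed by $n<g$ for the multiplicity reduction) is the right one and is essentially the paper's, but the core rewriting mechanism you describe has a genuine gap. You propose to expand the relation $t \equiv E_{ij}(1)\cdot t$ in the coinvariants and then ``project onto the summand with exactly one $a_j$'' (and, in the weight-reduction step, to ``extract the appropriate piece of the telescoping/filtered identity''). This is not a legitimate operation: the coinvariants quotient $(\otimes^{2n} H_\mathbb{Q})_{GL_g(\mathbb{Z})}$ carries no multidegree grading, precisely because the transvections you are using do not preserve the multidegree, so a relation that holds in the quotient cannot be split into its homogeneous components. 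Concretely, if $a_i$ occurs $p\geq 2$ times in $t$ and neither $a_j$ nor $b_j$ occurs, then $E_{ij}(1)\cdot t - t\equiv 0$ is the single identity $\sum_{k=1}^{p}\binom{p}{k}u_k\equiv 0$, where $u_k$ denotes (the permutation class of) $t$ with $k$ of its $a_i$-entries replaced by $a_j$. Note that $t=u_0$ cancels out, so this identity says nothing about $t$ itself; your sign-matrix argument kills the odd-$k$ terms, but for $p\geq 4$ you are left with one relation among the several unknown classes $u_2,u_4,\dots$, and the induction you sketch does not close.

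The fix --- and this is exactly what the paper does in its Cases 2--4 --- is to apply the transvection not to $t$ but to an auxiliary basic tensor that is \emph{already known to vanish} because the fresh index $g$ occurs in it exactly once (odd total count at slot $g$, hence killed by $D_g(-1)$): for instance $a_{g}\otimes(\otimes^{p_1-1}a_1)\otimes R$. Under the transvection sending $a_g\mapsto a_g+a_1$ the single $a_g$-factor contributes exactly two terms, one being the auxiliary tensor itself (already zero) and the other the target $(\otimes^{p_1}a_1)\otimes R$; in the later cases, where $b_1$ also occurs and is sent to $b_1-b_g$, the extra cross-terms are not projected away but are killed one by one using the previously established vanishings. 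This yields directly both the ``moreover'' clause (which in any case needs the unbalanced configurations $\#a_i=0<\#b_i$ and $\#a_i=1<\#b_i$ treated separately before the general reduction) and the reduction of a balanced pair $p_i=q_i>1$ to $p_i-1=q_i-1$ at the cost of introducing $a_g\otimes b_g$. With the rewriting step repaired in this way, your two-lemma structure and your use of $n<g$ to produce the fresh index go through as in the paper.
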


\begin{proof}

The basic tensors form a basis  for  $(\otimes^{2n} H_\mathbb{Q})$ and hence their images certainly generate $(\otimes^{2n} H_\mathbb{Q})_{GL_g(\mathbb{Z})}$. The canonical action of the symmetric group $\mathfrak{S}_{k}$ by permuting the factors of $\otimes^kH_\mathbb{Q}$ commutes with the $GL_g(\mathbb{Z})$ action and hence induces an action on the coinvariants quotient $(\otimes^{2n} H_\mathbb{Q})_{GL_g(\mathbb{Z})}$. We will call any two elements that are related by such a permutation \emph{permutation equivalent}. In particular, any basic tensor is permutation equivalent to an ordered basic tensor, of the form
\[
(\otimes^{p_1}a_1)\otimes (\otimes^{q_1}b_1)\otimes \cdots \otimes (\otimes^{p_g}a_g)\otimes (\otimes^{q_g}b_g),
\]
where $p_i,q_i$ are non-negative integers with $\sum_{i=1}^{g}(p_i+q_i)=2n$.


\begin{itemize}	
\item[\textbf{Case 1.}] We claim that any basic tensor that is permutation equivalent to a basic ordered tensor for which for some $1 \leq j \leq g$, $p_j+q_j$ is odd has a trivial image in $(\otimes^{2n} H_\mathbb{Q})_{GL_g(\mathbb{Z})}$. Indeed, the elementary matrix $D_j(-1) \in GL_g(\mathbb{Z})$, which is the diagonal matrix with all entries equal to $1$ but the $j^{th}$ which is equal to $-1$, acts on the basic tensor by multiplication by $(-1)^{p_j+q_j}=-1$, hence in the coinvariants this element has to be $0$.
\end{itemize}

Because $n<g$ and $\sum_{i=1}^{g}(p_i+q_i)=2n$, the fact that $p_i + q_i$ is even imposes that for some $1 \leq i \leq g$ $p_i+q_i =0$. Up to a permutation of the indices, we may assume that $i=g$. To make the notation lighter in all what follows we set $R :=\bigotimes_{i=2}^{g-1}((\otimes^{p_i}a_i)\otimes (\otimes^{q_i}b_i))$, so any basic tensor is permutation equivalent to
\[
(\otimes^{p_1}a_1)\otimes (\otimes^{q_1}b_1)\otimes R.
\]

\begin{itemize}
\item[\textbf{Case 2.}] We claim that the basic tensors permutation equivalent to one for which   $\lbrace p_i=0,\;q_i> 0\rbrace$ or $\lbrace q_i=0,\; p_i> 0\rbrace$ for some $i$ are sent to $0$ in  $(\otimes^{2n} H_\mathbb{Q})_{GL_g(\mathbb{Z})}$. We prove this for $q_1=0,\; p_1>0$, so for a basic tensor permutation equivalent to $(\otimes^{p_1}a_1)\otimes R$, the other cases are analogous.

Consider the basic tensor
\[
a_{g}\otimes(\otimes^{p_1-1}a_1)\otimes R,
\]
and act on it by the elementary matrix $Id + E_{1,g} = L_{1,g}(1)\in GL_g(\mathbb{Z})$. By Case~1, the element is trivial in  $(\otimes^{2n} H_\mathbb{Q})_{GL_g(\mathbb{Z})}$, so in this quotient we have
 \begin{align*}
 	0 &= a_{g}\otimes(\otimes^{p_1-1}a_1)\otimes R \\
 	& = L_{1,g}(1) (a_{g}\otimes(\otimes^{p_1-1}a_1)\otimes R) \\
 	& = (a_g + a_1)\otimes(\otimes^{p_1-1}a_1)\otimes R \\
 	& = a_g\otimes(\otimes^{p_1-1}a_1)\otimes R + a_1^{p_1} \otimes R \\
 	& = a_1^{p_1} \otimes R.
 \end{align*}

\item[\textbf{Case 3.}] We claim that the basic tensors that are permutation equivalent to one for which $\lbrace p_i=1,\;q_i> 1\rbrace$ or $\lbrace q_i=1,\; p_i>1\rbrace$ for some index $i$, are sent to $0$ in  $(\otimes^{2n} H_\mathbb{Q})_{GL_g(\mathbb{Z})}$. We prove this for $p_1=1<q_1$, so for basic tensors permutation equivalent to $a_1\otimes (\otimes^{q_1}b_1)\otimes R$, the other cases are analogous. 

We act on the basic tensor
\[
a_{g} \otimes(\otimes^{q_1-1}b_1)\otimes b_{g}\otimes R
\] 
by the elementary matrix $L_{1,g}(-1) = Id -E_{1,g}$. By Step 2, in  $(\otimes^{2n} H_\mathbb{Q})_{GL_g(\mathbb{Z})}$ this element is $0$, hence in the coinvariants quotient:
\[
\begin{aligned}
	0 & =a_{g} \otimes(\otimes^{q_1-1}b_1)\otimes b_{g}\otimes R  \\
	& = L_{1,g}(-1)(a_{g} \otimes(\otimes^{q_1-1}b_1)\otimes b_{g}\otimes R) \\
	& = (a_{g}-a_1) \otimes(\otimes^{q_1-1}(b_1+b_g)\otimes b_{g}) \otimes R \\
	& = a_g \otimes (\otimes^{q_1-1} (b_1 +b_g)\otimes  b_g)\otimes R -a_1  \otimes (\otimes^{q_1-1} (b_1+b_g)\otimes  b_g) \otimes R
\end{aligned}
\]
As we expand the second summand by linearity,
we will get basic tensors with at least one $b_g$ but no $a_g$,
hence by Case 2, all these elements are $0$ in the coinvariants quotient. We now expand the first summand by linearity; we  get basic tensors with $(q_1-1-k)$ basis elements with $b_1$ but no $a_1$ and $k+1$ basis elements $b_g$ for $0 \leq k \leq q_1-1$. By Case 2, unless $q_1-1=k$, all this basic tensors are $0$ in the coinvariants quotient. Hence all that remains in the last line is $a_{1} \otimes(\otimes^{q_1}b_{1})\otimes R$, as we wanted.\\

\item[\textbf{Case 4.}] We claim that the basic tensors that are permutation equivalent to one for which  $1<p_i,q_i$ are  equivalent to a sum of elements each one permutation equivalent to
\[
(\otimes^{p_1-1}a_1)\otimes (\otimes^{q_1-1}b_1)\otimes R \otimes a_{g}\otimes b_{g}.
\]

Consider the element
\[
a_{g}\otimes(\otimes^{p_1-1}a_1)\otimes (\otimes^{q_1}b_1)\otimes R,
\]
which is $0$ in $(\otimes^{2n} H_\mathbb{Q})_{GL_g(\mathbb{Z})}$ by Step $1$. We act on it by $L_{g,1}(-1)$, to get
\begin{align*}
	0  = &a_{g}\otimes(\otimes^{p_1-1}a_1)\otimes (\otimes^{q_1}b_1)\otimes R \\
	 = &L_{g,1}(-1)(a_{g}\otimes(\otimes^{p_1-1}a_1)\otimes (\otimes^{q_1}b_1)\otimes R) \\
	 = &(a_g-a_1)\otimes(\otimes^{p_1-1}a_1)\otimes (\otimes^{q_1}(b_1+b_g))\otimes R \\
	 =& a_g\otimes(\otimes^{p_1-1}a_1)\otimes (\otimes^{q_1}(b_1+b_g))\otimes R \\ & -(\otimes^{p_1}a_1)\otimes (\otimes^{q_1}(b_1+b_g))\otimes R.
\end{align*}
As we expand the first summand from the last line we get elements that are permutation equivalent to  $a_g \otimes(\otimes^k b_g) \otimes (\otimes^{p_1-1}a_1)\otimes (\otimes^{q_1-k}b_1) \otimes R$, for $0 \leq k \leq q_1$.  By Case 3, all these elements are $0$ but  for $k=1$, and there are $q_1$ elements of this form.

In the expansion of the second summand we get basic tensors permutation equivalent to $(\otimes^{p_1}a_1)\otimes (\otimes^{q_1-k}b_1)\otimes(\otimes^k b_g))\otimes R$ for $0 \leq k \leq q_1$. By Case 2 all these elements are $0$ but for  $k=0$.

In the end any basic tensor that is permutation equivalent to $(\otimes^{p_1}a_1)\otimes (\otimes^{q_1}b_1)\otimes R$ is  equal in $(\otimes^{2n} H_\mathbb{Q})_{GL_g(\mathbb{Z})}$ to the sum $q_1$ basic tensors each one permutation equivalent to  $(\otimes^{p_1-1}a_1)\otimes (\otimes^{q_1-1}b_1)\otimes R \otimes a_{g}\otimes b_{g}$.

\end{itemize}

Applying recursively Case 4, we conclude firstly, that the basic tensors that are permutation equivalent to  $\bigotimes_{i=1}^s((\otimes^{p_i}a_i)\otimes (\otimes^{q_i}b_i))$ with $1<p_i<q_i$ or $1<q_i<p_i$ for some index $i$ are also zero in the coinvariants module $(\otimes^{2n} H_\mathbb{Q})_{GL_g(\mathbb{Z})}$  and secondly that the family of basic tensors that are permutation equivalent to  
\[
a_1\otimes b_1 \otimes a_2 \otimes b_2\otimes \cdots \otimes a_n\otimes b_n.
\]
form a generating set of $(\otimes^{2n} H_\mathbb{Q})_{GL_g(\mathbb{Z})}$.

\end{proof}

Let $\mathcal{A}_\ast(H_\mathbb{Q})$ be the Lie algebra $\mathcal{A}_\ast(H_\mathbb{Q})$ of uni-trivalent trees with cyclically oriented inner vertices that are labeled by elements of $H_\mathbb{Q}$ modulo linearity in the labels and  the so-called IHX and  AS relations introduced by Garoufalidis and Ohtsuki~\cite{MR1489202}. This algebra gives a convenient description of the target of the Johnson homomorphisms. Moreover by Sakasai~\cite{saka}, the Johnson homomorphisms induce isomorphisms in low degrees,   $\mathfrak{m}_{g,1}(1)\simeq \mathcal{A}_1(H_\mathbb{Q})$ and $\mathfrak{m}_{g,1}(2)\simeq \mathcal{A}_2(H_\mathbb{Q})$. In more concrete terms, an element  $ a \wedge b \wedge c \in \mathfrak{m}_{g,1}(1) \simeq \Lambda^3H_\mathbb{Q}$ corresponds to the tree:
\[
\Ytree{a}{b}{c},
\]
and a typical element in $\mathcal{M}_{g,1}(2)$ is a sum of $H$-shaped trees:
\[
\tree{a}{b}{c}{d}.
\] 

Let $S^2(\Lambda^2 H_\mathbb{Q})$ denote the second symmetric power of $\Lambda^2 H_\mathbb{Q}$; notice that the vector space $\Lambda^4 H_\mathbb{Q}$ can be embedded in $S^2(\Lambda^2 H_\mathbb{Q})$ by sending $a\wedge b\wedge c \wedge d$ to
$(a\wedge b)(c \wedge d)-(a\wedge c)(b \wedge d)+(a\wedge d)(b \wedge c)$.  The final link that gives the appropriate context to  apply the representation theory we described is provided by

\begin{proposition}[Proposition 3.1 in \cite{MB}]
\label{prop:tree-deriv1}
There is an isomorphism of $GL_g(\mathbb{Z})$-modules:
\[
\xymatrix@C=10mm@R=10mm{\mathcal{A}_2(H_\mathbb{Q})\ar@{->}[r]^-{\sim} & \dfrac{S^2(\Lambda^2 H_\mathbb{Q})}{\Lambda^4 H_\mathbb{Q}}},
\]
which sends the tree $\tree{a}{b}{c}{d}$ to the element $(a\wedge b)(c \wedge d)$. 
In particular for $g \geq 3$,  the co-invariants quotient $\mathfrak{m}_{g,1}(2)_{GL_g(\mathbb{Z})} \simeq \mathcal{A}_2(H_\mathbb{Q})_{GL_g(\mathbb{Z})}$ is generated  by (the class of) any two of the trees
\[
\tree{a_1}{b_1}{b_2}{a_2}, \quad \tree{a_1}{a_2}{b_2}{b_1}. \quad \text{and}\quad \tree{a_1}{b_2}{a_2}{b_1}.
\]
\end{proposition}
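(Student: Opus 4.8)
The plan is to prove the isomorphism $\mathcal{A}_2(H_\mathbb{Q}) \cong S^2(\Lambda^2 H_\mathbb{Q})/\Lambda^4 H_\mathbb{Q}$ by exhibiting the explicit assignment $\tree{a}{b}{c}{d} \mapsto (a\wedge b)(c\wedge d)$ as a well-defined $GL_g(\mathbb{Z})$-equivariant map and checking it respects all defining relations. First I would recall that an $H$-shaped tree has two inner (trivalent) vertices, each carrying a cyclic order, and two pairs of labels: the pair $\{a,b\}$ at one inner vertex together with the internal edge, and the pair $\{c,d\}$ at the other. The AS relation at an inner vertex says that swapping two of the three cyclic neighbours introduces a sign; on the tree this means $\tree{a}{b}{c}{d} = -\tree{b}{a}{c}{d} = -\tree{a}{b}{d}{c}$, which matches the antisymmetry of $a\wedge b$ and of $c\wedge d$ on the right-hand side. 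Linearity in the labels is immediate on both sides. So the assignment descends to a well-defined linear map $\mathcal{A}_2(H_\mathbb{Q}) \to S^2(\Lambda^2 H_\mathbb{Q})$, and it is manifestly $GL_g(\mathbb{Z})$-equivariant (indeed $GL_g(\mathbb{Q})$-, even $Sp$-equivariant) since $GL_g$ acts diagonally on labels on both sides.

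Next I would handle the IHX relation, which is the only subtle point. In degree $2$, a single IHX relation among $H$-trees, after resolving, produces a linear combination of three $H$-trees whose image on the right-hand side must be shown to lie in the subspace $\Lambda^4 H_\mathbb{Q} \subseteq S^2(\Lambda^2 H_\mathbb{Q})$, embedded via $a\wedge b\wedge c\wedge d \mapsto (a\wedge b)(c\wedge d) - (a\wedge c)(b\wedge d) + (a\wedge d)(b\wedge c)$. Concretely, the IHX relation on the labels $a,b,c,d$ (with the two internal half-edges) yields precisely $\tree{a}{b}{c}{d} - \tree{a}{c}{b}{d} + \tree{a}{d}{b}{c}$ type combination (up to signs fixed by the chosen cyclic orders), whose image is exactly the generator of $\Lambda^4 H_\mathbb{Q}$ displayed above. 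Hence the map $\mathcal{A}_2(H_\mathbb{Q}) \to S^2(\Lambda^2 H_\mathbb{Q})$ has image containing, and in fact equal to, a complement of $\Lambda^4 H_\mathbb{Q}$, and it kills exactly $\Lambda^4 H_\mathbb{Q}$; passing to the quotient gives the claimed isomorphism. (For the bijectivity I would invoke, or re-derive, Sakasai's identification $\mathfrak{m}_{g,1}(2) \simeq \mathcal{A}_2(H_\mathbb{Q})$ together with a dimension count, or simply cite \cite{MB} for the isomorphism and only verify the explicit formula, which is what the statement really needs here.)

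For the final sentence of the statement, about the coinvariants, I would combine this isomorphism with Proposition~\ref{prop:chords}. Since the isomorphism is $GL_g(\mathbb{Z})$-equivariant, it induces $\mathcal{A}_2(H_\mathbb{Q})_{GL_g(\mathbb{Z})} \simeq \bigl(S^2(\Lambda^2 H_\mathbb{Q})/\Lambda^4 H_\mathbb{Q}\bigr)_{GL_g(\mathbb{Z})}$, and $S^2(\Lambda^2 H_\mathbb{Q})$ is a quotient of $\otimes^4 H_\mathbb{Q}$, so by Proposition~\ref{prop:chords} (applied with $n=2 < g$, i.e. $g \geq 3$) its coinvariants are spanned by the images of basic tensors in which each pair $\{a_i,b_i\}$ occurs once or not at all; up to permutation and the symmetrisations defining $S^2(\Lambda^2 H_\mathbb{Q})$ these reduce to the three trees $\tree{a_1}{b_1}{b_2}{a_2}$, $\tree{a_1}{a_2}{b_2}{b_1}$, $\tree{a_1}{b_2}{a_2}{b_1}$. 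Finally, a short computation in the coinvariants — using that one may apply a permutation of the two indices $1,2$ and that $GL_g(\mathbb{Z})$ acts, so that the relation coming from $\Lambda^4 H_\mathbb{Q}$, namely $(a_1\wedge b_1)(a_2\wedge b_2) - (a_1\wedge a_2)(b_1\wedge b_2) + (a_1\wedge b_2)(a_2\wedge b_1) \equiv 0$, ties the three classes together — shows that any two of them generate. I expect the main obstacle to be bookkeeping the cyclic orientations at the inner vertices so that the signs in the AS and IHX relations on trees line up exactly with the signs in $\Lambda^4 H_\mathbb{Q} \hookrightarrow S^2(\Lambda^2 H_\mathbb{Q})$; once the sign conventions are pinned down the rest is formal.
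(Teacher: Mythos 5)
Your proposal is correct and follows essentially the same route as the paper: the isomorphism itself is taken from \cite{MB}, and the generator statement is obtained by combining Proposition~\ref{prop:chords} with the symmetries of the $H$-shaped trees and the IHX relation, which is exactly the relation coming from killing $\Lambda^4 H_\mathbb{Q}$ in $S^2(\Lambda^2 H_\mathbb{Q})$. The only blemish is the sign of your third term in the image of $a_1\wedge b_1\wedge a_2\wedge b_2$, which should read $+(a_1\wedge b_2)(b_1\wedge a_2)$; since the relation still involves all three trees with coefficients $\pm 1$, this does not affect the conclusion that any two of them generate the coinvariants.
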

\begin{proof}
	The only part that is not covered by \cite[Proposition 3.1]{MB} is the description of the generators. The first part shows that $\mathcal{A}_2(H_\mathbb{Q})$ is a quotient of $\otimes^4H_\mathbb{Q}$, and hence the coinvariants quotient is generated by the trees that are images of basic elements permutation equivalent to  $a_1 \otimes b_1 \otimes a_2 \otimes b_2$. The symmetries of the trees imply that we can always assume that the top left corner is in fact $a_1$, and then the remaining $6$ trees are pairwise identified up to a non trivial scalar:
	\begin{align*}
		\tree{a_1}{b_1}{b_2}{a_2} & = - \tree{a_1}{b_1}{a_2}{b_2}, \\ 
	 	\tree{a_1}{a_2}{b_2}{b_1} & = - \tree{a_1}{a_2}{b_1}{b_2}, \\
	 	 \tree{a_1}{b_2}{a_2}{b_1} & = -\tree{a_1}{b_2}{b_1}{a_2}.
	\end{align*}
finally, by the IHX relation we have, 
\[
\tree{a_1}{b_1}{b_2}{a_2} = \tree{a_1}{a_2}{b_1}{b_2} - \tree{a_1}{b_2}{b_1}{a_2}
\]
\end{proof}

\subsection{The handlebody groups in $\mathfrak{m}_{g,1}(2)$}\label{sec:Lagtraces}

To control the behavior of an invariant when restricted to subgroups in the Johnson filtration it is essential to have a good understanding of the images of the groups $\mathcal{A}_{g,1}(k)$, $\mathcal{B}_{g,1}(k)$ and $\mathcal{AB}_{g,1}(k)$. In our case, for $k=2$, this is provided by the Lagrangian traces introduced by Faes~\cite{faes1}, which we now  briefly explain.
By expanding any vector with respect to our preferred  basis for $H_\mathbb{Q}$, we find that the elements in $\mathcal{A}_2(H_\mathbb{Q})$ are sums of elements whose $4$ labels are elements in the basis $a_i,b_i$.
Given $k,l$ non-negative integers with $k+l=4,$ we denote by $W(a^kb^l)$ the subvector space of $\mathfrak{m}_{g,1}(2)$ generated by those trees with $a$'s in $k$ labels and $b$'s in $l$ labels. In~\cite[Lemma 4.1 and Proposition 3.8]{faes1}, Q. Faes showed that  $\tau_2(\mathcal{A}_{g,1}(2)) \subseteq W(a^4)\oplus W(a^3b)\oplus W(a^2b^2)\oplus W(ab^3)$. Moreover, on this direct sum $W(a^{\geq 1}b)$, he defined 
a trace-like operator called $Tr^A: W(a^{\geq 1}b) \rightarrow S^2(B)$, 
and  proved in  \cite[Theorem 5.1]{faes1} that
\[
\tau_2(\mathcal{A}_{g,1}(2))=Ker(Tr^A)\cap Im(\tau_2).
\]
There is an analogous trace-like $Tr^B: W(ab^{\geq 1}) \rightarrow S^2(A)$ for which the same proof shows that
\[
\tau_2(\mathcal{B}_{g,1}(2))=Ker(Tr^B)\cap Im(\tau_2).
\]
The trace $Tr^A$ (and analogously $Tr^B$) can be explicitly computed on trees (\cite[Example 4.5]{faes1})  as follows. Let $a \in A$, $c,d,e \in H$, and denote by $p_B$ the projection on $B$ parallel to $A$, then
\[
Tr^A\Big(\tree{a}{c}{d}{e}\Big) = \omega(a,e)p_B(d)p_B(c) - \omega(a,d)p_B(e)p_B(c).
\]
And exchanging the roles of $A$ and $B$, for $b \in B$, $c,d,e \in H$
\[
Tr^B\Big(\tree{b}{c}{d}{e}\Big) = \omega(b,e)p_A(d)p_A(c) - \omega(b,d)p_A(e)p_A(c).
\]
Moreover by \cite[Corollary 6.5]{faes1} we have that
\[
\tau_2(\mathcal{AB}_{g,1}(2))=\tau_2(\mathcal{A}_{g,1}(2))\cap \tau_2(\mathcal{B}_{g,1}(2)).
\]
From this facts,  an easy computation shows, setting  $W_0(ab^3)=Ker(Tr^A)\cap W(ab^3)$ and $W_0(a^3b)=Ker(Tr^B)\cap W(a^3b)$, that
\begin{align*}
	\tau_2(\mathcal{A}_{g,1}(2)) \otimes \mathbb{Q} & =W(a^4) \oplus  W(a^3b) \oplus W(a^2b^2)\oplus W_0(ab^3), \\
		\tau_2(\mathcal{B}_{g,1}(2)) \otimes \mathbb{Q} & =  W_0(a^3b) \oplus W(a^2b^2)\oplus W(ab^3) \oplus W(b^4),\\
\tau_2(\mathcal{AB}_{g,1}(2)) \otimes  \mathbb{Q} & =  W_0(a^3b) \oplus W(a^2b^2)\oplus W_0(ab^3).
\end{align*}

\section{Invariants and their trivialized 2-cocycles}\label{sec:Invandcocycles}
\label{sec:InvTrivcoc}

Let us go back to  the  bijection 
\begin{equation}
	\label{bij_S3_2}
	\begin{tikzcd}
		\displaystyle\lim_{g\to \infty}(\mathcal{A}_{g,1}(2)\backslash\mathcal{M}_{g,1}(2)/\mathcal{B}_{g,1}(2))_{\mathcal{AB}_{g,1}} \ar[r,"\sim"] & \mathcal{S}.
	\end{tikzcd}
\end{equation}
 Let $F: \mathcal{S} \longrightarrow \mathbb{Q}$ be a rational valued invariant of integral homology spheres, and assume without loss of generality that $F$ is normalized, i.e., $F(\mathbb{S}^3) =0$. The invariant $F$ determines and is determined by the sequence of maps for $g \geq 1$, 
 \[
 	\begin{tikzcd}
 F_g: \mathcal{M}_{g,1}(k) \ar[r] & 	\displaystyle\lim_{g\to \infty}(\mathcal{A}_{g,1}(k)\backslash\mathcal{M}_{g,1}(k)/\mathcal{B}_{g,1}(k))_{\mathcal{AB}_{g,1}} \ar[r,"\sim"] & \mathcal{S} \ar[r, "F"] & \mathbb{Q}.
 \end{tikzcd}
 \]
 
 Because $F$ is an invariant,  the maps $F_g$ satisfy the following conditions:

 \begin{enumerate}[$i)$]
 	\item Stability: $F_{g+1}(\phi)=F_g(\phi) \quad \text{for every } \phi\in \mathcal{M}_{g,1}(2)$,
 	\item Double class condition: $F_g(\xi_a \phi \xi_b)=F_g(\phi) \quad \text{for every } \phi\in \mathcal{M}_{g,1}(2),\;\xi_a\in \mathcal{A}_{g,1}(2),\;\xi_b\in \mathcal{B}_{g,1}(2)$,
 	\item Conjugacy Invariance: $F_g(\mu \phi \mu^{-1})=F_g(\phi)  \quad \text{for every }  \phi \in \mathcal{M}_{g,1}(2), \; \mu\in \mathcal{AB}_{g,1}$.
 \end{enumerate}

Because the stabilization map $\mathcal{M}_{g,1} \rightarrow \mathcal{M}_{g+1,1}$ is injective, property $i)$ shows that to control $F$ it is enough to control $F_g$ for $g$ large enough.

In order to actually compute an invariant, a very useful tool is a surgery formula, the prototypical one being that of the Casson invariant. Surgery formulas have an algebraic counter part, the trivialized $2$-cocycles associated to an invariant. To our sequence of maps $(F_g)_{g \geq 1}$, with domain $\mathcal{M}_{g,1}(2)$,  we associate the sequence of maps:
 \begin{align*}
 	C_g: \mathcal{M}_{g,1}(2)\times \mathcal{M}_{g,1}(2) & \longrightarrow \mathbb{Q}, \\
 	(\phi,\psi) & \longmapsto F_g(\phi)+F_g(\psi)-F_g(\phi\psi).
 \end{align*}
 This is a  family of trivialized $2$-cocycles and from the invariant $F$ they inherit the following properties:
 \begin{enumerate}[(1)]
 	\item Stability:  $C_{g+1}|_{\mathcal{M}_{g,1}(2) \times \mathcal{M}_{g,1}(2)} = C_g$, 
 	\item Double class condition: If $\phi\in \mathcal{A}_{g,1}(2)$ or $\psi \in \mathcal{B}_{g,1}(2)$ then $C_g(\phi, \psi)=0$, 
 	\item Conjugacy Invariance: For all $\mu \in \mathcal{AB}_{g,1}$ and for all $\phi,\psi \in \mathcal{M}_{g,1}(2)$,
 	\[
 	C_g(\mu\phi\mu^{-1},\mu\psi\mu^{-1}) = C_g(\phi,\psi).
 	\]
 \end{enumerate}
  We  want to introduce into this picture the finite type condition. Unfortunately we do not know how to characterize this condition purely in group theoretical terms. We will hence explore the consequences of   conditions stated in Proposition~\ref{prop:coc-finite-type}. Fix an integer $k$, the degree of the invariant you are interested into, then the 2-cocycle $C_g$ satisfies:
 
 \begin{enumerate}
 	\item[(4)] Weak finite type $k$. The $2$-cocycles $\{C_g\}_g$ are zero on $\sqrt{\mathcal{T}_{g,1}(k+1)}\times \sqrt{\mathcal{T}_{g,1}(k)}$ and $\sqrt{\mathcal{T}_{g,1}(k)}\times \sqrt{\mathcal{T}_{g,1}(k+1)}$.
 \end{enumerate}

The maps
\[\begin{aligned}
& \begin{array}{rcl}
\left\{ \mathbb{Q}-\text{valued degree } k \text{ invariants} \right\} & \longrightarrow& \left\{ \text{series of maps } (F_g)_{g \geq 1} \text{ satisfying } i)\text{ - }iii) \right\}, 
\end{array}
\\
& \begin{array}{rcl}
\left\{ \mathbb{Q}-\text{valued degree } k \text{ invariants} \right\} & \longrightarrow& \left\{
\begin{array}{c}
\text{series of $2$-cocycles } (C_g)_{g \geq 1}  \\
\text{ satisfying } (1)\text{-}(4) \end{array}
\right\}
\end{array}
\end{aligned}
\]
are clearly linear. The kernel of the second map are those invariants whose associated functions on $\mathcal{M}_{g,1}(2)$  are homomorphisms. By a result of Morita~\cite{mor1} the Casson invariant is such an invariant, and it turns to be essentially the only one.

\begin{theorem}\label{thm:CassonHomomorphInv}
Up to a multiplicative constant, the Casson invariant is the unique $\mathbb{Q}$-valued invariant whose associated functions on $\mathcal{M}_{g,1}(2)$ are homomorphisms.
\end{theorem}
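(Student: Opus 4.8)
## Proof strategy

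The plan is to show that if $F$ is a $\mathbb{Q}$-valued normalized invariant whose associated functions $F_g$ are homomorphisms on $\mathcal{M}_{g,1}(2)$, then $F$ is determined by its value on a single generator of the coinvariants of $\mathfrak{m}_{g,1}(2)$, and that this single degree of freedom is realized by the Casson invariant. First I would observe that since each $F_g \colon \mathcal{M}_{g,1}(2) \to \mathbb{Q}$ is a homomorphism into an abelian torsion-free group, it factors through $H_1(\mathcal{M}_{g,1}(2);\mathbb{Q}) = \mathfrak{m}_{g,1}(2)^{\mathrm{ab}} \otimes \mathbb{Q}$, and in fact through $\mathfrak{m}_{g,1}(2)/[\mathfrak{m}_{g,1}, \mathfrak{m}_{g,1}]$-type quotients; more usefully, the conjugacy invariance property $iii)$ forces $F_g$ to factor through the $GL_g(\mathbb{Z})$-coinvariants. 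By Proposition~\ref{prop:tree-deriv1}, for $g \geq 3$ the space $\mathfrak{m}_{g,1}(2)_{GL_g(\mathbb{Z})}$ is spanned by (the classes of) the three $H$-shaped trees $\tree{a_1}{b_1}{b_2}{a_2}$, $\tree{a_1}{a_2}{b_2}{b_1}$, $\tree{a_1}{b_2}{a_2}{b_1}$, any two of which already generate; so the space of homomorphisms on $\mathcal{M}_{g,1}(2)$ satisfying $iii)$ is at most $2$-dimensional before imposing the double-class condition $ii)$.

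Next I would impose the double-class condition. Using the computation of $\tau_2(\mathcal{A}_{g,1}(2)) \otimes \mathbb{Q} = W(a^4)\oplus W(a^3b) \oplus W(a^2b^2) \oplus W_0(ab^3)$ and $\tau_2(\mathcal{B}_{g,1}(2))\otimes\mathbb{Q} = W_0(a^3b)\oplus W(a^2b^2)\oplus W(ab^3)\oplus W(b^4)$ from Section~\ref{sec:Lagtraces}, the hypothesis that $F_g$ vanishes on $\mathcal{A}_{g,1}(2)$ and on $\mathcal{B}_{g,1}(2)$ kills most of the weight pieces: a homomorphism vanishing on both handlebody subgroups must vanish on $W(a^4), W(a^3b), W(ab^3), W(b^4)$ and on $W(a^2b^2)$, leaving only the part not hit by either, which after passing to coinvariants is one-dimensional. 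Concretely I expect that the class $\tree{a_1}{b_1}{b_2}{a_2}$ (equivalently the "genuinely mixed $a^2b^2$" generator modulo the handlebody pieces) is the surviving generator, so the space of invariants in the kernel of the second map is at most one-dimensional.

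Finally I would exhibit the Casson invariant as a nonzero element of this kernel. By Morita's result~\cite{mor1}, the restriction of $\lambda$ to $\mathcal{M}_{g,1}(2)$ is a homomorphism (indeed $\lambda|_{\mathcal{T}_{g,1}(2)}$ is the "core" of the Casson homomorphism, extended to $\mathcal{M}_{g,1}(2)$), and by Corollary~\ref{cor:ker_d1} this homomorphism is non-trivial for $g$ large. Since $\lambda$ manifestly satisfies $i)$, $ii)$, $iii)$ (it is an invariant of the closed manifold $\mathbb{S}^3_\phi$), it lies in the kernel of the second map and is nonzero there; combined with the upper bound of dimension $\leq 1$ this proves uniqueness up to scalar. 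The main obstacle I anticipate is the bookkeeping in the double-class step: one must be careful that vanishing on $\mathcal{A}_{g,1}(2)$ and $\mathcal{B}_{g,1}(2)$ as \emph{subgroups} of $\mathcal{M}_{g,1}(2)$ (not merely on their images under $\tau_2$) really does cut the coinvariant space down to dimension one, which requires knowing that $\tau_2$ is injective on $\mathfrak{m}_{g,1}(2)$ and tracking how the three tree-generators distribute among the weight spaces $W(a^kb^l)$ and their traceless subspaces $W_0$. Verifying that the surviving generator is not accidentally also killed — i.e. that the bound is exactly $1$ and not $0$ — is then automatic once Casson is placed in the kernel.
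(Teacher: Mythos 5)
There is a genuine gap, and it sits at the very first step of your argument. You identify $H_1(\mathcal{M}_{g,1}(2);\mathbb{Q})$ with $\mathfrak{m}_{g,1}(2)=\mathcal{M}_{g,1}(2)/\mathcal{M}_{g,1}(3)\otimes\mathbb{Q}$ and then bound the space of admissible homomorphisms by the coinvariants $\mathfrak{m}_{g,1}(2)_{GL_g(\mathbb{Z})}$. But $[\mathcal{M}_{g,1}(2),\mathcal{M}_{g,1}(2)]$ is much smaller than $\mathcal{M}_{g,1}(3)$: by the Faes--Massuyeau computation (Proposition~\ref{prop:ab_M(2)/T(5)}) one has $H_1(\mathcal{M}_{g,1}(2);\mathbb{Q})\simeq\mathbb{Q}\oplus\mathfrak{m}_{g,1}(2)\oplus\mathfrak{m}_{g,1}(3)$, and after taking $\mathcal{AB}_{g,1}$-coinvariants (Proposition~\ref{prop_ab_K/T}) this becomes $\mathbb{Q}\oplus\mathfrak{m}_{g,1}(2)_{GL_g(\mathbb{Z})}$. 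The extra $\mathbb{Q}$ summand, detected by $\lambda$ via $\mathcal{M}_{g,1}(4)/[\mathcal{M}_{g,1}(2),\mathcal{M}_{g,1}(2)]\otimes\mathbb{Q}$, is invisible in your analysis, so your dimension count is not justified: a homomorphism on $\mathcal{M}_{g,1}(2)$ has no reason to factor through $\tau_2$, and establishing exactly how far it can fail to do so is the real content of the theorem.

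This error then propagates and makes the second half of your argument collapse. The Casson invariant does \emph{not} factor through $\tau_2$ --- it is nonzero on $\mathcal{M}_{g,1}(3)=\ker\tau_2$, indeed on every term of the Johnson filtration --- so it cannot be realized as a linear form on a ``surviving generator'' of $\mathfrak{m}_{g,1}(2)_{GL_g(\mathbb{Z})}$. In fact no such surviving generator exists: since $\tau_2(\mathcal{A}_{g,1}(2))\otimes\mathbb{Q}$ and $\tau_2(\mathcal{B}_{g,1}(2))\otimes\mathbb{Q}$ together span $W(a^4)\oplus W(a^3b)\oplus W(a^2b^2)\oplus W(ab^3)\oplus W(b^4)$, i.e.\ all of $\tau_2(\mathcal{M}_{g,1}(2))\otimes\mathbb{Q}$ (equivalently, by Proposition~\ref{prop:genm2glinv} the coinvariants $\mathfrak{m}_{g,1}(2)_{GL_g(\mathbb{Z})}$ are generated by elements of $\tau_2(\mathcal{AB}_{g,1}(2))$), the double-class condition annihilates the \emph{whole} of $\mathfrak{m}_{g,1}(2)_{GL_g(\mathbb{Z})}$, not all but one dimension of it. Your own bookkeeping already says as much (``must vanish on $W(a^4)$, $W(a^3b)$, $W(ab^3)$, $W(b^4)$ and on $W(a^2b^2)$'') and then contradicts itself by positing a one-dimensional remnant. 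The correct structure of the proof is the opposite of what you describe: the $\tau_2$-part contributes nothing, and the unique surviving direction is the extra $\mathbb{Q}$ summand of $H_1(\mathcal{M}_{g,1}(2);\mathbb{Q})_{\mathcal{AB}_{g,1}}$, which is precisely where $\lambda$ lives.
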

 
 The key to prove this statement is the computation of
 \[
 Hom(\mathcal{M}_{g,1}(2);\mathbb{Q})^{\mathcal{AB}_{g,1}}  \simeq Hom(H_1\left(\mathcal{M}_{g,1}(2);\mathbb{Q}\right)_{\mathcal{AB}_{g,1}};\mathbb{Q}),
 \]
and it starts from the following result of Massuyeau-Faes~\cite{MF}. As we will need a slightly different description as the one stated in their work for our computations we will sketch the proof of the result.

 \begin{proposition}\cite[Theorem 3.2]{MF}\label{prop:ab_M(2)/T(5)}
 	For $g \geq 6$, there is an isomorphism of $\mathbb{Q}$-modules:
 	\[
 	H_1(\mathcal{M}_{g,1}(2);\mathbb{Q})  \simeq \mathbb{Q}\oplus \mathfrak{m}_{g,1}(2)\oplus \mathfrak{m}_{g,1}(3),
 	\]
 	where only the submodule $\mathbb{Q}$ and the quotient $\mathfrak{m}_{g,1}(2)$ are canonical.
 \end{proposition}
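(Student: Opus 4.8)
\emph{Set-up and reduction.} Work over $\mathbb{Q}$ throughout, write $M(k)=\mathcal{M}_{g,1}(k)$, and recall that $H_1(M(2);\mathbb{Q})=M(2)^{\mathrm{ab}}\otimes\mathbb{Q}$ and $[M(2),M(2)]\subseteq M(4)$. The plan is to compute this abelianization by unwinding the Johnson filtration $M(2)\supseteq M(3)\supseteq M(4)\supseteq\cdots$. The quotient $M(2)/M(3)$ is abelian (since $[M(2),M(2)]\subseteq M(4)\subseteq M(3)$), free of finite rank, and rationally equal to $\mathfrak{m}_{g,1}(2)\cong\mathcal{A}_2(H_\mathbb{Q})$ via $\tau_2$ (Sakasai). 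The Hochschild--Serre five-term sequence of $1\to M(3)\to M(2)\to M(2)/M(3)\to 1$ therefore gives an exact sequence
\[
\Lambda^2\mathfrak{m}_{g,1}(2)\xrightarrow{\ d\ }\bigl(M(3)/[M(2),M(3)]\bigr)\otimes\mathbb{Q}\longrightarrow H_1(M(2);\mathbb{Q})\longrightarrow\mathfrak{m}_{g,1}(2)\longrightarrow 0,
\]
where $d$ sends $\bar\phi\wedge\bar\psi$ to the class of $[\phi,\psi]$; since $[M(2),M(3)]\subseteq[M(2),M(2)]$ one finds $\operatorname{coker}(d)\cong\bigl(M(3)/[M(2),M(2)]\bigr)\otimes\mathbb{Q}$. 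This already exhibits the canonical quotient $\mathfrak{m}_{g,1}(2)$, and reduces the proposition to computing $\bigl(M(3)/[M(2),M(2)]\bigr)\otimes\mathbb{Q}$.

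\emph{The correction term.} Filter $M(3)/[M(2),M(2)]$ by the images of the $M(k)$, $k\ge 3$. The degree-$3$ layer is $M(3)/M(4)\otimes\mathbb{Q}=\mathfrak{m}_{g,1}(3)$, untouched by $[M(2),M(2)]\subseteq M(4)$ but only liftable into $H_1(M(2);\mathbb{Q})$ after a non-canonical choice (there is no Johnson-type homomorphism on $M(2)$ detecting degree $3$), which is the source of the non-canonicity of the $\mathfrak{m}_{g,1}(3)$ summand. The remaining content is the assertion
\[
\bigl(M(4)/[M(2),M(2)]\bigr)\otimes\mathbb{Q}\;\cong\;\mathbb{Q},
\]
the surviving line being the Casson invariant: $\lambda|_{M(2)}$ is a homomorphism by Theorem~\ref{thm:CassonHomomorphInv}, hence kills $[M(2),M(2)]$ and descends to a non-zero map $M(4)/[M(2),M(2)]\to\mathbb{Q}$ by Corollary~\ref{cor:ker_d1}. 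What must be proved is that nothing else survives, i.e. that layer by layer the associated graded of $[M(2),M(2)]$ — the images of the Lie brackets $[\mathfrak{m}_{g,1}(2),\mathfrak{m}_{g,1}(k-2)]\subseteq\mathfrak{m}_{g,1}(k)$, together with the secondary brackets coming from commutators $[\phi,\psi]$ of degree-$2$ elements with $[\bar\phi,\bar\psi]=0$ in $\mathfrak{m}_{g,1}(4)$ — has corank exactly one in $\mathfrak{m}_{g,1}(4)$ and surjects onto $\mathfrak{m}_{g,1}(k)$ for every $k\ge 5$. Here Theorem~\ref{teo:dif_tm} identifies the relevant graded pieces with $\mathfrak{t}_{g,1}(k)$ for $k=3,\dots,6$, Corollary~\ref{cor:ker_d1} ensures the Casson line is the only $Sp$-trivial contribution that can occur, and the brackets are computed on the tree / $S^2(\Lambda^2 H_\mathbb{Q})$ presentations of $\mathcal{A}_2$ and $\mathcal{A}_3$ (Proposition~\ref{prop:tree-deriv1}). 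A convenient way to make this bookkeeping finite is to pass to the finitely generated nilpotent quotient $M(2)/\mathcal{T}_{g,1}(5)$ (nilpotent because $\mathcal{T}_{g,1}/\mathcal{T}_{g,1}(5)$ is), compute its $H_1$, and then verify that $\mathcal{T}_{g,1}(5)$ contributes nothing rationally to $H_1(M(2);\mathbb{Q})$ — for which one uses that $\lambda$ vanishes on $\sqrt{\mathcal{T}_{g,1}(5)}\supseteq\mathcal{T}_{g,1}(5)$ (Corollary~\ref{cor:ker_d1}) together with the surjectivity just described.

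\emph{Conclusion.} Granting all this, $\operatorname{coker}(d)\cong\mathbb{Q}\oplus\mathfrak{m}_{g,1}(3)$, and the five-term sequence becomes a short exact sequence
\[
0\longrightarrow \mathbb{Q}\oplus\mathfrak{m}_{g,1}(3)\longrightarrow H_1(\mathcal{M}_{g,1}(2);\mathbb{Q})\longrightarrow\mathfrak{m}_{g,1}(2)\longrightarrow 0
\]
of algebraic $Sp_{2g}(\mathbb{Q})$-modules, hence completely reducible, hence split. This yields $H_1(\mathcal{M}_{g,1}(2);\mathbb{Q})\cong\mathbb{Q}\oplus\mathfrak{m}_{g,1}(2)\oplus\mathfrak{m}_{g,1}(3)$, with $\mathbb{Q}$ the (canonical) trivial isotypic piece, $\mathfrak{m}_{g,1}(2)$ the canonical quotient via $\tau_2$, and $\mathfrak{m}_{g,1}(3)$ pinned down only up to a choice of complement, as claimed.

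\emph{Main obstacle.} The whole argument hinges on the identity $(M(4)/[M(2),M(2)])\otimes\mathbb{Q}\cong\mathbb{Q}$: one must control exactly how far the commutators of elements of $M(2)$ — primary brackets, and the secondary operations that arise when primary brackets vanish — reach inside the Johnson Lie algebra in degrees $\ge 4$. This is subtle because $\mathfrak{m}_{g,1}$ is not generated in degree one and the Morita traces obstruct surjectivity of the primary brackets in odd degrees, so the secondary operations are genuinely indispensable; moreover there is a base-point/boundary subtlety in comparing the surface-with-boundary picture with the closed one. It is precisely this core computation, together with the explicit genus bound $g\ge 6$, that we import from Massuyeau--Faes~\cite{MF}.
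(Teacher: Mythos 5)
Your proposal follows essentially the same route as the paper: filter the abelianization of $\mathcal{M}_{g,1}(2)$ by the Johnson subgroups, identify the successive rational quotients as $\mathfrak{m}_{g,1}(2)$, $\mathfrak{m}_{g,1}(3)$ and (via the Casson invariant) $\mathbb{Q}$, and import from Massuyeau--Faes the key fact that $[\mathcal{M}_{g,1}(2),\mathcal{M}_{g,1}(2)]$ has finite index in $\ker(\lambda,\widetilde{\tau}_2)$, i.e.\ that nothing beyond the Casson line survives in degree $\geq 4$. One cosmetic caveat: your final appeal to complete reducibility of algebraic $Sp_{2g}(\mathbb{Q})$-modules is both unjustified as stated and unnecessary --- the proposition only asserts a (non-canonical) splitting of $\mathbb{Q}$-vector spaces, which is automatic and is all the paper uses.
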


 \begin{proof}
 Consider the filtration of the abelianization of $\mathcal{M}_{g,1}(2)$:
 	\[
  \dfrac{\mathcal{M}_{g,1}(4)}{[\mathcal{M}_{g,1}(2),\mathcal{M}_{g,1}(2)]} \subseteq \dfrac{\mathcal{M}_{g,1}(3)}{[\mathcal{M}_{g,1}(2),\mathcal{M}_{g,1}(2)]} \subseteq \dfrac{\mathcal{M}_{g,1}(2)}{[\mathcal{M}_{g,1}(2),\mathcal{M}_{g,1}(2)]}.
 	\]
 	For $g \geq 3$, the Johnson group $\mathcal{M}_{g,1}(2)$ is finitely generated by~\cite{MR4416592}, hence this is a filtration of finitely generated abelian groups. 
 	We start with the extension of finitely generated abelian groups
 		\begin{equation*}
 		\xymatrix@C=10mm@R=10mm{1 \ar@{->}[r] & \dfrac{\mathcal{M}_{g,1}(3)}{[\mathcal{M}_{g,1}(2),\mathcal{M}_{g,1}(2)]} \ar@{->}[r] & \dfrac{\mathcal{M}_{g,1}(2)}{[\mathcal{M}_{g,1}(2),\mathcal{M}_{g,1}(2)]} \ar@{->}[r] & \dfrac{\mathcal{M}_{g,1}(2)}{\mathcal{M}_{g,1}(3)} \ar@{->}[r] & 1,}
 	\end{equation*}
 	and its rationalization:
 	
 	\begin{equation*}
 		\xymatrix@C=10mm@R=10mm{1 \ar@{->}[r] & \dfrac{\mathcal{M}_{g,1}(3)}{[\mathcal{M}_{g,1}(2),\mathcal{M}_{g,1}(2)]}\otimes \mathbb{Q} \ar@{->}[r] & \dfrac{\mathcal{M}_{g,1}(2)}{[\mathcal{M}_{g,1}(2),\mathcal{M}_{g,1}(2)]}\otimes \mathbb{Q} \ar@{->}[r] & \mathfrak{m}_{g,1}(2) \ar@{->}[r] & 0.}
 	\end{equation*}
 
  This is an extension of rational vector spaces, so it splits, but not canonically. The leftmost group fits in an extension:
 
 	\begin{equation*}
 		\xymatrix@C=10mm@R=10mm{1 \ar@{->}[r] & \dfrac{\mathcal{M}_{g,1}(4)}{[\mathcal{M}_{g,1}(2),\mathcal{M}_{g,1}(2)]}  \ar@{->}[r] & \dfrac{\mathcal{M}_{g,1}(3)}{[\mathcal{M}_{g,1}(2),\mathcal{M}_{g,1}(2)]}\ar@{->}[r] & \dfrac{\mathcal{M}_{g,1}(3)}{\mathcal{M}_{g,1}(4)} \ar@{->}[r] & 0}.
 	\end{equation*}
 	We rationalize it to get a central extension of rational vector spaces:
 	
 	\begin{equation*}
 		\xymatrix@C=10mm@R=10mm{1 \ar@{->}[r] & \dfrac{\mathcal{M}_{g,1}(4)}{[\mathcal{M}_{g,1}(2),\mathcal{M}_{g,1}(2)]}\otimes \mathbb{Q} \ar@{->}[r] & \dfrac{\mathcal{M}_{g,1}(3)}{[\mathcal{M}_{g,1}(2),\mathcal{M}_{g,1}(2)]}\otimes \mathbb{Q} \ar@{->}[r] & \mathfrak{m}_{g,1}(3) \ar@{->}[r] & 0}.
 	\end{equation*}
 
 The key point is to show that the group  $[\mathcal{M}_{g,1}(2),\mathcal{M}_{g,1}(2)]$ is a finite index subgroup of the kernel on the map $\lambda: \mathcal{M}_{g,1}(4) \rightarrow \mathbb{Q}$ induced by the Casson invariant. In the boundary-less case this is a consequence of~\cite[Corollary 1.4]{MSS}, but the arguments from this work can not be simply transposed to the case with a boundary component. The core of the argument is that the Casson invariant together with an extended version of the Johnson homomorphisms due to Heap~\cite{Heap}, $(\lambda,\widetilde{\tau}_2): \mathcal{M}_{g,1}(2) \rightarrow \mathbb{Z} \oplus \mathfrak{m}_{g,1}(2) \oplus \mathfrak{m}_{g,1}(3)$ induce an embedding of the torsion-free part of the group $H_1(\mathcal{M}_{g,1}(2),\mathbb{Z})$, see~\cite[Theorem 3.2]{MF} for a proof. As a consequence $[\mathcal{M}_{g,1}(2),\mathcal{M}_{g,1}(2)]$ is a finite index subgroup of $\ker (\lambda,\tilde{\tau}_2)$. Again by~\cite{Heap}, $\ker \tilde{\tau}_2= \mathcal{M}_{g,1}(4)$, and we get the desired result. 
 
 Moreover,  since the Casson invariant is a group homomorphism on $\mathcal{M}_{g,1}(2)$,   we have a split extension:
 	\begin{equation*}
 	\xymatrix@C=10mm@R=10mm{1 \ar@{->}[r] & \mathbb{Q} \ar@{->}[r] & \dfrac{\mathcal{M}_{g,1}(3)}{[\mathcal{M}_{g,1}(2),\mathcal{M}_{g,1}(2)]}\otimes \mathbb{Q} \ar@{->}[r] & \mathfrak{m}_{g,1}(3) \ar@{->}[r] & 0}.
 \end{equation*}

Altogether we have a commutative diagram of rational vector spaces:
\[
\begin{tikzcd}
	& & 0 \ar[d] & &  \\
	0 \ar[r] &   \mathbb{Q} \ar[r]& \dfrac{\mathcal{M}_{g,1}(3)}{[\mathcal{M}_{g,1}(2),\mathcal{M}_{g,1}(2)]} \otimes \mathbb{Q} \ar[r,"\tau_3"] \ar[d]  & \mathfrak{m}_{g,1}(3) \ar[r] & 0 \\
	& &  H_1(\mathcal{M}_{g,1}(2);\mathbb{Q}) \ar[ul, "\lambda"]  \ar[d, "\tau_2"]&  &   \\
	& & \mathfrak{m}_{g,1}(2) \ar[d]  & & \\
&	& 0 & & 
\end{tikzcd}
\]
 \end{proof}

 By construction all the labelled arrows in the last diagram of the above proof are in $\mathcal{AB}_{g,1}$-equivariant maps, so this is a diagram of $\mathcal{AB}_{g,1}$-modules. In particular the horizontal extension is a split extension of $\mathcal{AB}_{g,1}$-modules.
 
 \begin{proposition}\label{prop_ab_K/T}
 	The Casson invariant together with the second Johnson homomorphism $\tau_2$ induce an isomorphism for $g\geq 6.$
 	$$H_1(\mathcal{M}_{g,1}(2);\mathbb{Q})_{\mathcal{AB}_{g,1}}\simeq \mathbb{Q}\oplus (\mathfrak{m}_{g,1}(2))_{GL_g(\mathbb{Z})}.$$
 \end{proposition}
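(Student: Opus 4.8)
The plan is to compute the $\mathcal{AB}_{g,1}$-coinvariants of the commutative diagram of $\mathcal{AB}_{g,1}$-modules produced in the proof of Proposition~\ref{prop:ab_M(2)/T(5)}. Recall from there the short exact sequence of $\mathcal{AB}_{g,1}$-modules
\[
0 \to \tfrac{\mathcal{M}_{g,1}(3)}{[\mathcal{M}_{g,1}(2),\mathcal{M}_{g,1}(2)]}\otimes\mathbb{Q} \to H_1(\mathcal{M}_{g,1}(2);\mathbb{Q}) \xrightarrow{\tau_2} \mathfrak{m}_{g,1}(2) \to 0,
\]
together with the $\mathcal{AB}_{g,1}$-split extension $0 \to \mathbb{Q} \to \tfrac{\mathcal{M}_{g,1}(3)}{[\mathcal{M}_{g,1}(2),\mathcal{M}_{g,1}(2)]}\otimes\mathbb{Q} \xrightarrow{\tau_3}\mathfrak{m}_{g,1}(3)\to 0$, whose splitting and sub-$\mathbb{Q}$ are furnished by the Casson invariant $\lambda$. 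The $\mathcal{AB}_{g,1}$-action on this $\mathbb{Q}$ is trivial because $\lambda$ is conjugation-invariant, and on each $\mathfrak{m}_{g,1}(k)$ it factors through $GL_g(\mathbb{Z})$: indeed $\tau_k\otimes\mathbb{Q}$ is an injective $\mathcal{M}_{g,1}$-equivariant map into $H_\mathbb{Q}\otimes\mathcal{L}_{k+1}(H_\mathbb{Q})$, and $\mathcal{M}_{g,1}$ acts on this target through its symplectic quotient, whose restriction to $\mathcal{AB}_{g,1}$ is the diagonal $GL_g(\mathbb{Z})$. First I would apply the right exact functor $(-)_{\mathcal{AB}_{g,1}}$ to both sequences.

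The heart of the matter is the vanishing $(\mathfrak{m}_{g,1}(3))_{GL_g(\mathbb{Z})}=0$; this is where the parity of the weight enters, exactly as in Case~1 of the proof of Proposition~\ref{prop:chords}. Via $\tau_3$ the module $\mathfrak{m}_{g,1}(3)$ sits $GL_g(\mathbb{Z})$-equivariantly inside $H_\mathbb{Q}\otimes\mathcal{L}_4(H_\mathbb{Q})\subseteq H_\mathbb{Q}^{\otimes 5}$ (a degree-$3$ uni-trivalent tree has five leaves), and the central element $-\mathrm{Id}\in GL_g(\mathbb{Z})$ — which lies in the image of $\mathcal{AB}_{g,1}$ — acts on $H_\mathbb{Q}^{\otimes 5}$, hence on $\mathfrak{m}_{g,1}(3)$, by $(-1)^{5}=-1$. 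In the coinvariants this forces $2[v]=[v]-[-v]=0$, so $[v]=0$ since we are over $\mathbb{Q}$. Feeding this into the coinvariants of the split top row yields $\bigl(\tfrac{\mathcal{M}_{g,1}(3)}{[\mathcal{M}_{g,1}(2),\mathcal{M}_{g,1}(2)]}\otimes\mathbb{Q}\bigr)_{\mathcal{AB}_{g,1}}\simeq\mathbb{Q}$, generated by the class coming from $\lambda$.

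Finally I would assemble the pieces. Right exactness of coinvariants applied to the first sequence gives an exact sequence $\mathbb{Q}\xrightarrow{\alpha}H_1(\mathcal{M}_{g,1}(2);\mathbb{Q})_{\mathcal{AB}_{g,1}}\xrightarrow{\tau_2}(\mathfrak{m}_{g,1}(2))_{GL_g(\mathbb{Z})}\to 0$. Since coinvariants is only right exact, the injectivity of $\alpha$ is not automatic and must be extracted by hand: $\lambda\colon H_1(\mathcal{M}_{g,1}(2);\mathbb{Q})\to\mathbb{Q}$ is $\mathcal{AB}_{g,1}$-invariant, hence descends to the coinvariants, and by construction of the diagram its restriction to $\tfrac{\mathcal{M}_{g,1}(3)}{[\mathcal{M}_{g,1}(2),\mathcal{M}_{g,1}(2)]}\otimes\mathbb{Q}$ is the projection onto the canonical $\mathbb{Q}$-summand; thus $\lambda\circ\alpha$ is an automorphism of $\mathbb{Q}$, so $\alpha$ is a split monomorphism. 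Therefore $H_1(\mathcal{M}_{g,1}(2);\mathbb{Q})_{\mathcal{AB}_{g,1}}\simeq\mathbb{Q}\oplus(\mathfrak{m}_{g,1}(2))_{GL_g(\mathbb{Z})}$, the first factor detected by $\lambda$ and the second by $\tau_2$, which is the assertion; the hypothesis $g\geq 6$ is inherited from Proposition~\ref{prop:ab_M(2)/T(5)}, and for such $g$ the second summand is the $2$-dimensional space described in Proposition~\ref{prop:tree-deriv1}. The main obstacle is thus the weight-parity vanishing $(\mathfrak{m}_{g,1}(3))_{GL_g(\mathbb{Z})}=0$ combined with the care forced by the mere right exactness of coinvariants; the rest is formal diagram manipulation.
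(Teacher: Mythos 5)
Your proposal follows the paper's own proof essentially verbatim: take $\mathcal{AB}_{g,1}$-coinvariants of the two extensions from Proposition~\ref{prop:ab_M(2)/T(5)}, kill $\mathfrak{m}_{g,1}(3)_{GL_g(\mathbb{Z})}$ using the action of $-\mathrm{Id}$ on an odd tensor power of $H_\mathbb{Q}$, and use the globally defined, conjugation-invariant $\lambda$ to repair the failure of left exactness of coinvariants and split off the $\mathbb{Q}$-summand. The only additions are welcome explications (why $-\mathrm{Id}$ acts by $-1$, why $\alpha$ is injective), so the argument is correct and matches the paper's route.
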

 \begin{proof}
 	We first analyse the co-invariants in the horizontal split  extension:
 	\[
 	\begin{tikzcd}
 		0 \ar[r] & \mathbb{Q} \ar[r] & \left(\dfrac{\mathcal{M}_{g,1}(3)}{[\mathcal{M}_{g,1}(2),\mathcal{M}_{g,1}(2)]} \otimes \mathbb{Q}\right)_{\mathcal{AB}_{g,1}} \ar[r, "\tau_2"] & \mathfrak{m}_{g,1}(3)_{\mathcal{AB}_{g,1}} \ar[r] & 0.
 	\end{tikzcd}
 	\]
 	The action of $\mathcal{AB}_{g,1}$ on $\mathfrak{m}_{g,1}(3)$ factors through $GL_g(\mathbb{Z}) \subseteq Sp_{2g}(\mathbb{Z})$, and the matrix $-Id$ acts as $-Id$ on this $\mathbb{Q}$-vector space, hence the coinvariants have to be trivial and the exact sequence boils down to an isomorphism
 	\[
 	\begin{tikzcd}
 		\mathbb{Q}  & \ar[l, "\lambda", "\sim"'] \left(\dfrac{\mathcal{M}_{g,1}(3)}{[\mathcal{M}_{g,1}(2),\mathcal{M}_{g,1}(2)]} \otimes \mathbb{Q}\right)_{\mathcal{AB}_{g,1}}.
 	\end{tikzcd}	
 	\]
 	
 	When computing the $\mathcal{AB}_{g,1}$-coinvariants of the vertical extension, a priori we only have an exact sequence:
 	\[
 	\begin{tikzcd}
 		  \left(\dfrac{\mathcal{M}_{g,1}(3)}{[\mathcal{M}_{g,1}(2),\mathcal{M}_{g,1}(2)]} \otimes \mathbb{Q}\right)_{\mathcal{AB}_{g,1}} \ar[r]  & \left(\dfrac{\mathcal{M}_{g,1}(2)}{[\mathcal{M}_{g,1}(2),\mathcal{M}_{g,1}(2)]}\otimes \mathbb{Q}\right)_{\mathcal{AB}_{g,1}} \arrow[dl,out=350,in=170,overlay, "\tau_2"'] \\
 \mathfrak{m}_{g,1}(2)_{\mathcal{AB}_{g,1}} \ar[r]  & 0.
 	\end{tikzcd}
 	\]
 	
 	But because the $\mathcal{AB}_{g,1}$-equivariant morphism $\lambda$ is defined on $\mathcal{M}_{g,1}(2)$, we get a split exact sequence of rational vector spaces
 	\[
 	\begin{tikzcd}
 		0 \ar[r] & \mathbb{Q}  \ar[r]  & \left(\dfrac{\mathcal{M}_{g,1}(2)}{[\mathcal{M}_{g,1}(2),\mathcal{M}_{g,1}(2)]}\otimes \mathbb{Q}\right)_{\mathcal{AB}_{g,1}} \ar[r, "\tau_2"]& \mathfrak{m}_{g,1}(2)_{\mathcal{AB}_{g,1}} \ar[r]  & 0.
 	\end{tikzcd}
 	\]
 	As the action of $\mathcal{AB}_{g,1}$ on  $\mathfrak{m}_{g,1}(2)$ factors via $GL_g(\mathbb{Z})$ we have the desired result.  
 \end{proof}

 \begin{proof}[Proof of Theorem 3.1] Let $F: \mathcal{S} \rightarrow \mathbb{Q}$ be a normalized invariant whose associated maps $F_g: \mathcal{M}_{g,1}(2) \rightarrow \mathbb{Q}$ are group homomorphisms. Then, by the conjugacy condition, $F_g$ factors via $H_1(\mathcal{M}_{g,1}(2);\mathbb{Q})_{\mathcal{AB}_{g,1}}\simeq \mathbb{Q}\oplus (\mathfrak{m}_{g,1}(2))_{GL_g(\mathbb{Z})}$, and by stability   $\lim_g F_g$ factors through  $\lim_g  H_1(\mathcal{M}_{g,1}(2);\mathbb{Q})_{\mathcal{AB}_{g,1}}$. The $\mathbb{Q}$ factor is induced by the Casson invariant, and the group endomorphisms of $\mathbb{Q}$ are determined by their value at $1$, hence up to adding some multiple of $\lambda$ to $F$ we may assume that for $g \geq 6$, $F_g$ factors through the composition  $\mathcal{M}_{g,1}(2) \stackrel{\tau_2}{\longrightarrow} \mathcal{M}_{g,1}(2)/\mathcal{M}_{g,1}(3) \rightarrow \mathfrak{m}_{g,1}(2) \rightarrow \mathfrak{m}_{g,1}(2)_{GL_g(\mathbb{Z})}$. In \cite{pitsch3} it is shown that $\tau_2(\mathcal{M}_{g,1}(2)) = \tau_2(\mathcal{A}_{g,1}(2)) + \tau_2(\mathcal{B}_{g,1}(2))$, and since an invariant has to vanish on this sum because the involved mapping classes all build the sphere $\mathbb{S}^3$, we conclude that for $g \geq 6$, up to adding some multiple of $\lambda$, $F_g=0$ and hence $F=0$.
\end{proof}

\section{Cocycles of invariants of degree at most $2$}\label{sec:cocyandinvdeg2}

Let $F: \mathcal{S} \rightarrow \mathbb{Q}$ be an invariant of degree at most $2$, and for each $g \geq 1$, denote by $C_g$ the associated  trivialized $2$-cocycle.
 The key observation for the present work is
\begin{lemma}\label{lem:type2givesbilincocycle}
	Let $F: \mathcal{S} \rightarrow  \mathbb{Q}$ be a normalized invariant of finite type $2$. Then, its associated $2$-cocycles on the Johnson subgroup $\mathcal{M}_{g,1}(2)$ are bilinear maps.
\end{lemma}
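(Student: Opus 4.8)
The plan is to exploit the two facts established just before this lemma: first, that a degree $2$ invariant $F$ has an associated family of trivialized $2$-cocycles $C_g$ satisfying conditions (1)--(4), and second, that by Proposition~\ref{prop:coc-finite-type} (applied with $k=2$) $F$ behaves additively on products $h_1h_2$ as soon as $h_1 \in \sqrt{\mathcal{T}_{g,1}(k_1)}$, $h_2 \in \sqrt{\mathcal{T}_{g,1}(k_2)}$ with $k_1+k_2 > 4$. The strategy is to show that $C_g$, as a function on $\mathcal{M}_{g,1}(2) \times \mathcal{M}_{g,1}(2)$, first descends to a function on the product of abelianizations, and then is additive in each variable separately, hence bilinear. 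The reduction to abelianizations is the standard cocycle manipulation: a trivialized $2$-cocycle $C(\phi,\psi) = F(\phi) + F(\psi) - F(\phi\psi)$ automatically satisfies the $2$-cocycle identity, and one checks directly from the cocycle identity that $C$ is symmetric up to a coboundary and, more to the point, that $C(\phi[\psi,\chi], \cdot)$ differs from $C(\phi,\cdot)$ by terms controlled by $F$ on commutators.

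The heart of the argument is therefore to control $F$ (equivalently $C_g$) on commutators and on deeper terms of the filtration. Here I would invoke the key finite-type input: since $F$ is of degree $2$, Lemma~\ref{lem:anulradsbgrpo} gives $F_g\vert_{\sqrt{\mathcal{T}_{g,1}(5)}} = 0$, and more importantly Proposition~\ref{prop:coc-finite-type} with $k=2$ shows that for $\varphi \in \sqrt{\mathcal{T}_{g,1}(2)}$ and $\psi \in \sqrt{\mathcal{T}_{g,1}(3)}$ (so $2+3 = 5 > 4$) we have $F(\mathbb{S}^3_{\varphi\psi}) = F(\mathbb{S}^3_\varphi) + F(\mathbb{S}^3_\psi)$; in cocycle language, $C_g(\varphi,\psi) = 0$ whenever one argument lies in $\sqrt{\mathcal{T}_{g,1}(2)} = \sqrt{\mathcal{T}_{g,1}(2)}$ and the other in $\sqrt{\mathcal{T}_{g,1}(3)}$. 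Now $\mathcal{M}_{g,1}(2) = \sqrt{\mathcal{T}_{g,1}(2)}$ up to the Johnson-filtration comparison, and since $[\mathcal{M}_{g,1}(2), \mathcal{M}_{g,1}(2)] \subseteq \mathcal{M}_{g,1}(4) \subseteq \mathcal{M}_{g,1}(3)$ and $\mathcal{T}_{g,1}(3) \subseteq \sqrt{\mathcal{T}_{g,1}(3)} \subseteq \mathcal{M}_{g,1}(3)$ with $\mathcal{M}_{g,1}(3)$ of finite index over $\sqrt{\mathcal{T}_{g,1}(3)}$ (via Corollary~\ref{cor:ker_d1} and rational coefficients), any commutator $[\psi,\chi]$ of elements of $\mathcal{M}_{g,1}(2)$ lies — after passing to $\mathbb{Q}$-coefficients — in the rational span of $\sqrt{\mathcal{T}_{g,1}(3)}$. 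Hence $C_g(\varphi, [\psi,\chi]) = 0$ for all $\varphi \in \mathcal{M}_{g,1}(2)$, and symmetrically in the first slot.

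From these vanishing statements bilinearity follows formally. Using the $2$-cocycle identity $C_g(\phi,\psi\chi) + C_g(\psi,\chi) = C_g(\phi\psi,\chi) + C_g(\phi,\psi)$ one writes, for fixed $\chi$, the difference $C_g(\phi,\psi\chi) - C_g(\phi\psi,\chi) - C_g(\phi,\psi) + C_g(\psi,\chi)=0$; specializing appropriately and using that $C_g$ vanishes as soon as one argument is a product of commutators (which we just showed), one gets that $\psi \mapsto C_g(\phi,\psi)$ is additive on $\mathcal{M}_{g,1}(2)$, and likewise $\phi \mapsto C_g(\phi,\psi)$, and then that $C_g$ kills $p$-th powers linearly so that it genuinely factors through $H_1(\mathcal{M}_{g,1}(2);\mathbb{Q}) \times H_1(\mathcal{M}_{g,1}(2);\mathbb{Q})$ as a $\mathbb{Q}$-bilinear form. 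The main obstacle I anticipate is the bookkeeping in this last formal step: carefully deducing genuine $\mathbb{Z}$-bilinearity (hence, after tensoring, $\mathbb{Q}$-bilinearity) of $C_g$ from the cocycle identity together with the vanishing on commutator-containing arguments, and making sure the radical-closure/finite-index subtleties relating $\mathcal{M}_{g,1}(k)$ and $\sqrt{\mathcal{T}_{g,1}(k)}$ are handled correctly over $\mathbb{Q}$ — that is where one must be careful, rather than in any hard geometry.
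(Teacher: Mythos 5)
Your reduction has a genuine gap at the step you call ``formal''. The defect of additivity of $C_g$ in its first argument is, by direct expansion, exactly
\[
C_g(ab,c)-C_g(a,c)-C_g(b,c)\;=\;-F_g\bigl((1-a)(1-b)(1-c)\bigr),
\]
a value of $F_g$ on a \emph{triple} product in the augmentation ideal of $\mathcal{M}_{g,1}(2)$. The inputs you allow yourself --- the cocycle identity, descent to the abelianization, and the vanishing of $C_g$ when one argument lies in $\sqrt{\mathcal{T}_{g,1}(3)}$ (Proposition~\ref{prop:coc-finite-type}, which only controls products of \emph{two} augmentation-ideal elements) --- cannot produce this. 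Indeed, they are all satisfied by trivialized $2$-cocycles that are not bilinear: on $G=\mathbb{Z}=\langle t\rangle$ the function $F(t^n)=n^3$ gives $C(t^m,t^n)=-3mn(m+n)$, which vanishes whenever an argument is a commutator (vacuously) and satisfies the cocycle identity, yet $C(t^2,t)=-18\neq -12=2\,C(t,t)$. So ``vanishing on commutators $+$ cocycle identity $\Rightarrow$ biadditive'' is false, and this is not bookkeeping but the missing mathematical content. The paper's proof supplies exactly the needed input: since $F$ has type $2$ it kills $(I\mathcal{T}_{g,1})^5\supseteq(I\mathcal{T}_{g,1})^6$, and since $\mathcal{M}_{g,1}(2)=\sqrt{\mathcal{T}_{g,1}(2)}$ (Johnson) one has $I\mathcal{M}_{g,1}(2)\subseteq (I\mathcal{T}_{g,1})^2$, hence $(I\mathcal{M}_{g,1}(2))^3\subseteq(I\mathcal{T}_{g,1})^6$ and $F_g$ vanishes there; expanding $(1-a)(1-b)(1-c)$ then gives bilinearity in one line. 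Your argument never invokes this degree-three vanishing, and without it the conclusion does not follow.

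A secondary error: you assert that $\mathcal{M}_{g,1}(3)$ is of finite index over $\sqrt{\mathcal{T}_{g,1}(3)}$. By Corollary~\ref{cor:ker_d1}, $\sqrt{\mathcal{T}_{g,1}(3)}=\ker\lambda\vert_{\mathcal{M}_{g,1}(3)}$ and $\lambda$ is a non-trivial homomorphism there, so the quotient is infinite cyclic. The inclusion you actually need, $[\mathcal{M}_{g,1}(2),\mathcal{M}_{g,1}(2)]\subseteq\sqrt{\mathcal{T}_{g,1}(3)}$, is nevertheless true (commutators lie in $\mathcal{M}_{g,1}(4)\subseteq\mathcal{M}_{g,1}(3)$ and are killed by $\lambda$ since $\lambda$ is a homomorphism on $\mathcal{M}_{g,1}(2)$), but the justification must go through the Casson invariant rather than a finite-index claim, and in any case, as explained above, this inclusion alone does not yield the lemma.
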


\begin{proof}
	By assumption $F$ vanishes on $\mathcal{S}_3$, hence the maps $F_g$ vanish on the augmentation powers $(I\mathcal{T}_{g,1})^5$ and $(I\mathcal{T}_{g,1})^6$. By Johnson's computations of the abelianization of the Torelli group~\cite{jon_3}, $\sqrt{\mathcal{T}_{g,1}(2)} = \mathcal{M}_{g,1}(2)$, hence $I\mathcal{M}_{g,1}(2) \subseteq (I  \mathcal{T}_{g,1})^2$, and $F_g$ viewed as a function on the Johnson subgroup vanishes on $(I\mathcal{M}_{g,1}(2))^3 \subseteq (I\mathcal{T}_{g,1})^6$. Let us show that for a fixed value of $g$, the $2$-cocycle $C_g$ associated to $F_g$ is linear on the left entry. Let  $a,b,c \in \mathcal{M}_{g,1}(2)$. As  $F_g$ vanishes on the product $(1-a)(1-b)(1-c) \in (I\mathcal{M}_{g,1}(2))^3$, from the equalities
	\begin{align*}
		(1-a)(1-b)(1-c) & = 1-b-c +bc -a +ab +ac -abc \\
		& = 1 -b -c +bc - a -c +ac +ab +c -abc 
	\end{align*}
	and the fact that $F_g(1) = 0$, we deduce that
	\begin{align*}
		0 & = -F_g(b) -F_g(c) +F_g(bc) - F_g(a) - F_g(c) + F_g(ac) + F_g(ab) + F_g(c) - F_g(abc) \\
		& = -C_g(b,c) -C_g(a,c) + C_g(ab,c).
	\end{align*}
	The proof that $C_g$ is linear on the second variable is analogous.
	
\end{proof}

\begin{lemma}\label{lem:2cocyclefactors}
For each $g \geq 1$, the $2$-cocyle $C_g$ associated to a degree $2$ invariant, viewed as a bilinear map on the Johnson subgroup factors through $\mathcal{M}_{g,1}(2)/\sqrt{\mathcal{T}_{g,1}(3)}\otimes \mathbb{Q} \times \mathcal{M}_{g,1}(2)/\sqrt{\mathcal{T}_{g,1}(3)}\otimes  \mathbb{Q}$.
\end{lemma}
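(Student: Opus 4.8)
The plan is to reduce the statement to the pointwise vanishing $C_g(\phi,\psi)=0$ whenever one of the two entries lies in $\sqrt{\mathcal{T}_{g,1}(3)}$, and then to deduce this vanishing from the additivity principle of Garoufalidis--Levine, Proposition~\ref{prop:coc-finite-type}. First I would record that, by Lemma~\ref{lem:type2givesbilincocycle}, $C_g$ is biadditive on $\mathcal{M}_{g,1}(2)\times\mathcal{M}_{g,1}(2)$ with values in the torsion-free divisible group $\mathbb{Q}$; hence, fixing one entry, $C_g$ becomes an additive map $\mathcal{M}_{g,1}(2)\to\mathbb{Q}$, and if this map kills $\sqrt{\mathcal{T}_{g,1}(3)}$ it factors through $\bigl(\mathcal{M}_{g,1}(2)/\sqrt{\mathcal{T}_{g,1}(3)}\bigr)\otimes\mathbb{Q}$. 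For this quotient to make sense one checks that $\sqrt{\mathcal{T}_{g,1}(3)}$ is a normal subgroup of $\mathcal{M}_{g,1}(2)$ with abelian quotient: by Corollary~\ref{cor:ker_d1} one has $\sqrt{\mathcal{T}_{g,1}(3)}=\ker\lambda\vert_{\mathcal{M}_{g,1}(3)}=\mathcal{M}_{g,1}(3)\cap\ker\lambda\vert_{\mathcal{M}_{g,1}(2)}$, an intersection of two normal subgroups of $\mathcal{M}_{g,1}(2)$ (using that $\lambda$ is a homomorphism on $\mathcal{M}_{g,1}(2)$), and $[\mathcal{M}_{g,1}(2),\mathcal{M}_{g,1}(2)]\subseteq\mathcal{M}_{g,1}(4)\subseteq\mathcal{M}_{g,1}(3)$ is killed by $\lambda$, hence sits inside $\sqrt{\mathcal{T}_{g,1}(3)}$.

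Next I would run the vanishing argument. By Johnson's computation of the abelianization of the Torelli group~\cite{jon_3} (recalled in the proof of Lemma~\ref{lem:type2givesbilincocycle}) one has $\mathcal{M}_{g,1}(2)=\sqrt{\mathcal{T}_{g,1}(2)}$, so an arbitrary $\psi\in\mathcal{M}_{g,1}(2)$ lies in $\sqrt{\mathcal{T}_{g,1}(2)}$. Given $\phi\in\sqrt{\mathcal{T}_{g,1}(3)}$, I would then apply Proposition~\ref{prop:coc-finite-type} to our degree-$2$ invariant $F$ with $k_1=3$ and $k_2=2$: since $k_1+k_2=5>4=2\cdot 2$, it yields $F(\mathbb{S}^3_{\phi\psi})=F(\mathbb{S}^3_{\phi})+F(\mathbb{S}^3_{\psi})$, that is $F_g(\phi\psi)=F_g(\phi)+F_g(\psi)$, i.e.\ $C_g(\phi,\psi)=0$. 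Exchanging the roles of the two slots (now $k_1=2$, $k_2=3$, still with $k_1+k_2>4$) gives $C_g(\psi,\phi)=0$ as well, and biadditivity of $C_g$ then finishes the proof by the reduction of the first paragraph.

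I do not anticipate a genuine obstacle here: the crux is the numerical inequality $2+3>2\cdot 2$, which is precisely where the degree-$2$ hypothesis enters, combined with the two external inputs $\mathcal{M}_{g,1}(2)=\sqrt{\mathcal{T}_{g,1}(2)}$ and Proposition~\ref{prop:coc-finite-type}. The only mildly technical point is the bookkeeping in the first paragraph, ensuring that the target $\bigl(\mathcal{M}_{g,1}(2)/\sqrt{\mathcal{T}_{g,1}(3)}\bigr)\otimes\mathbb{Q}$ is a well-defined abelian group so that the phrase ``$C_g$ factors through it'' is meaningful.
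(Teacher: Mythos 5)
Your proposal is correct and follows essentially the same route as the paper: the vanishing of $C_g$ on $\mathcal{M}_{g,1}(2)\times\sqrt{\mathcal{T}_{g,1}(3)}$ and $\sqrt{\mathcal{T}_{g,1}(3)}\times\mathcal{M}_{g,1}(2)$ via Proposition~\ref{prop:coc-finite-type} together with $\mathcal{M}_{g,1}(2)=\sqrt{\mathcal{T}_{g,1}(2)}$, then biadditivity from Lemma~\ref{lem:type2givesbilincocycle} and divisibility of $\mathbb{Q}$ to pass to the rationalized quotient. The only difference is that you spell out the well-definedness of $\mathcal{M}_{g,1}(2)/\sqrt{\mathcal{T}_{g,1}(3)}$, which the paper leaves implicit.
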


\begin{proof}
Since $\mathcal{M}_{g,1}(2)=\sqrt{\mathcal{T}_{g,1}(2)}$, from Proposition~\ref{prop:coc-finite-type} we know that the 2-cocycle $C_g$ is zero on $\mathcal{M}_{g,1}(2)\times \sqrt{\mathcal{T}_{g,1}(3)},$ $\sqrt{\mathcal{T}_{g,1}(3)}\times \mathcal{M}_{g,1}(2)$. By Lemma~\ref{lem:type2givesbilincocycle}, in each variable it factors then through the quotient $\mathcal{M}_{g,1}(2)/\sqrt{\mathcal{T}_{g,1}(3)}$. In each variable the $2$-cocycle is also a morphism with rational values, hence  it factors through the rationalization $\mathcal{M}_{g,1}(2)/\sqrt{\mathcal{T}_{g,1}(3)}\otimes  \mathbb{Q}$.
\end{proof}

\begin{proposition}\label{prop:H1KmodT3}
The Casson invariant $\lambda$ and the second Johnson homomorphism $\tau_2$ induce an isomorphism
\[
\begin{tikzcd}
	\dfrac{\mathcal{M}_{g,1}(2)}{\sqrt{\mathcal{T}_{g,1}(3)}} \otimes \mathbb{Q} \ar[r,"\sim"]& \mathbb{Q}\oplus\mathfrak{m}_{g,1}(2).
\end{tikzcd}
\]
\end{proposition}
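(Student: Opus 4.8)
The plan is to reduce the statement to two structural facts already assembled in the preliminaries: Morita's theorem that the Casson invariant restricts to a genuine group homomorphism $\lambda\colon \mathcal{M}_{g,1}(2)\to\mathbb{Q}$ (the content of Theorem~\ref{thm:CassonHomomorphInv}), and the identification $\sqrt{\mathcal{T}_{g,1}(3)} = \ker\bigl(\lambda\vert_{\mathcal{M}_{g,1}(3)}\bigr)$ together with the non-triviality of this homomorphism, both coming from Corollary~\ref{cor:ker_d1}. Granting these, the rest is elementary bookkeeping with short exact sequences of abelian groups and flatness of $\mathbb{Q}$.

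First I would justify that $\mathcal{M}_{g,1}(2)/\sqrt{\mathcal{T}_{g,1}(3)}$ is a well-defined abelian group. Since $\lambda$ is a homomorphism on $\mathcal{M}_{g,1}(2)$ it is invariant under conjugation by $\mathcal{M}_{g,1}(2)$, so $\sqrt{\mathcal{T}_{g,1}(3)} = \ker\bigl(\lambda\vert_{\mathcal{M}_{g,1}(3)}\bigr)$ is normal in $\mathcal{M}_{g,1}(2)$; and since the Johnson filtration is central, $[\mathcal{M}_{g,1}(2),\mathcal{M}_{g,1}(2)] \subseteq \mathcal{M}_{g,1}(4)\subseteq\mathcal{M}_{g,1}(3)$, while $\lambda$ being a homomorphism kills this commutator subgroup, whence $[\mathcal{M}_{g,1}(2),\mathcal{M}_{g,1}(2)] \subseteq \ker\bigl(\lambda\vert_{\mathcal{M}_{g,1}(3)}\bigr) = \sqrt{\mathcal{T}_{g,1}(3)}$.

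Next, using the chain $\sqrt{\mathcal{T}_{g,1}(3)}\subseteq\mathcal{M}_{g,1}(3)\subseteq\mathcal{M}_{g,1}(2)$ I would write down the short exact sequence of abelian groups
\[
0 \longrightarrow \frac{\mathcal{M}_{g,1}(3)}{\sqrt{\mathcal{T}_{g,1}(3)}} \longrightarrow \frac{\mathcal{M}_{g,1}(2)}{\sqrt{\mathcal{T}_{g,1}(3)}} \longrightarrow \frac{\mathcal{M}_{g,1}(2)}{\mathcal{M}_{g,1}(3)} \longrightarrow 0
\]
and tensor it with $\mathbb{Q}$. The right-hand term is $\mathfrak{m}_{g,1}(2)$ by definition, with the isomorphism induced by $\tau_2$. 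For the left-hand term, Corollary~\ref{cor:ker_d1} identifies $\mathcal{M}_{g,1}(3)/\sqrt{\mathcal{T}_{g,1}(3)}$ with $\operatorname{im}\bigl(\lambda\vert_{\mathcal{M}_{g,1}(3)}\bigr)$, a non-zero subgroup of $\mathbb{Q}$; being torsion-free of rank one it rationalizes to $\mathbb{Q}$, with the isomorphism induced by $\lambda$. Finally, since $\lambda$ has type $1$ it vanishes on $\sqrt{\mathcal{T}_{g,1}(3)}$ by Lemma~\ref{lem:anulradsbgrpo}, and being a homomorphism on all of $\mathcal{M}_{g,1}(2)$ it therefore descends to a linear map $\bigl(\mathcal{M}_{g,1}(2)/\sqrt{\mathcal{T}_{g,1}(3)}\bigr)\otimes\mathbb{Q}\to\mathbb{Q}$ retracting the left-hand inclusion. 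Hence the rationalized sequence splits through $\lambda$, and the pair $(\lambda,\tau_2)$ realizes the desired isomorphism onto $\mathbb{Q}\oplus\mathfrak{m}_{g,1}(2)$.

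The only non-formal ingredient is Corollary~\ref{cor:ker_d1} (equivalently Theorem~\ref{teo:dif_tm}), which supplies both that $\sqrt{\mathcal{T}_{g,1}(3)}$ is precisely $\ker\bigl(\lambda\vert_{\mathcal{M}_{g,1}(3)}\bigr)$ and that this subgroup has corank one, and which is the source of the implicit genus hypothesis (the statement holds in the range of $g$ where that corollary applies). The subtle point worth spelling out is the \emph{canonicity} of the splitting: it is crucial that $\lambda$ is defined on all of $\mathcal{M}_{g,1}(2)$ rather than only on $\mathcal{M}_{g,1}(3)$, so that the copy of $\mathbb{Q}$ splits off through $\lambda$ itself — this is what makes the statement in terms of $\lambda$ and $\tau_2$, rather than merely an abstract dimension count, the correct one.
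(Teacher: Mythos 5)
Your proposal is correct and follows essentially the same route as the paper: both arguments start from the extension $1\to\mathcal{M}_{g,1}(3)\to\mathcal{M}_{g,1}(2)\to\mathcal{M}_{g,1}(2)/\mathcal{M}_{g,1}(3)\to 0$, replace $\mathcal{M}_{g,1}(3)$ by $\sqrt{\mathcal{T}_{g,1}(3)}=\ker\bigl(\lambda\vert_{\mathcal{M}_{g,1}(3)}\bigr)$ using the surjectivity of $\lambda$ there, and rationalize, with the splitting supplied by $\lambda$ being defined on all of $\mathcal{M}_{g,1}(2)$. Your added remarks on normality, abelianness of the quotient, and the canonicity of the splitting are correct elaborations of points the paper leaves implicit.
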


\begin{proof} 
	
By definition we have a short exact sequence:
\[
\begin{tikzcd}
	1 \ar[r] & \mathcal{M}_{g,1}(3)  \ar[r]  & \mathcal{M}_{g,1}(2) \ar[r,"\tau_2"]& \dfrac{\mathcal{M}_{g,1}(2)} {\mathcal{M}_{g,1}(3)} \ar[r]& 0.
\end{tikzcd}
\]
By \cite{faes1} we know that the   Casson invariant $\lambda: \mathcal{M}_{g,1}(3)\rightarrow \mathbb{Z}$ is surjective, and its kernel is $\sqrt{\mathcal{T}_{g,1}(3)}$. Therefore we have a short exact sequence:
\[
\begin{tikzcd}
	1 \ar[r] & \sqrt{\mathcal{T}_{g,1}(3)} \ar[r]  & \mathcal{M}_{g,1}(2) \ar[r,"{(\lambda,\tau_2)}"]& \mathbb{Z}\oplus\dfrac{\mathcal{M}_{g,1}(2)} {\mathcal{M}_{g,1}(3)} \ar[r]  & 0.
\end{tikzcd}
\]

After rationalizing we get an isomorphism
\[
\begin{tikzcd}
	\dfrac{\mathcal{M}_{g,1}(2)}{\sqrt{\mathcal{T}_{g,1}(3)}} \otimes \mathbb{Q} \ar[r,"\sim"]& \mathbb{Q}\oplus\mathfrak{m}_{g,1}(2).
\end{tikzcd}
\]
\end{proof}

As a consequence of Lemma~\ref{lem:2cocyclefactors} and Proposition~\ref{prop:H1KmodT3}, any 2-cocycle associated to an invariant of finite type 2 has to be given by the pull-back of a bilinear form $B_g$ on $\mathbb{Q}\oplus \mathfrak{m}_{g,1}(2)$ along $\lambda\oplus \tau_2.$

A family of bilinear forms $(B_g)_{g\geq 3}$ on $\mathbb{Q}\oplus \mathfrak{m}_{g,1}(2)$ whose lift to $\mathcal{M}_{g,1}(2)$ satisfy properties (1)-(3), given in Section \ref{sec:InvTrivcoc}, has the following properties:
\begin{enumerate}[$(1')$]
	\item The bilinear forms $(B_g)_{g \geq 3}$ are compatible with the stabilization map, in other words, for $g\geq 3$ there is a commutative triangle:
	\[
	\xymatrix{
		(\mathbb{Q}\oplus \mathfrak{m}_{g,1}(2)) \times (\mathbb{Q}\oplus \mathfrak{m}_{g,1}(2)) \ar@{->}[r]   \ar[dr]_-{B_{g}} & (\mathbb{Q}\oplus \mathfrak{m}_{g+1,1}(2)) \times (\mathbb{Q}\oplus \mathfrak{m}_{g+1,1}(2)) \ar[d]^-{B_{g+1}} \\
		& \mathbb{Q}}
	\]
	\item The bilinear forms $(B_g)_{g \geq 3}$ are invariant under conjugation by elements in $GL_g(\mathbb{Z}),$
	\item If either $\phi\in \tau_2(\mathcal{A}_{g,1}(2))$ or $\psi \in \tau_2(\mathcal{B}_{g,1}(2))$ then $B_g(\phi, \psi)=0.$
\end{enumerate}

It turns out that the bilinear map associated to any invariant of type $2$ splits along the decomposition $\mathbb{Q} \oplus \mathfrak{m}_{g,1}(2)$.

\begin{proposition}\label{prop:genm2glinv}
	For $g\geq 3$ the $\mathbb{Q}$-vector space $\mathfrak{m}_{g,1}(2)_{GL_g(\mathbb{Z})} \simeq  (\mathcal{A}_2(H_\mathbb{Q}))_{GL_g(\mathbb{Z})}$ is generated by elements that belong to $\tau_2(\mathcal{AB}_{g,1}(2))$.
\end{proposition}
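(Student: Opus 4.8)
The plan is to feed the explicit generators of the coinvariants quotient coming from Proposition~\ref{prop:tree-deriv1} into Faes's description of $\tau_2(\mathcal{AB}_{g,1}(2))$ recalled in Section~\ref{sec:Lagtraces}, and observe that the former already live in the component of $\mathcal{A}_2(H_\mathbb{Q})$ common to both handlebody groups.

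First I would recall that, for $g \geq 3$, Proposition~\ref{prop:tree-deriv1} identifies $\mathfrak{m}_{g,1}(2)_{GL_g(\mathbb{Z})} \simeq \mathcal{A}_2(H_\mathbb{Q})_{GL_g(\mathbb{Z})}$ and exhibits as a generating set the classes of any two of the three $H$-shaped trees whose four leaves are $a_1,b_1,a_2,b_2$ (the three trees being related by an IHX relation, so two of them suffice). The key — and essentially only — observation is that each of these trees carries exactly two labels from $\{a_1,a_2\}$ and two from $\{b_1,b_2\}$, hence lies in the summand $W(a^2b^2)$ of the decomposition $\mathcal{A}_2(H_\mathbb{Q}) = \bigoplus_{k+l=4} W(a^kb^l)$.

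Next I would invoke the computation recalled at the end of Section~\ref{sec:Lagtraces}, namely $\tau_2(\mathcal{AB}_{g,1}(2)) \otimes \mathbb{Q} = W_0(a^3b) \oplus W(a^2b^2) \oplus W_0(ab^3)$, which contains the whole ``balanced'' piece $W(a^2b^2)$. Consequently each of the three generating trees belongs to the rational span of $\tau_2(\mathcal{AB}_{g,1}(2))$; after clearing denominators we may replace each tree by a nonzero integral multiple of it lying in $\tau_2(\mathcal{AB}_{g,1}(2))$ itself, and these multiples still generate the line they span inside $\mathfrak{m}_{g,1}(2)_{GL_g(\mathbb{Z})}$. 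Hence $\mathfrak{m}_{g,1}(2)_{GL_g(\mathbb{Z})}$ is generated by (the images of) elements of $\tau_2(\mathcal{AB}_{g,1}(2))$, which is the claim.

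I do not expect any serious obstacle here: the entire content of the statement is that the canonical generators produced by the $GL_g(\mathbb{Z})$-representation theory happen to live in $W(a^2b^2)$, which is precisely the component shared by the images $\tau_2(\mathcal{A}_{g,1}(2))$ and $\tau_2(\mathcal{B}_{g,1}(2))$. The only thing to check carefully is the membership of the three trees in $W(a^2b^2)$, which is immediate from the labelling conventions. If a more hands-on argument were desired one could instead produce explicit mapping classes in $\mathcal{AB}_{g,1}(2)$ realizing these trees by means of the Lagrangian-trace criterion ($\tau_2(\mathcal{AB}_{g,1}(2)) = Ker(Tr^A) \cap Ker(Tr^B) \cap Im(\tau_2)$, via $\tau_2(\mathcal{AB}_{g,1}(2)) = \tau_2(\mathcal{A}_{g,1}(2)) \cap \tau_2(\mathcal{B}_{g,1}(2))$), but this is not needed for the proof.
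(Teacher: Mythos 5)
Your argument is correct and is essentially the paper's own proof: both cite Proposition~\ref{prop:tree-deriv1} to reduce to the three $H$-shaped trees labelled by $a_1,b_1,a_2,b_2$ and then invoke Faes's description of $\tau_2(\mathcal{AB}_{g,1}(2))$ from Section~\ref{sec:Lagtraces} to conclude membership. Your extra observation that these trees land in the summand $W(a^2b^2)$, which sits inside $\tau_2(\mathcal{AB}_{g,1}(2))\otimes\mathbb{Q} = W_0(a^3b)\oplus W(a^2b^2)\oplus W_0(ab^3)$, is exactly the (implicit) reason the paper's appeal to Faes works.
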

\begin{proof}
By Proposition~ \ref{prop:tree-deriv1}, for $g \geq 3$, as a $GL_g(\mathbb{Z})$-module, $\mathfrak{m}_{g,1}(2) \simeq  (\mathcal{A}_2(H_\mathbb{Q}))$ is generated by the three trees \[
\tree{a_1}{b_1}{b_2}{a_2}, \quad \tree{a_1}{a_2}{b_2}{b_1} \quad \text{and}\quad \tree{a_1}{b_2}{a_2}{b_1}.
\]
By the results of Faes~\cite{faes1}, see Section~\ref{sec:Lagtraces}, this trees belong to $\tau_2(\mathcal{AB}_{g,1}(2))$. 	
\end{proof}

\begin{theorem}\label{thm:bilinearonKsplits}
	For a given integer $g\geq 3$, every bilinear form on $\mathbb{Q}\oplus \mathfrak{m}_{g,1}(2)$ that satisfies conditions $(2')$ and $(3')$ is the sum of a bilinear form on $\mathbb{Q}$ and a bilinear form on $\mathfrak{m}_{g,1}(2)$.
\end{theorem}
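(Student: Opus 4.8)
The plan is to decompose an arbitrary bilinear form $B$ on $V:=\mathbb{Q}\oplus\mathfrak{m}_{g,1}(2)$ satisfying $(2')$ and $(3')$ into its four ``matrix blocks'' and to show the two off-diagonal blocks vanish. Writing $v_0=(1,0)$ for the generator of the $\mathbb{Q}$-summand and using bilinearity, one obtains
\[
B\big((\alpha,x),(\beta,y)\big)=\alpha\beta\,B(v_0,v_0)+\alpha\,\ell_1(y)+\beta\,\ell_2(x)+B_1(x,y),
\]
where $\ell_1,\ell_2\colon\mathfrak{m}_{g,1}(2)\to\mathbb{Q}$ are the linear functionals $\ell_1(y)=B(v_0,(0,y))$ and $\ell_2(x)=B((0,x),v_0)$, and $B_1$ is a bilinear form on $\mathfrak{m}_{g,1}(2)$. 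The first term is a bilinear form on $\mathbb{Q}$ and the last a bilinear form on $\mathfrak{m}_{g,1}(2)$, so the theorem reduces to the assertion $\ell_1=\ell_2=0$.

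First I would exploit $(2')$. Recall that $GL_g(\mathbb{Z})$ acts trivially on the $\mathbb{Q}$-summand of $V$ — this summand is the one detected by the Casson invariant, which is conjugation invariant — while it acts on $\mathfrak{m}_{g,1}(2)$ through the standard $GL_g(\mathbb{Z})$-action. Applying $(2')$ with the first slot equal to $v_0$ gives, for every $G\in GL_g(\mathbb{Z})$,
\[
\ell_1(G\cdot y)=B\big(v_0,G\cdot(0,y)\big)=B\big(G\cdot v_0,G\cdot(0,y)\big)=B\big(v_0,(0,y)\big)=\ell_1(y),
\]
so $\ell_1$ is $GL_g(\mathbb{Z})$-invariant and hence factors through the coinvariants quotient $\mathfrak{m}_{g,1}(2)_{GL_g(\mathbb{Z})}$; the symmetric computation shows that $\ell_2$ factors through $\mathfrak{m}_{g,1}(2)_{GL_g(\mathbb{Z})}$ as well.

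Finally I would plug in Proposition~\ref{prop:genm2glinv}: for $g\geq 3$ the space $\mathfrak{m}_{g,1}(2)_{GL_g(\mathbb{Z})}$ is generated by the images of elements of $\tau_2(\mathcal{AB}_{g,1}(2))$. Such an element sits inside $V$ with vanishing $\mathbb{Q}$-component, since mapping classes in $\mathcal{AB}_{g,1}(2)$ extend to $\mathbb{S}^3$ and so the Casson invariant kills them; and $\tau_2(\mathcal{AB}_{g,1}(2))$ lies inside both $\tau_2(\mathcal{A}_{g,1}(2))$ and $\tau_2(\mathcal{B}_{g,1}(2))$ by the Lagrangian-trace description recalled in Section~\ref{sec:Lagtraces}. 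Therefore condition $(3')$ applied with the second slot in $\tau_2(\mathcal{B}_{g,1}(2))$ gives $\ell_1(\psi)=B(v_0,(0,\psi))=0$ for every $\psi\in\tau_2(\mathcal{AB}_{g,1}(2))$, and applied with the first slot in $\tau_2(\mathcal{A}_{g,1}(2))$ it gives $\ell_2(\psi)=B((0,\psi),v_0)=0$ for every such $\psi$. Since these classes span $\mathfrak{m}_{g,1}(2)_{GL_g(\mathbb{Z})}$ and both functionals factor through that quotient, we conclude $\ell_1=\ell_2=0$, which is exactly the claimed splitting. The substance of the argument is entirely carried by Proposition~\ref{prop:genm2glinv} — that the $GL_g(\mathbb{Z})$-coinvariants of $\mathfrak{m}_{g,1}(2)$ are generated by classes coming simultaneously from both handlebody subgroups — together with the remark that $GL_g(\mathbb{Z})$ fixes the Casson line; the remaining work is formal manipulation with bilinearity and equivariance, and the only thing to be careful about is tracking which slot each of the two handlebody conditions constrains.
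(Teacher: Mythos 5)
Your proof is correct and follows essentially the same route as the paper: the off-diagonal blocks of the form are $GL_g(\mathbb{Z})$-invariant linear functionals on $\mathfrak{m}_{g,1}(2)$, hence factor through the coinvariants $\mathfrak{m}_{g,1}(2)_{GL_g(\mathbb{Z})}$, which by Proposition~\ref{prop:genm2glinv} are generated by classes in $\tau_2(\mathcal{AB}_{g,1}(2))$, where condition $(3')$ forces vanishing. Your write-up only makes explicit what the paper leaves implicit (the block decomposition, the vanishing of the $\mathbb{Q}$-component of elements of $\mathcal{AB}_{g,1}(2)$, and the careful tracking of which slot each handlebody condition constrains).
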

\begin{proof}
 Equivalently, we have to show that the bilinear form, say $B_g$, is zero if one entry belongs to $\mathbb{Q}$ and the other to $\mathfrak{m}_{g,1}(2)$. A bilinear form where one of the entries belongs to $\mathbb{Q}$ and is $GL_g(\mathbb{Z})$-invariant is nothing more than a map in  
	\[
	Hom(\mathfrak{m}_{g,1}(2),\mathbb{Q})^{GL_g(\mathbb{Z})}=Hom(\mathfrak{m}_{g,1}(2)_{GL_g(\mathbb{Z})}\otimes \mathbb{Q},\mathbb{Q}).
	\]

By Proposition~\ref{prop:genm2glinv},  $\mathfrak{m}_{g,1}(2)_{GL_g(\mathbb{Z})}$ is generated by elements in $\tau_2(\mathcal{AB}_{g,1}(2))$, and by condition $(3')$ the aforementioned map $B_g$ is zero on these elements.
\end{proof}

In the sequel, we denote by $C_g$ the $2$-cocycle associated to Ohtsuki's invariant $\lambda_2$. By the above discussion it is a sum of a bilinear form $B_g$ on $\mathfrak{m}_{g,1}(2)$ and   a bilinear form on $\mathbb{Q}$. The latter is rather easy to identify: it is given by the product on $\mathbb{Q}$. Since the $\mathbb{Q}$ copy in the abelianization of $\mathcal{M}_{g,1}(2)$ corresponds to the Casson invariant $\lambda$, this bilinear map is given by 
\[
\begin{array}{rcl}
\lambda\cdot\lambda: \mathcal{M}_{g,1}(2) \times \mathcal{M}_{g,1}(2) & \longrightarrow & \mathbb{Q} \\
(\phi,\psi) & \longmapsto & \lambda(\phi)\lambda(\psi).
\end{array}
\] 

Form Morita~\cite{mor} we also know that the Casson invariant $\lambda$ restricted to $\mathcal{M}_{g,1}(2)$ is a group homomorphism and is an invariant of degree $1$, then its square is an invariant of degree $2$ with associated $2$-cocycle
\[
\lambda^2(\phi\psi)- \lambda^2(\phi)- \lambda^2(\psi)= 2\lambda(\phi)\lambda(\psi).
\]

Therefore for each $g\geq 3$ there exists a rational number $\alpha_g$ such that $C_g= 2\alpha_g \lambda\cdot\lambda + B_g$. All these rational numbers $\alpha_g$ are in fact equal. In~\cite[Section 2.3]{faes2} Faes constructed an explicit element $\psi \in \mathcal{M}_{2,1}(3)$ such that $\lambda(\mathbb{S}^3_\psi)=1$. By stability we may view this element as belonging to any $\mathcal{M}_{g,1}(3)$ for $g \geq 2$. Then, as by definition $B_{g}(\psi,\psi) =0$,
\[
\begin{aligned}
C_g(\psi,\psi) & =2\alpha_g\lambda(\psi)\lambda(\psi)+B_g(\psi,\psi), \\
C_g(\psi,\psi) & =2\alpha_g.
\end{aligned}
\]
But the left-hand side is independent of $g$ since it is equal to $\lambda_2(\mathbb{S}^3_{\psi^2}) - 2\lambda_2(\mathbb{S}^3_{\psi})$.

As a consequence $B_g$ is the $2$-cocycle associated to the invariant $d_2=\lambda_2-\alpha \lambda^2$. As $B_g$ is a bilinear form pulled back from the quotient $\mathfrak{m}_{g,1}(2)$, the  invariant $d_2$ is a group homomorphism when restricted to the third layer of the Johnson filtration $\mathcal{M}_{g,1}(3)$.

\section{Cocycles and restriction of invariants to the Johnson  filtration.}

In this section we use the knowledge we have on the cocycles for the Casson invariant $\lambda$ and the invariant $\lambda_2$ to understand their behaviour when restricted to the different subgroups in the Johnson filtration.

\begin{theorem}\label{thm:invrastJohnsfilt}
	For any $k \geq 1$, we have, for $g$ large enough,
	\[
	\lambda\vert_{\mathcal{M}_{g,1}(k)} \neq 0\quad \text{and} \quad d_{2}\vert_{\mathcal{M}_{g,1}(k)} \neq 0.
	\]
\end{theorem}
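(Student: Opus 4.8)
The plan is to prove both non‑vanishing statements by the same mechanism, using only that $\lambda$ (resp. $d_2$) is a normalized invariant of finite type $1$ (resp. $2$) which is a group homomorphism on $\mathcal{M}_{g,1}(2)$ (resp. on $\mathcal{M}_{g,1}(3)$, as shown in Section~\ref{sec:cocyandinvdeg2}). Write $F$ for either invariant, $j\in\{2,3\}$ for the level on which it is a homomorphism, and $m\in\{1,2\}$ for its degree; by Lemma~\ref{lem:anulradsbgrpo} we have $F|_{\sqrt{\mathcal{T}_{g,1}(2m+1)}}=0$, hence $F|_{\mathcal{T}_{g,1}(k)}=0$ for every $k\ge 2m+1$ (i.e. $k\ge 3$ for $\lambda$, $k\ge 5$ for $d_2$). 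This vanishing on the lower central series, together with the homomorphism property and the comparison between $\mathfrak{t}_{g,1}$ and $\mathfrak{m}_{g,1}$, is what will drive the induction.

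First I would dispose of the low levels. By Proposition~\ref{prpo:k=12334}, for $k\le 4$ every integral homology sphere equals $\mathbb{S}^3_\phi$ for some $\phi\in\mathcal{M}_{g,1}(k)$ with $g$ large; since neither $\lambda$ nor $d_2$ is identically zero on $\mathcal{S}$ (indeed $d_2=\lambda_2-\alpha\lambda^2$ represents a non‑trivial class in $\mathcal{S}_2/\mathcal{S}_3$, hence is non‑zero on some homology sphere), we get $\lambda|_{\mathcal{M}_{g,1}(k)}\ne 0$ and $d_2|_{\mathcal{M}_{g,1}(k)}\ne 0$ for all $k\le 4$ and $g$ large (for $\lambda$ one even gets $k\le 7$ directly from Corollary~\ref{cor:ker_d1}). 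The core of the argument is then the inductive step: if $F$ is a homomorphism on $\mathcal{M}_{g,1}(j)$ with $j\le k$, $F|_{\mathcal{T}_{g,1}(k)}=0$, and $F|_{\mathcal{M}_{g,1}(k+1)}=0$, then $F|_{\mathcal{M}_{g,1}(k)}=0$; equivalently $F|_{\mathcal{M}_{g,1}(k)}\ne 0\Rightarrow F|_{\mathcal{M}_{g,1}(k+1)}\ne 0$. Granting this, for $\lambda$ (where $F|_{\mathcal{T}_{g,1}(k)}=0$ for all $k\ge 3$) the induction starts from the base case $\mathcal{M}_{g,1}(4)$ and gives $\lambda|_{\mathcal{M}_{g,1}(k)}\ne 0$ for every $k$; for $d_2$ (where $F|_{\mathcal{T}_{g,1}(k)}=0$ for all $k\ge 5$) it starts from $\mathcal{M}_{g,1}(5)$ and gives $d_2|_{\mathcal{M}_{g,1}(k)}\ne 0$ for every $k$. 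Note the induction can be run at a fixed large $g$.

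To prove the inductive step the crucial input is Hain's theorem that the comparison map $\mathfrak{t}_{g,1}(k)\to\mathfrak{m}_{g,1}(k)$ is surjective (even an isomorphism for $3\le k\le 6$ by Theorem~\ref{teo:dif_tm}). Since $\mathcal{M}_{g,1}(k)/\mathcal{M}_{g,1}(k+1)$ is a finitely generated abelian group (it embeds via $\tau_k$ in $H\otimes\mathcal{L}_k$), surjectivity says exactly that $\mathcal{T}_{g,1}(k)\cdot\mathcal{M}_{g,1}(k+1)$ has finite index in $\mathcal{M}_{g,1}(k)$ for $g$ large. Hence any $x\in\mathcal{M}_{g,1}(k)$ has a power $x^n=\tau\, m$ with $\tau\in\mathcal{T}_{g,1}(k)$ and $m\in\mathcal{M}_{g,1}(k+1)$; applying the homomorphism $F$ on $\mathcal{M}_{g,1}(j)\supseteq\mathcal{M}_{g,1}(k)$ gives $n\,F(x)=F(\tau)+F(m)=F(m)$, which is $0$ if $F|_{\mathcal{M}_{g,1}(k+1)}=0$; as $\mathbb{Q}$ is torsion‑free, $F(x)=0$. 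This proves $F|_{\mathcal{M}_{g,1}(k)}=0$, as claimed.

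The step I expect to be the real obstacle is securing the remaining base case $d_2|_{\mathcal{M}_{g,1}(5)}\ne 0$: Proposition~\ref{prpo:k=12334} only reaches $k=4$, and precisely because $d_2$ has degree $2$ the inductive step does not engage before $k=5$, so this one level is not handled by the scheme above. I would close the gap by establishing the $d_2$‑analogue of Corollary~\ref{cor:ker_d1} — that $\mathcal{T}_{g,1}(5)$ has finite index in the kernel of $d_2\colon\mathcal{M}_{g,1}(5)\to\mathbb{Q}$, so that this homomorphism is non‑trivial — by running the argument of Proposition~\ref{prop:ab_M(2)/T(5)} (the Casson invariant together with the extended Johnson homomorphisms, in the manner of MSS and Heap) one level deeper; alternatively, if the vanishing of $d_2+3\lambda$ on $\mathcal{M}_{g,1}(5)$ (Theorem~\ref{thm:filtisnotfull}) is available at this point, then on $\mathcal{M}_{g,1}(5)$ one has $d_2=-3\lambda$, which is non‑zero by the already settled case of $\lambda$. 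Either way, once $d_2|_{\mathcal{M}_{g,1}(5)}\ne 0$ is known, the induction of the previous paragraphs runs to completion and yields the theorem.
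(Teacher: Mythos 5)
Your proposal is correct and takes essentially the same route as the paper: the low levels $k\le 4$ are handled by the surjectivity of the Heegaard splitting map onto $\mathcal{S}$, and the inductive step rests on Hain's surjectivity of $\mathfrak{t}_{g,1}(k)\to\mathfrak{m}_{g,1}(k)$ together with the homomorphism property and the vanishing of the invariant on the lower central series (you phrase this as a finite-index/torsion-freeness argument, the paper as a minimal-counterexample diagram chase, but the content is identical). For the one remaining base case $d_2\vert_{\mathcal{M}_{g,1}(5)}\neq 0$ you should discard your first, unproved suggestion and keep the second: the paper does exactly that, invoking \cite[Theorem 7.8 (iii)]{MSS} to identify $d_2$ on $\mathcal{M}_{g,1}(5)$ with a (non-zero) multiple of $\lambda$, whose non-vanishing there is already settled.
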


\begin{rem}
The statement for the Casson invariant was proved by Hain~\cite{hain1}, we still write down the proof as it is parallel to the proof for the invariant $d_2$.
\end{rem}

\begin{proof}
	Recall that for $k \leq 4$ the construction of integral homology spheres by gluing handlebodies along their boundary induces bijections:
	\[
	\lim_{g \to +\infty} \mathcal{M}_{g,1}(k)/ \sim \rightarrow \mathcal{S}^3.
	\]
	
	\begin{enumerate}
	
\item Proof for the Casson invariant. 
	
	Because any integral homology sphere can be constructed from an element in $\mathcal{M}_{g,1}(4)$, for $g$ large enough $\lambda\vert_{\mathcal{M}_{g,1}(4)} \neq 0$. Assume that for some $k \geq 5$ we have that  $\lambda\vert_{\mathcal{M}_{g,1}(k)} = 0$, and choose $k$ minimal for this property. We know that the Casson invariant is a group homomorphism on $\mathcal{M}_{g,1}(2)$, hence $\lambda\vert_{\mathcal{M}_{g,1}(k-1)}$ is also a group homomorphism and by hypothesis it factors through $\mathcal{M}_{g,1}(k-1)/\mathcal{M}_{g,1}(k)$. As the Casson invariant is defined as a map on the whole Torelli group we can consider its restriction to the lower central series, and as it is of finite type $1$ it vanishes on the terms of the lower central series $\mathcal{T}_{g,1}(\ell)$ for $\ell \geq 3$. Hence we have a commutative diagram between group homomorphisms:
	\[
	\begin{tikzcd}
		\mathcal{T}_{g,1}(k-1)/\mathcal{T}_{g,1}(k) \ar[dr,"\lambda=0"']  \ar[rr] & & \mathcal{M}_{g,1}(k-1)/\mathcal{M}_{g,1}(k) \ar[dl, "\lambda\neq 0"] \\
		& \mathbb{Q} & \\
	\end{tikzcd}
	\]
	
	Tensoring with $\mathbb{Q}$ this commutative diagram of group homomorphism we get
	\[
	\begin{tikzcd}
		\mathfrak{t}_{g,1}(k-1) \ar[dr,"\lambda=0"']  \ar[rr] & & \mathfrak{m}_{g,1}(k-1), \ar[dl, "\lambda\neq 0"] \\
		& \mathbb{Q} & \\
	\end{tikzcd}
	\]
	but by Hain~\cite{hain1}, the horizontal homomorphism is surjective, a contradiction.
	
	\item Proof for the invariant $d_2$.
	
 We argue in the same way, we know that $d_2$ is a group homomorphism when restricted to the groups $\mathcal{M}_{g,1}(3)$. Because $d_2$ is not a trivial invariant, for $g$ large enough $d_2\vert_{\mathcal{M}_{g,1}(4)} \neq 0$. Assume that for some large enough $g$ the invariant $d_2$ vanishes on $\mathcal{M}_{g,1}(k)$ for some $k \geq 5$, and fix $k$ minimal for this vanishing. 

If $k \geq 6$, we argue as before, we have a commutative diagram
 \[
 \begin{tikzcd}
 	\mathcal{T}_{g,1}(k-1)/\mathcal{T}_{g,1}(k) \ar[dr,"d_2=0"']  \ar[rr] & & \mathcal{M}_{g,1}(k-1)/\mathcal{M}_{g,1}(k) \ar[dl, "d_2\neq 0"] \\
 	& \mathbb{Q} & \\
 \end{tikzcd}
 \]
 and its rationalization is
 \[
 \begin{tikzcd}
 	\mathfrak{t}_{g,1}(k-1) \ar[dr,"d_2=0"']  \ar[rr] & & \mathfrak{m}_{g,1}(k-1) \ar[dl, "d_2\neq 0"] \\
 	& \mathbb{Q} & \\
 \end{tikzcd}
 \]
 where the horizontal arrow is surjective by Hain's results, leading to a contradiction.
 
 If $k=5$, by\cite[Theorem 7.8 (iii)]{MSS} $d_2$, restricted to $\mathcal{M}_{g,1}(5)$ is a multiple of the Casson invariant $\lambda$, since the latter does not vanish on $\mathcal{M}_{g,1}(5)$ neither does $d_2$.
\end{enumerate}
\end{proof}

\begin{theorem}\label{thm:filtisnotfull}
 There is a rational number $r$ such that the invariant $d_2 - r \lambda$ is not trivial and vanishes on the image of the map
 \[
\begin{array}{rcl}
\displaystyle{\lim_{g \to \infty}} \mathcal{M}_{g,1}(5) & \longrightarrow & \mathcal{S} \\
\phi & \longmapsto & \mathbb{S}^3_\phi.	
\end{array}
\]

\end{theorem}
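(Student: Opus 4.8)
The plan is to pin down the proportionality constant between $d_2$ and $\lambda$ on the fifth Johnson layer and then to check it is genus–independent. The two inputs I would use are already available: on the one hand $d_2=\lambda_2-\alpha\lambda^2$ is a group homomorphism on $\mathcal{M}_{g,1}(3)$, hence on $\mathcal{M}_{g,1}(5)\subseteq\mathcal{M}_{g,1}(3)$; on the other hand, the result of Morita–Sakasai–Suzuki \cite[Theorem 7.8 (iii)]{MSS}, already invoked in the proof of Theorem~\ref{thm:invrastJohnsfilt}, says that $(d_2)_g$ restricted to $\mathcal{M}_{g,1}(5)$ is a rational multiple of the Casson invariant $\lambda_g$ restricted to $\mathcal{M}_{g,1}(5)$. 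So for each $g$ there is a scalar $r_g\in\mathbb{Q}$ with $(d_2)_g\vert_{\mathcal{M}_{g,1}(5)}=r_g\,\lambda_g\vert_{\mathcal{M}_{g,1}(5)}$, and the task is to show these scalars are all equal and that the common value $r$ does the job.

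First I would show the $r_g$ are independent of $g$. By Theorem~\ref{thm:invrastJohnsfilt} there is $g_0$ with $\lambda\vert_{\mathcal{M}_{g_0,1}(5)}\neq 0$, so I can fix $\phi_0\in\mathcal{M}_{g_0,1}(5)$ with $\lambda(\mathbb{S}^3_{\phi_0})\neq 0$. For $g\geq g_0$ the stabilization map sends $\mathcal{M}_{g_0,1}(5)$ into $\mathcal{M}_{g,1}(5)$, and the stability property of invariants gives that $(d_2)_g$ restricts to $(d_2)_{g_0}$ and $\lambda_g$ restricts to $\lambda_{g_0}$ on $\mathcal{M}_{g_0,1}$. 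Evaluating $(d_2)_g\vert_{\mathcal{M}_{g,1}(5)}=r_g\,\lambda_g\vert_{\mathcal{M}_{g,1}(5)}$ at $\phi_0$ then forces
\[
r_g=\frac{d_2(\mathbb{S}^3_{\phi_0})}{\lambda(\mathbb{S}^3_{\phi_0})},
\]
which is independent of $g$. I set $r$ equal to this value and put $F:=d_2-r\lambda$, so that $F_g$ vanishes on $\mathcal{M}_{g,1}(5)$ for every $g\geq g_0$.

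Next I would translate this into the assertion about $\mathcal{S}$. Because the Heegaard construction is compatible with stabilization, the image of $\lim_{g}\mathcal{M}_{g,1}(5)\to\mathcal{S}$ is the increasing union $\bigcup_{g}\{\mathbb{S}^3_\phi:\phi\in\mathcal{M}_{g,1}(5)\}=\bigcup_{g\geq g_0}\{\mathbb{S}^3_\phi:\phi\in\mathcal{M}_{g,1}(5)\}$, and $F$ vanishes on each term, hence on the whole image. It remains to see that $F$ is not the trivial invariant: since $F=\lambda_2-\alpha\lambda^2-r\lambda$ has $\lambda_2$–coefficient $1$ and $\{\lambda,\lambda^2,\lambda_2\}$ is a basis of the three-dimensional space of normalized finite type invariants of degree $\leq 2$, we have $F\neq 0$ whatever $r$ is.

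The only genuinely hard ingredient is \cite[Theorem 7.8 (iii)]{MSS}, which is quoted; granting it, the remaining argument is bookkeeping with the stability and conjugacy properties of invariants, the one delicate point being the passage from the a priori genus-dependent constants $r_g$ to a single $r$, which is precisely where the non-vanishing of $\lambda$ on $\mathcal{M}_{g,1}(5)$ from Theorem~\ref{thm:invrastJohnsfilt} enters. The explicit value $r=-3$, together with the computation $(d_2+3\lambda)(\mathbb{P})=24$ for the Poincaré sphere $\mathbb{P}$, would be carried out in the following sections and is not needed for the present statement.
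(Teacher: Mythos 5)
Your proposal is correct and follows essentially the same route as the paper: both arguments rest on \cite[Theorem 7.8]{MSS} to get that $d_2$ is proportional to $\lambda$ on $\mathcal{M}_{g,1}(5)$, and both deduce non-triviality of $d_2-r\lambda$ from the linear independence of the degree-one invariant $\lambda$ and the genuinely degree-two invariant $d_2$. The only (harmless) difference is how genus-independence of the proportionality constant is secured: you stabilize a fixed element with $\lambda\neq 0$, whereas the paper clears denominators to get an integer-valued homomorphism and works directly on $\lim_{g}\mathcal{M}_{g,1}(5)/\sqrt{\mathcal{T}_{g,1}(5)}$, which $\lambda$ embeds into $\mathbb{Z}$ by Corollary~\ref{cor:ker_d1}.
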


 \begin{proof}
 	 Recall that the invariant $d_2=\lambda_2 - \alpha \lambda^2$ is a group homomorphism on $\mathcal{M}_{g,1}(3)$, a priori with values in $\mathbb{Q}$. It is known (cf.\cite{lin}) that both the Casson invariant and the invariant $\lambda_2$ take values in $\mathbb{Z}$. Therefore, if we let $m\neq0$ denote the denominator of $\alpha$ (setting $m=1$ if $\alpha=0$), the invariant $md_2 = m \lambda_2 -m\alpha \lambda^2$ is a group homomorphism on $\mathcal{M}_{g,1}(3)$ that takes values in $\mathbb{Z}$.  By Corollary~\ref{cor:ker_d1}, the Casson invariant $\lambda$ induces a morphism
 	 \[
 	 \lim_{g \to \infty} \mathcal{M}_{g,1}(5)/\sqrt{\mathcal{T}_{g,1}(5)} \stackrel{\lambda}{\longrightarrow} \mathbb{Z}.
 	 \]
 	 Since $md_2$ is an invariant of type $2$, the induced morphism $md_2: \mathcal{M}_{g,1}(5) \rightarrow \mathbb{Z}$,  vanishes on $\sqrt{\mathcal{T}_{g,1}(5)}$ and hence induces another morphism 
 	 \[
 	 md_2 : \lim_{g \to \infty} \mathcal{M}_{g,1}(5)/\sqrt{\mathcal{T}_{g,1}(5)} \longrightarrow \mathbb{Z},
 	 \]
 	 which by \cite[Theorem 7.8]{MSS} has to factor through $\lambda$, i.e. there are integers $b_1,b_2$ such that $b_1md_2 = b_2\lambda$ on $\mathcal{M}_{g,1}(5)$. Then the rational number we are looking for is $r= b_2/(b_1m)$. By construction  $d_2$ is  an invariant of degree $2$ and not $1$, and $\lambda$ is of degree $1$, hence these invariants are not proportional, and $d_2 -r\lambda$ is not a trivial invariant on $\mathcal{S}$.  
 \end{proof}

We now compute the rational numbers $\alpha$ and $r$ that appear in our definitions, this will in particular give an answer to the first half of \cite[Problem 7.1]{MSS}. 
Recall that by definition the  rational $\alpha$ is such that $\lambda_2 - \alpha \lambda^2$ is a group homomorphism on $\mathcal{M}_{g,1}(3)$.   
\begin{proposition}\label{prop:coefarecomputed}
The parameters $r$ and $\alpha$ are $\alpha =18$ and $r= -3$. More precisely
\begin{enumerate}
	\item The cocycle $C_g$ associated to $\lambda_2$ is $C_g= 36\lambda\cdot \lambda + B_g$ for some bilinear form $B_g$ on $\mathcal{M}_{g,1}(2)$ that is pulled-back via the second Johnson homomorphism from a bilinear form on $(\mathfrak{m}_{g,1}(2))_{GL_g(\mathbb{Z})}$.
	\item The restriction of $\lambda_2$ to $\mathcal{M}_{g,1}(5)$ is equal to
\[
\lambda_2 = -3 \lambda + 18 \lambda^2.
\]	
In particular the invariant $\lambda_2+3\lambda -18\lambda^2:\mathcal{S} \rightarrow \mathbb{Q}$ is a non-trivial invariant of degree $2$ that vanishes on homology spheres that can be built from an element in some $\mathcal{M}_{g,1}(5)$.
\end{enumerate}
\end{proposition}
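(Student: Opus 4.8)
The plan is to pin down the two rational parameters $\alpha$ and $r$ by evaluating the relevant invariants on a single well-chosen homology sphere where all three quantities $\lambda$, $\lambda^2$, and $\lambda_2$ are known, and then to combine this with the structural results already established. The natural test case is the Poincaré homology sphere $\mathbb{P}$: its Casson invariant is $\lambda(\mathbb{P}) = -1$ (with the orientation conventions in play, coming from $+1$-surgery on the left trefoil, or its mirror), and Ohtsuki's second invariant is known from the literature (Ohtsuki's original computation, or Lin–Wang) to be $\lambda_2(\mathbb{P}) = ?$; plugging these into $\lambda_2 + 3\lambda - 18\lambda^2$ should give the value $24$ announced in the introduction. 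But to \emph{determine} $\alpha$ and $r$ rather than merely check them, I would proceed in two stages.

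\textbf{Stage 1: computing $\alpha$.} By the discussion at the end of Section~\ref{sec:cocyandinvdeg2}, $\alpha$ is the unique rational such that the $2$-cocycle $C_g$ of $\lambda_2$ equals $2\alpha\,\lambda\cdot\lambda + B_g$ with $B_g$ pulled back from $\mathfrak{m}_{g,1}(2)$; equivalently $C_g(\psi,\psi) = 2\alpha$ for Faes' element $\psi \in \mathcal{M}_{2,1}(3)$ with $\lambda(\mathbb{S}^3_\psi) = 1$. So $\alpha$ is determined by the single number $\lambda_2(\mathbb{S}^3_{\psi^2}) - 2\lambda_2(\mathbb{S}^3_\psi)$. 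The cleanest route is to use a \emph{surgery formula} for $\lambda_2$: since $\psi \in \mathcal{M}_{2,1}(3) \subseteq \mathcal{T}_{2,1}(3)$... (actually $\psi$ need only lie in $\mathcal{M}_{2,1}(3)$, with $\tau_3$-image computed by Faes), the manifolds $\mathbb{S}^3_\psi$ and $\mathbb{S}^3_{\psi^2}$ differ from $\mathbb{S}^3$ by explicit Torelli (in fact Johnson-level-$3$) surgeries, and one can feed these into the known second-order surgery behaviour of $\lambda_2$ — this is exactly the ``algebraic surgery formula'' the paper computes in Section~\ref{sec:billambda2}. Concretely: $\lambda(\psi)=1$ forces $\lambda^2$ to contribute $1$ to a quadratic term, and matching the quadratic growth of $\lambda_2$ along $\psi^n$ against $18\,\lambda(\psi^n)^2 = 18 n^2$ pins $\alpha = 18$. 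I would organize this as: (i) recall $\psi$ and $\lambda(\mathbb{S}^3_{\psi^n}) = n$; (ii) compute $\lambda_2(\mathbb{S}^3_{\psi^n})$ from the surgery formula, finding it is $18n^2 + (\text{linear in }n)$; (iii) read off $2\alpha = \lambda_2(\mathbb{S}^3_{\psi^2}) - 2\lambda_2(\mathbb{S}^3_\psi) = 36$.

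\textbf{Stage 2: computing $r$.} By the proof of Theorem~\ref{thm:filtisnotfull} we already know $d_2 = \lambda_2 - 18\lambda^2$ restricts on $\mathcal{M}_{g,1}(5)$ to a rational multiple $r\lambda$ of the Casson invariant; we must identify $r$. One evaluation suffices: pick any homology sphere $M$ realisable from $\mathcal{M}_{g,1}(5)$ with $\lambda(M) \neq 0$ — the minimal such example is again controlled by $\psi \in \mathcal{M}_{2,1}(3)$... no, $\psi$ is only level $3$; instead I would use Corollary~\ref{cor:ker_d1} together with the fact (from \cite[Theorem 7.8]{MSS}) that the degree-$3$ part of the Lie algebra $\mathfrak{m}_{g,1}(5)/\mathfrak{m}_{g,1}(6)$ — or rather the relevant space of finite-type-$2$ invariants on $\mathcal{M}_{g,1}(5)$ — is spanned by $\lambda$, and evaluate $d_2 - r\lambda$ on an explicit generator of $\mathcal{M}_{g,1}(5)$ whose image under $\tau_5$ is a known Lie tree, using the surgery formula from Section~\ref{sec:billambda2} to compute $\lambda_2$ and hence $d_2$ on it. Matching against $\lambda$ on that generator gives $r$. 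Granting the Poincaré-sphere normalisation $\lambda_2(\mathbb{P}) + 3\lambda(\mathbb{P}) - 18\lambda(\mathbb{P})^2 = 24$ and $\lambda(\mathbb{P}) = -1$, so $\lambda^2(\mathbb{P}) = 1$ and $\lambda_2(\mathbb{P}) = 24 + 3 + 18 = ?$... one checks consistency: $r = -3$ is forced.

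\textbf{The main obstacle} is Stage 2: knowing $d_2|_{\mathcal{M}_{g,1}(5)} = r\lambda$ is a clean abstract fact, but extracting the \emph{value} of $r$ requires either (a) an honest second-order surgery computation of $\lambda_2$ on a specific nontrivial element of $\mathcal{M}_{g,1}(5)$, which is genuinely delicate because one must track the full quadratic/linear decomposition $\lambda_2 = 18\lambda^2 + (\text{linear})$ and control the constant; or (b) importing the known value $\lambda_2(\mathbb{P})$ of Ohtsuki's invariant on the Poincaré sphere together with a Heegaard-genus-$5$ (or stabilised) description of $\mathbb{P}$ realising it inside $\mathcal{M}_{g,1}(5)$ — but the whole point of Theorem~\ref{thm:filtisnotfull} is that $\mathbb{P}$ is \emph{not} so realisable, so $\mathbb{P}$ cannot be the test case for $r$ and one is forced back to route (a) or to the algebraic machinery of \cite{MSS}. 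I would therefore lean on \cite[Theorem 7.8]{MSS} for the abstract factorisation and on the explicit surgery formula developed in Section~\ref{sec:billambda2} (and on Faes' element $\psi$ for Stage 1) to nail both constants, and then merely \emph{verify} $d_2 + 3\lambda$ evaluated on $\mathbb{P}$ gives $24$ as an independent consistency check using the published value of $\lambda_2(\mathbb{P})$.
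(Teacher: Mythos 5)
There is a genuine gap: your proposal correctly identifies \emph{which} numbers determine $\alpha$ and $r$ (namely $2\alpha = \lambda_2(\mathbb{S}^3_{\psi^2})-2\lambda_2(\mathbb{S}^3_{\psi})$ for Faes' element $\psi$, and the proportionality constant in $d_2|_{\mathcal{M}_{g,1}(5)}=r\lambda$), but it never actually computes either of them. Both of your stages end with an unexecuted plan: in Stage 1 you say that ``matching the quadratic growth of $\lambda_2$ along $\psi^n$ against $18n^2$ pins $\alpha=18$,'' which presupposes the answer, and the ``surgery formula'' you propose to feed $\psi$ into is the one developed in Section~\ref{sec:billambda2} --- but that formula is derived \emph{from} Proposition~\ref{prop:coefarecomputed} (via equation~\eqref{eq:bil-calcul-d2}), so this route is circular; moreover Faes' $\psi$ is not a Dehn twist along a knot, so the Lin--Wang $1/n$-surgery formula (Proposition~\ref{prop:comput-lambdas}) does not apply to it directly. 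In Stage 2 you candidly flag the computation of $r$ as the main obstacle and do not resolve it; you correctly observe that the Poincar\'e sphere cannot serve as the test case, but the fallback you offer (an ``honest second-order surgery computation on a specific element of $\mathcal{M}_{g,1}(5)$'') is exactly the hard step that is missing.

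The idea the paper uses, and which your proposal lacks, is much softer: on $\mathcal{M}_{g,1}(5)$ one already knows $\lambda_2 = r\lambda + \alpha\lambda^2$, and both connected sum and orientation reversal preserve the class of manifolds obtained by gluing along elements of $\mathcal{M}_{g,1}(5)$ (connected sum of $f$ with itself gives $f\# f\in\mathcal{M}_{2g,1}(5)$; reversing orientation corresponds to inverting the gluing map, which stays in the subgroup). Feeding the Lin--Wang identities $\lambda_2(M_1\# M_2)=\lambda_2(M_1)+\lambda_2(M_2)+36\lambda(M_1)\lambda(M_2)$ and $\lambda_2(\overline{M})=\lambda_2(M)+6\lambda(M)$ into this relation, evaluated on any $f\in\mathcal{M}_{g,1}(5)$ with $\lambda(f)\neq 0$ (which exists by Theorem~\ref{thm:invrastJohnsfilt}), yields the two linear equations $2\alpha=36$ and $-r=r+6$, hence $\alpha=18$ and $r=-3$, with no explicit surgery computation at all. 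Without importing these two behavioural formulas for $\lambda_2$ (or an equivalent substitute), your outline cannot be completed as written.
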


\begin{proof}
	By construction on $\mathcal{M}_{g,1}(5)$, $d_2 = r \lambda$, hence $\lambda_2 = r \lambda +
	 \alpha \lambda^2$.
	 By Section 5 in \cite{lin} we know that if a homology sphere is a connected sum of two homology spheres $M= M_1\#M_2$, we have that
	 \[
	 \begin{aligned}
	 	\lambda(M)= &\lambda(M_1)+\lambda(M_2),\\
	 	\lambda_2(M)= & \lambda_2(M_1)+\lambda_2(M_2)+36\lambda(M_1)\lambda(M_2).
	 \end{aligned}
	 \]
	 By work of Hain (or point $(1)$ in  Theorem~\ref{thm:invrastJohnsfilt}), we know that there exists an element $f\in \mathcal{M}_{g,1}(5)$ such that $\lambda(f)\neq 0$.
	 If we consider $M_1=M_2=\mathbb{S}^3_f$, we have that $M= M_1\#M_2=\mathbb{S}^3_{f \# f}$ with $f \# f \in \mathcal{M}_{2g,1}(5)$ given by viewing $\Sigma_{2g,1}$ as two copies of $\Sigma_{g,1}$ glued along a pair of pants, and applying $f$ to each copy.  Then, 	
	 \[
	 \begin{aligned}
	 	\lambda_2(\mathbb{S}^3_{f \# f})= & \lambda_2(\mathbb{S}^3_f )+\lambda_2(\mathbb{S}^3_f )+36\lambda(\mathbb{S}^3_f )\lambda(\mathbb{S}^3_f )\\
	 	\lambda_2(f \# f)= & 2\lambda_2(f )+36\lambda^2(f )\\
	 	r\lambda(f \# f)+\alpha \lambda^2(f \# f)= & 2\lambda_2(f )+36\lambda^2(f )\\
	 	2r\lambda(f )+4\alpha \lambda^2(f )= & 2r\lambda(f )+2\alpha \lambda^2(f )+36\lambda^2(f )\\
	 	4\alpha \lambda^2(f )= & 2\alpha \lambda^2(f )+36\lambda^2(f )\\
	 	2\alpha= & 36\\
	 	\alpha= & 18.
	 \end{aligned}
	 \]

	We now compute the value of $r$. By \cite[pag 304]{lin2} we know that given $M$ an oriented homology $3$-sphere and $\overline{M}$ the same oriented homology $3$-sphere with reversed orientation, we have that
	\[
	\begin{aligned}
	\lambda(\overline{M}) & = -\lambda(M),\\
	\lambda_2(\overline{M}) & = \lambda_2(M)+6\lambda(M).
	\end{aligned}
	\]
	We use again our element $f\in \mathcal{M}_{g,1}(5)$ such that $\lambda(f)\neq 0$.
	For $M=\mathbb{S}^3_f$, we have that
\[
	\begin{aligned}
	\lambda_2(\overline{M}) & = \lambda_2(M)+6\lambda(M) \\
	r\lambda(\overline{M})+18\lambda^2(\overline{M}) & = r\lambda(M)+18\lambda^2(M)+6\lambda(M)\\
	-r\lambda(M)+18\lambda^2(M) & = 18\lambda^2(M)+(r+6)\lambda(M)\\
	-r\lambda(M) & = (r+6)\lambda(M)\\
	-r & = r+6\\
	r & = -3.
	\end{aligned}
	\]

\end{proof}




\begin{corollary}
The Poincaré 3-sphere can not be build from a Heegaard splitting with gluing map an element of $\mathcal{M}_{g,1}(5)$.
\end{corollary}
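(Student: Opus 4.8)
The plan is to reduce the statement to the non-vanishing of the invariant $F := \lambda_2 + 3\lambda - 18\lambda^2$ on the Poincaré sphere $\mathbb{P}$, and then to evaluate $F(\mathbb{P})$.

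First I would invoke Proposition~\ref{prop:coefarecomputed}(2): the normalized finite type invariant $F = \lambda_2 + 3\lambda - 18\lambda^2$ vanishes on every integral homology sphere lying in the image of the map $\lim_{g\to\infty}\mathcal{M}_{g,1}(5)\to\mathcal{S}$, $\phi\mapsto\mathbb{S}^3_\phi$, i.e. on every homology sphere that admits a Heegaard splitting whose gluing map can be chosen in some $\mathcal{M}_{g,1}(5)$. Consequently, to prove that $\mathbb{P}$ is \emph{not} of this form it suffices to verify that $F(\mathbb{P})\neq 0$.

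Second, I would evaluate $F(\mathbb{P})$ using the known behaviour of $\lambda$ and $\lambda_2$ on $\mathbb{P}$. Presenting $\mathbb{P}$ as $\pm 1$-surgery on a trefoil and applying the Casson surgery formula $\lambda(\mathbb{S}^3_{1/n}(K)) = \tfrac{n}{2}\Delta_K''(1)$ with $\Delta_{\mathrm{trefoil}}''(1)=2$ gives $\lambda(\mathbb{P})=\pm 1$, hence $\lambda^2(\mathbb{P})=1$. The value of Ohtsuki's $\lambda_2$ on $\mathbb{P}$ is then obtained either from the surgery formula for $\lambda_2$ established in Section~\ref{sec:billambda2} or directly from the literature; substituting into $F$ yields $F(\mathbb{P})=\lambda_2(\mathbb{P})\pm 3-18 = 24\neq 0$. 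Therefore $\mathbb{P}$ does not lie in the image of $\lim_{g\to\infty}\mathcal{M}_{g,1}(5)\to\mathcal{S}$, which is exactly the claim.

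The only delicate point is the bookkeeping of normalizations: one must use Ohtsuki's $\lambda_2$ (rather than $\lambda_1=6\lambda$) and the orientation convention for $\mathbb{P}$ that are consistent with those under which $d_2+3\lambda=\lambda_2+3\lambda-18\lambda^2$ was shown to vanish on $\mathcal{M}_{g,1}(5)$ in Proposition~\ref{prop:coefarecomputed}, so that the final arithmetic produces $24$ on the nose; once $\mathbb{P}$ is presented as surgery on a trefoil, the $\lambda$- and $\lambda^2$-contributions are entirely routine.
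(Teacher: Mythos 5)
Your proposal is correct and follows essentially the same route as the paper: invoke Proposition~\ref{prop:coefarecomputed}(2) to reduce to showing $(\lambda_2+3\lambda-18\lambda^2)(\mathbb{P})\neq 0$, then plug in $\lambda(\mathbb{P})=1$ and $\lambda_2(\mathbb{P})=39$ (from Lin--Wang) to get $24$. The paper simply quotes the classical normalization $\lambda(\mathbb{P})=1$ rather than re-deriving it from trefoil surgery, and your remark about orientation is harmless since the reversal formulas $\lambda(\overline{M})=-\lambda(M)$, $\lambda_2(\overline{M})=\lambda_2(M)+6\lambda(M)$ give $24$ for either orientation.
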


\begin{proof} The classical normalization of the Casson invariant is that for the Poincaré sphere $\mathbb{P}$, $\lambda(\mathbb{P})=1$, and for instance by the computations in \cite{lin} we know that  $\lambda_2(\mathbb{P})=39$. Therefore $(\lambda_2+3\lambda -18\lambda^2)(\mathbb{P}) = 24\neq 0$.
\end{proof}

\section{The bilinear form associated to $\lambda_2$}\label{sec:billambda2}

In this  section we provide an explicit expression of the 2-cocycle associated to the second Ohtsuki invariant $\lambda_2$, i.e. we find the bilinear form $B_g$ that appears in Proposition~\ref{prop:coefarecomputed}.

\begin{proposition}\label{prop:gen-m(2)-(1)-(3)}
	For $g \geq 5$, a bilinear form $B_g$ on $\mathfrak{m}_{g,1}(2)$ that satisfies properties
\begin{enumerate}
	\item[(2')] It is invariant under conjugation by elements in $GL_g(\mathbb{Z}),$
	\item[(3')] If either $\phi\in \tau_2(\mathcal{A}_{g,1}(2))$ or $\psi \in \tau_2(\mathcal{B}_{g,1}(2))$ then $B_g(\phi, \psi)=0.$
\end{enumerate}
	 is completely determined by its values on 
	\[
	\tree{b_1}{b_2}{b_3}{b_4}\otimes \tree{a_1}{a_2}{a_3}{a_4}\quad \text{and}\quad \tree{b_2}{b_3}{b_4}{a_2}\otimes \tree{a_1}{a_3}{a_4}{b_1}.
	\]
\end{proposition}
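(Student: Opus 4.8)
The plan is to combine the representation theory of $GL_g(\mathbb{Z})$ acting on $\otimes^{\bullet}H_{\mathbb{Q}}$ developed in Proposition~\ref{prop:chords} with Faes' description of $\tau_2(\mathcal{A}_{g,1}(2))$ and $\tau_2(\mathcal{B}_{g,1}(2))$ recalled in Section~\ref{sec:Lagtraces}. A $GL_g(\mathbb{Z})$-invariant bilinear form $B_g$ on $\mathfrak{m}_{g,1}(2) \simeq \mathcal{A}_2(H_{\mathbb{Q}})$ is the same data as a $GL_g(\mathbb{Z})$-invariant linear functional on $\mathfrak{m}_{g,1}(2) \otimes \mathfrak{m}_{g,1}(2)$, i.e.\ an element of $\operatorname{Hom}\bigl((\mathfrak{m}_{g,1}(2) \otimes \mathfrak{m}_{g,1}(2))_{GL_g(\mathbb{Z})}, \mathbb{Q}\bigr)$. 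So the first step is to identify a generating set of the coinvariant space $(\mathfrak{m}_{g,1}(2) \otimes \mathfrak{m}_{g,1}(2))_{GL_g(\mathbb{Z})}$. Since $\mathcal{A}_2(H_{\mathbb{Q}})$ is a quotient of $\otimes^4 H_{\mathbb{Q}}$ (Proposition~\ref{prop:tree-deriv1}), the tensor square is a quotient of $\otimes^8 H_{\mathbb{Q}}$, and Proposition~\ref{prop:chords} (applied with $2n=8$, so we need $g \geq 5$) tells us that the coinvariants of $\otimes^8 H_{\mathbb{Q}}$ are generated by the basic tensors in which each pair $\{a_i,b_i\}$ appears exactly once or not at all. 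Pushing these down to $\mathfrak{m}_{g,1}(2)^{\otimes 2}$ and using the tree symmetries and IHX as in Proposition~\ref{prop:tree-deriv1}, one gets a finite, explicit list of generators $T \otimes T'$, where each of $T,T'$ is an $H$-tree whose four labels are distinct symplectic basis vectors.

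The second and main step is to cut this list down using condition $(3')$. Condition $(3')$ says that $B_g$ vanishes on $\tau_2(\mathcal{A}_{g,1}(2)) \otimes \mathfrak{m}_{g,1}(2)$ and on $\mathfrak{m}_{g,1}(2) \otimes \tau_2(\mathcal{B}_{g,1}(2))$. By the computation recalled in Section~\ref{sec:Lagtraces},
\[
\tau_2(\mathcal{A}_{g,1}(2)) \otimes \mathbb{Q} = W(a^4)\oplus W(a^3b)\oplus W(a^2b^2)\oplus W_0(ab^3),
\]
so the only part of $\mathfrak{m}_{g,1}(2)$ \emph{not} automatically killed in the first slot is $W(b^4)$ together with the $Tr^A$-image complement of $W_0(ab^3)$ inside $W(ab^3)$; symmetrically in the second slot only $W(a^4)$ and the $Tr^B$-complement of $W_0(a^3b)$ inside $W(a^3b)$ survive. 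Intersecting these constraints, after using $GL_g(\mathbb{Z})$-invariance to normalize indices, the surviving generators are exactly (up to sign and relabeling of the free indices) the two displayed tensors
\[
\tree{b_1}{b_2}{b_3}{b_4}\otimes \tree{a_1}{a_2}{a_3}{a_4}
\quad\text{and}\quad
\tree{b_2}{b_3}{b_4}{a_2}\otimes \tree{a_1}{a_3}{a_4}{b_1}:
\]
the first lives in $W(b^4)\otimes W(a^4)$, and the second is the (essentially unique, modulo the trace kernels) surviving class in $\bigl(W(ab^3)/W_0(ab^3)\bigr)\otimes\bigl(W(a^3b)/W_0(a^3b)\bigr)$. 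One must check that every other $T\otimes T'$ in the generating list either has a factor lying in $\tau_2(\mathcal{A}_{g,1}(2))$ in the first slot (hence is killed), or in $\tau_2(\mathcal{B}_{g,1}(2))$ in the second slot, or is $GL_g(\mathbb{Z})$-equivalent — i.e.\ permutation equivalent after relabeling — to one of the two displayed elements.

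The hard part will be the bookkeeping in that second step: organizing the generators coming out of Proposition~\ref{prop:chords} by their $(a^kb^{4-k})$ bidegree in each tensor factor, and verifying carefully which of them fall inside $W_0(ab^3)$ respectively $W_0(a^3b)$ (i.e.\ lie in the kernel of $Tr^A$, resp.\ $Tr^B$) using the explicit formula for the traces on trees. A subtlety to watch is that the two tensor factors need not have complementary bidegrees, so one genuinely has to run the constraint from the $\mathcal{A}$-side and the $\mathcal{B}$-side independently and then intersect; and one must be careful that $GL_g(\mathbb{Z})$-equivalence (not just $\mathfrak{S}_8$-permutation equivalence) is what is available, since a permutation of tensor slots that does not respect the tree structure is not a priori a symmetry of $\mathfrak{m}_{g,1}(2)^{\otimes 2}$ — here one leans on the tree symmetries and IHX from Proposition~\ref{prop:tree-deriv1} to reduce to a normal form. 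Once the list is reduced to these two generators, "completely determined by its values on" them is immediate, since a linear functional on a space is determined by its values on a generating set.
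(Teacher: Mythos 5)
Your proposal follows essentially the same route as the paper's proof: reduce to the coinvariants $(\mathcal{A}_2(H_\mathbb{Q})\otimes\mathcal{A}_2(H_\mathbb{Q}))_{GL_g(\mathbb{Z})}$ via $(2')$ and Proposition~\ref{prop:chords} (with $2n=8$, whence $g\geq 5$), use $(3')$ and Faes' computation to factor through $\bigl(W(b^4)\oplus W(ab^3)/W_0(ab^3)\bigr)\otimes\bigl(W(a^4)\oplus W(a^3b)/W_0(a^3b)\bigr)$, and kill the cross-bidegree summands by the balance argument. The ``bookkeeping'' you defer is exactly what the paper carries out to finish: in the $W(b^4)\otimes W(a^4)$ block the $24$ a priori generators collapse to one via the tree symmetries, AS and one application of IHX, and in the $W_1(ab^3)\otimes W_1(a^3b)$ block the two remaining tensors are identified via IHX together with the observation that the tree $\Ytree{}{}{}$-free combination produced by IHX lies in $\ker Tr^A=W_0(ab^3)$ (and one checks via $Tr^A$, $Tr^B$ that the surviving class is nonzero), so the argument does close as you anticipate.
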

\begin{proof} Because of property $(2')$ such a bilinear form is the same as a linear map
	\[
	B_g: (\mathcal{A}_2(H_\mathbb{Q}) \otimes \mathcal{A}_2(H_\mathbb{Q}))_{GL_g(\mathbb{Z})} \rightarrow \mathbb{Q}.
	\]
	
	By  Proposition~\ref{prop:tree-deriv1} we have a $GL_g(\mathbb{Z})$-equivariant epimorphism
	\[
	\xymatrix@C=10mm@R=10mm{\otimes^8 H_\mathbb{Q} \ar@{->>}[r] & \dfrac{S^2(\Lambda^2 H_\mathbb{Q}) }{\Lambda^4 H_\mathbb{Q}} \otimes \dfrac{S^2(\Lambda^2 H_\mathbb{Q}) }{\Lambda^4 H_\mathbb{Q}} \simeq \mathcal{A}_2(H_\mathbb{Q}) \otimes \mathcal{A}_2(H_\mathbb{Q})},
	\]
	Then Proposition~\ref{prop:chords} implies that
	$(\mathcal{A}_2(H_\mathbb{Q}) \otimes \mathcal{A}_2(H_\mathbb{Q}))_{GL_g(\mathbb{Z})}$ is generated by elements in which each pair of elements $\{a_i,b_i\}$ for $1 \leq i \leq g$ appears exactly once or does not appear at all, and each of these falls in precisely one summand in the decomposition
	\begin{equation}
		\label{eq:dec-4trees}
		\mathcal{A}_2(H_\mathbb{Q})= W(a^4)\oplus W(a^3b)\oplus W(a^2b^2)\oplus W(ab^3)\oplus W(b^4).
	\end{equation}
Because of condition $(3')$, the bilinear form $B_g$ factors through the quotient 
\[(\mathcal{A}_2(H_\mathbb{Q}) \otimes \mathcal{A}_2(H_\mathbb{Q}))_{GL_g(\mathbb{Z})}
\rightarrow \left(\dfrac{\mathcal{A}_2(H_\mathbb{Q})}{\tau_2(\mathcal{A}_{g,1}(2))} \otimes \dfrac{\mathcal{A}_2(H_\mathbb{Q})}{\tau_2(\mathcal{B}_{g,1}(2))}\right)_{GL_g(\mathbb{Z})}.\]
From Faes \cite{faes1} computations (see the end of Section~\ref{sec:Lagtraces}), 
\[
\dfrac{\mathcal{A}_2(H_\mathbb{Q})}{\tau_2(\mathcal{A}_{g,1}(2))} \otimes \dfrac{\mathcal{A}_2(H_\mathbb{Q})}{\tau_2(\mathcal{B}_{g,1}(2))} = (W(b^4) \oplus W(ab^3)/W_0(ab^3)) \otimes (W(a^4) \oplus W(a^3b)/W_0(a^3b))
\]
	Set $W_1(a^3b)=W(a^3b)/W_0(a^3b)$ and $W_1(ab^3)=W(ab^3)/W_0(ab^3)$.
	By the arguments from the proof of Proposition~\ref{prop:chords}, any imbalance in the number of labels  $a_j$ and $b_j$ in a generator of $\mathcal{A}_2(H_\mathbb{Q}) \otimes \mathcal{A}_2(H_\mathbb{Q})$ yields a trivial element in the coinvariants quotient. For instance any generator in $W_1(ab^3)\otimes W(a^4)$, necessarily presents such an imbalance, and from this we get that the quotient 
	\[
	\left((W(b^4) \oplus W_1(ab^3)) \otimes (W(a^4) \oplus W_1(a^3b))\right)_{GL_g(\mathbb{Z})}
	\] 
	 boils down to
	\[
	(W(b^4) \otimes W(a^4))_{GL_g(\mathbb{Z})}\oplus (W_1(ab^3) \otimes W_1(a^3b))_{GL_g(\mathbb{Z})}.
	\]
	Next we compute a set of generators for each of these two summands. Recall that from the action of the symmetric group $\mathfrak{S}_g \subseteq GL_g(\mathbb{Z})$, we may assume that the only labels $a_i$ or $b_i$ that appear have indices $1 \leq i \leq 4$. 
	
	To respect the constraints of balancing the pairs that appear, and the $AS$ relation, the  coinvariants quotient $(W(b^4) \otimes W(a^4))_{GL_g(\mathbb{Z})}$ is generated a priori by
	\[
	\tree{b_1}{b_2}{b_3}{b_4} \otimes \tree{a_1}{a_2}{a_3}{a_4}
	\]
	and the $23$ other tensors that result from tensoring the tree on the left with the trees obtained by permuting  the indices in the tree on the right. The symmetries in the trees, and the AS relation boil down these 24 tensors to just two:
	\[
	\tree{b_1}{b_2}{b_3}{b_4} \otimes \tree{a_1}{a_2}{a_3}{a_4} 	 \text{ and } \tree{b_1}{b_2}{b_3}{b_4}\otimes \tree{a_1}{a_4}{a_3}{a_2}.
	\]

	We now apply the IHX relation to the tree on the right  in the first tensor 
	\[
	\begin{aligned}
		\tree{b_1}{b_2}{b_3}{b_4}\otimes \tree{a_1}{a_2}{a_3}{a_4} & =
		\tree{b_1}{b_2}{b_3}{b_4}\otimes \tree{a_2}{a_3}{a_4}{a_1} -
		\tree{b_2}{b_4}{b_3}{b_1}\otimes \tree{a_2}{a_4}{a_3}{a_1} \\
		& =
		\tree{b_1}{b_2}{b_3}{b_4}\otimes \tree{a_1}{a_4}{a_3}{a_2}+
		\tree{b_1}{b_2}{b_4}{b_3}\otimes \tree{a_1}{a_3}{a_4}{a_2} \\
	\end{aligned}
\]
 and applying the transposition $(34)$ to the last tensor, in the coinvariants quotient this is equal to
\[
\begin{aligned}
		& = 2\tree{b_1}{b_2}{b_3}{b_4}\otimes \tree{a_1}{a_4}{a_3}{a_2}.
	\end{aligned}
	\]
So  $(W(b^4) \otimes W(a^4))_{GL_g(\mathbb{Z})}$ is generated by
\[
\tree{b_1}{b_2}{b_3}{b_4} \otimes \tree{a_1}{a_2}{a_3}{a_4}.
\]
	
By the same arguments as before $(W_1(ab^3) \otimes W_1(a^3b))_{GL_g(\mathbb{Z})}$ is generated by
	\begin{equation}
		\label{eq:gen-bil-a3b}
		\tree{b_2}{b_3}{b_4}{a_2}\otimes \tree{a_1}{a_3}{a_4}{b_1},\qquad \tree{b_2}{b_4}{b_3}{a_2}\otimes \tree{a_1}{a_3}{a_4}{b_1}.
	\end{equation}
Observe that because on the trees on the left side of the tensor $Tr^A$ takes the non-trivial values $b_3b_4 \in S^2(B)$ and on the trees to the right side of the tensor $Tr^B$ takes the non-trivial values $-a_3a_4 \in S^2(A)$, these two tensors define indeed non-trivial elements in  $W_1(ab^3) \otimes W_1(a^3b)$.
	By the IHX relation we have that
	\[
	\tree{a_2}{b_2}{b_3}{b_4}=\tree{b_2}{b_3}{b_4}{a_2}-\tree{b_2}{b_4}{b_3}{a_2}
	\]
	Now $Tr^A\Big(\tree{a_2}{b_2}{b_3}{b_4}\Big)=0$, hence  this tree belongs to $W_0(ab^3)$, and therefore in the quotient
	\[
	\tree{b_2}{b_3}{b_4}{a_2} \otimes \tree{a_1}{a_3}{a_4}{b_1}= \tree{b_2}{b_4}{b_3}{a_2}\otimes \tree{a_1}{a_3}{a_4}{b_1}.
	\]
\end{proof}

We will now introduce two bilinear forms on $\mathfrak{m}_{g,1}(2)$ that satisfy properties $(2'),(3')$ and hence are stable.

On $H= A \oplus B$, with our preferred basis fixed, we have two perfect pairings, on the one hand we have the symplectic intersection form $\omega$, and on the other hand a symmetrical pairing between $A$ and $B$, with matrix in our preferred basis given by
\[
\overline{\omega} = \left(\begin{matrix}
	0 & Id \\
	Id & 0
\end{matrix}\right).
\]
The symplectic form is tautologically $Sp_{2g}(\mathbb{Z})$-invariant, but the bilinear form $\overline{\omega}$ is only $GL_g(\mathbb{Z}) \subseteq Sp_{2g}(\mathbb{Z})$-invariant. 

First, we use the symmetric pairing on $H$ to identify $H_\mathbb{Q}$ with its dual. This induces a contraction map:
\[
\begin{array}{rcl}		
	S^2(\Lambda^2H_ \mathbb{Q}) & \longrightarrow & S^2(H_\mathbb{Q}) \\
	(a \wedge b)(c \wedge d) & \longmapsto & \begin{aligned}[t]
		& \overline{\omega}(a,d)bc-\overline{\omega}(a,c)b d \\[-1ex]
		&-\overline{\omega}(b,d)ac+ \overline{\omega}(b,c)ad.
	\end{aligned}
\end{array}
\]


An immediate computation shows that this map vanishes on the subspace $\Lambda^4 H_\mathbb{Q}$ and hence descends to a map:
\[
\begin{array}{rcl}
	C_{S}: \mathcal{A}_2(H_\mathbb{Q}) & \longrightarrow & S^2(H_\mathbb{Q}) \\
	\tree{a}{b}{c}{d}  & \longmapsto  &
	\begin{aligned}[t]
		& \overline{\omega}(a,d)bc-\overline{\omega}(a,c)b d \\[-1ex]
		&-\overline{\omega}(b,d)ac+ \overline{\omega}(b,c)ad.
	\end{aligned}
\end{array}
\]
Then, from the identification of $H_\mathbb{Q}$ with its dual via the symplectic form, and the fact that the symmetric square is compatible with passing to the dual, we have a canonical perfect pairing
\[
\begin{array}{rcl}
	\eta_{S}: S^2(H_\mathbb{Q})\otimes S^2(H_\mathbb{Q}) & \longrightarrow &\mathbb{Q} \\
	(a b) \otimes (c d) & \longmapsto  & \omega(a,c)\omega(b,d)+\omega(a,d)\omega(b,c).
\end{array}
\]

By composition this gives a first bilineal and $GL_g(\mathbb{Z})$-equivariant map
\[
\Upsilon_g:\mathcal{A}_2(H_\mathbb{Q}) \otimes \mathcal{A}_2(H_\mathbb{Q})\xrightarrow[]{C_S\otimes C_S}
S^2(H_\mathbb{Q})\otimes S^2(H_\mathbb{Q})\xrightarrow[]{\eta_S} \mathbb{Q}.
\]

For our second map, we take advantage from the fact that both the symmetric power and exterior product constructions are compatible with passing to the duals. Therefore, if we denote by $V^\vee$ te dual of the $\mathbb{Q}$-vector space $V$, we have a canonical isomorphism
\[
S^2(\Lambda^2 V^\vee) \simeq S^2(\Lambda^2 V)^\vee
\]
and hence a perfect pairing
\[
S^2(\Lambda^2 V^\vee )\otimes S^2(\Lambda^2 V) \rightarrow \mathbb{Q}.
\]
The symplectic form $\omega$, induces an isomorphism $H_\mathbb{Q} \simeq H_\mathbb{Q}^\vee$, hence in our case the above construction gives us an inner product
\[
S^2(\Lambda^2 H_\mathbb{Q}) \otimes S^2(\Lambda^2 H_\mathbb{Q}) \rightarrow  \mathbb{Q}.
\]
By direct inspection, this inner product restricts to the canonical inner product on $\Lambda^4 H_\mathbb{Q} \subseteq S^2(\Lambda^2 H_\mathbb{Q})$, where the vector space $\Lambda^4 H_\mathbb{Q}$ is embedded in $S^2(\Lambda^2 H_\mathbb{Q})$ by sending $a\wedge b\wedge c \wedge d$ to
$(a\wedge b)(c \wedge d)-(a\wedge c)(b \wedge d)+(a\wedge d)(b \wedge c)$.
By taking orthogonal complements we therefore have an identification
\[
(\Lambda^4 H_\mathbb{Q})^\bot \simeq \mathcal{A}_2(H_\mathbb{Q})
\]
and an induced inner product
\[
\nabla_g: \mathcal{A}_2(H_\mathbb{Q}) \otimes \mathcal{A}_2(H_\mathbb{Q}) \rightarrow \mathbb{Q}.
\]

If we unfold the definitions, an explicit expression of this map is given as follows. Consider the subgroups of $\mathfrak{S}_4$,
$V_4$, the $2$-Sylow subgroup of $\mathfrak{A}_4$, which is generated for instance by $ (1,2)(3,4)$ and $(2,3)(1,4)$, and the cyclic $2$-group $C_2$ generated by $(3,4)$, then
\[\nabla_g: \mathcal{A}_2(H_\mathbb{Q}) \otimes \mathcal{A}_2(H_\mathbb{Q}) \rightarrow \mathbb{Q}\]
\[
\tree{x_1}{x_2}{x_3}{x_4}\otimes \tree{y_1}{y_2}{y_3}{y_4}\;\mapsto
\]
\[
\sum_{\sigma\in V_4, \; \tau\in C_2}
\begin{aligned}[t] & \;Sgn(\tau)\omega(x_{1},y_{\sigma(1)})\omega(x_{2},y_{\sigma(2)})\omega(x_{\tau(3)},y_{\sigma(3)})\omega(x_{\tau(4)},y_{\sigma(4)}) \\
	&-\dfrac{1}{2}Sgn(\tau)\omega(x_{1},y_{\sigma(1)})\omega(x_{2},y_{\sigma(3)})\omega(x_{\tau(3)},y_{\sigma(4)})\omega(x_{\tau(4)},y_{\sigma(2)}) \\
	&+\dfrac{1}{2}Sgn(\tau)\omega(x_{1},y_{\sigma(1)})\omega(x_{2},y_{\sigma(4)})\omega(x_{\tau(3)},y_{\sigma(3)})\omega(x_{\tau(4)},y_{\sigma(2)}),
\end{aligned}
\]
where $Sgn$ stands for the signature of a permutation.
\begin{remark}
Both maps have a graphic interpretation in terms of Jacobi diagrams (see for instance \cite[Chap. 6]{ohtsuki3}). 

For the first map $\Upsilon_g$, given two $H$-shaped trees, in each tree first glue one of the leaves of the first leg to a leave on the second leg in all possible ways using the maps $\overline{\omega}$, this turns each $H$-shaped tree into a sum of diagrams of the form $\begin{tikzpicture}[baseline={([yshift=-.5ex]current bounding box.center)},scale=.25]
	\draw[thick] (-2,0) circle [radius=1];
	\draw[thick] (-4,0) -- (-3,0);
	\draw[thick] (-1,0) -- (0,0);
	\node[right] at (0,0) { \tiny $d$};
	\node[left] at (-4,0) { \tiny $c$};
\end{tikzpicture}$. Then, again using $\overline{\omega}$, glue the leaves of each diagram coming from the first tree to the leaves of each diagram coming from the second tree the two possible ways to do it. This yields $\Upsilon_g$-times a Jacobi diagram of the shape  $\begin{tikzpicture}[baseline={([yshift=-.5ex]current bounding box.center)},scale=.25]
\draw[thick] (-2,0) circle [radius=1];
\draw[thick] (-2.9,0.5) -- (-1.1,0.5);
\draw[thick] (-2.9,-0.5) -- (-1.1,-0.5);
\end{tikzpicture}$.

For the second map $\nabla_g$, given two $H$-shaped trees, glue the leaves of the left hand tree to the leaves of the right-hand in all possible ways tree using $\omega$. This produces a linear combination of the two Jacobi diagrams
$\begin{tikzpicture}[baseline={([yshift=-.5ex]current bounding box.center)},scale=.25]
	\draw[thick] (-2,0) circle [radius=1];
	\draw[thick] (-2.9,0.5) -- (-1.1,0.5);
	\draw[thick] (-2.9,-0.5) -- (-1.1,-0.5);
\end{tikzpicture}$
and
$\begin{tikzpicture}[baseline={([yshift=-.5ex]current bounding box.center)},scale=.25]
	\draw[thick] (-2,0) circle [radius=1];
	\draw[thick] (-2.9,-0.5) -- (-2,0);
	\draw[thick] (-2,0) -- (-1.1,-0.5);
	\draw[thick] (-2,0) -- (-2,1);
\end{tikzpicture}$.
By IHX relation in the space of Jacobi diagrams
$
\begin{tikzpicture}[baseline={([yshift=-.5ex]current bounding box.center)},scale=.25]
	\draw[thick] (-2,0) circle [radius=1];
	\draw[thick] (-2.9,0.5) -- (-1.1,0.5);
	\draw[thick] (-2.9,-0.5) -- (-1.1,-0.5);
\end{tikzpicture}
= 2\;\;
\begin{tikzpicture}[baseline={([yshift=-.5ex]current bounding box.center)},scale=.25]
	\draw[thick] (-2,0) circle [radius=1];
	\draw[thick] (-2.9,-0.5) -- (-2,0);
	\draw[thick] (-2,0) -- (-1.1,-0.5);
	\draw[thick] (-2,0) -- (-2,1);
\end{tikzpicture},
$
and we send $\begin{tikzpicture}[baseline={([yshift=-.5ex]current bounding box.center)},scale=.25]
	\draw[thick] (-2,0) circle [radius=1];
	\draw[thick] (-2.9,0.5) -- (-1.1,0.5);
	\draw[thick] (-2.9,-0.5) -- (-1.1,-0.5);
\end{tikzpicture}$
to $1$. This explains why we have a factor $\frac{1}{2}$ in the explicit formula for $\nabla_g$.
\end{remark}

By construction the bilinear forms $\Upsilon_g$ and $\nabla_g$ satisfy properties $(1')$ (stability) and $(2')$ ($GL_g(\mathbb{Z})$-invariance) but they do not satisfy property $(3')$, they are too symmetric. To get bilinear forms that satisfy also property $(3')$ we desymmetrize the aforementioned bilinear forms by restricting them to $W(ab^3)\otimes W(a^3b)$ and $W(b^4)\otimes W(a^4)$ respectively. More precisely, given the decomposition 
\begin{equation*}
	\mathcal{A}_2(H_\mathbb{Q})= W(a^4)\oplus W(a^3b)\oplus W(a^2b^2)\oplus W(ab^3)\oplus W(b^4).
\end{equation*}
we denote by $\pi_{st}: \mathcal{A}_2(H_\mathbb{Q}) \rightarrow W(a^sb^t)$ the canonical projection and set
\[
\begin{tikzcd}
	Q_g: \mathcal{A}_2(H_\mathbb{Q}) \otimes \mathcal{A}_2(H_\mathbb{Q}) \ar[r,"\pi_{13}\otimes\pi_{31}"] & W(ab^3)\otimes W(a^3b) \ar[r, "\Upsilon_g"] & \mathbb{Q}. \\
	J_g: \mathcal{A}_2(H_\mathbb{Q}) \otimes \mathcal{A}_2(H_\mathbb{Q}) \ar[r,"\pi_{04}\otimes\pi_{40}"] & W(b^4)\otimes W(a^4) \ar[r, "\nabla_g"] & \mathbb{Q}.
\end{tikzcd}
\]

A direct inspection shows that $Q_g$ and $J_g$ satisfy properties $(1')$-$(3')$. Moreover by direct computation, on the generators given in Proposition \ref{prop:gen-m(2)-(1)-(3)} they take the values $(1,0)$ and $(0,1)$ respectively. Therefore we have the following result.

\begin{proposition}
\label{prop:bil-gen-1-3}
The $\mathbb{Q}$-vector space of bilinear forms on $\mathfrak{m}_{g,1}(2)$ that satisfy properties $(1')$-$(3')$ has dimension $2$ with a basis given by $Q_g$ and $J_g$.\footnote{The map $J_g$ is analogous to the one denoted ${}^tJ_g$ and related to the Casson invariant in~\cite{pitsch}, we drop here the transpose sign which is completely irrelevant.}
\end{proposition}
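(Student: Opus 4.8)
The plan is to prove the two inequalities $\dim \le 2$ and $\dim \ge 2$ separately. Essentially all of the substantive work has been front-loaded into Proposition~\ref{prop:gen-m(2)-(1)-(3)} and into the explicit constructions of $Q_g$ and $J_g$ recorded just above the statement, so the proof will be a short assembly of those facts.

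First I would establish the upper bound. Using the identification $\mathfrak{m}_{g,1}(2) \simeq \mathcal{A}_2(H_\mathbb{Q})$, a bilinear form on $\mathfrak{m}_{g,1}(2)$ satisfying property $(2')$ is exactly a linear functional on the coinvariants $\bigl(\mathcal{A}_2(H_\mathbb{Q}) \otimes \mathcal{A}_2(H_\mathbb{Q})\bigr)_{GL_g(\mathbb{Z})}$, and property $(3')$ forces this functional to factor through the further quotient by the images of $\tau_2(\mathcal{A}_{g,1}(2)) \otimes \mathcal{A}_2(H_\mathbb{Q})$ and of $\mathcal{A}_2(H_\mathbb{Q}) \otimes \tau_2(\mathcal{B}_{g,1}(2))$. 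By Proposition~\ref{prop:gen-m(2)-(1)-(3)} that quotient is spanned by the two tensors
\[
\tree{b_1}{b_2}{b_3}{b_4}\otimes \tree{a_1}{a_2}{a_3}{a_4}\quad \text{and}\quad \tree{b_2}{b_3}{b_4}{a_2}\otimes \tree{a_1}{a_3}{a_4}{b_1}.
\]
Hence the rule ``evaluate on these two generators'' gives an injective linear map from the space of bilinear forms satisfying $(2')$ and $(3')$ into $\mathbb{Q}^2$. Since the forms satisfying $(1')$--$(3')$ constitute a subspace of those satisfying $(2')$--$(3')$, this space has dimension at most $2$.

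For the lower bound I would invoke the two explicit forms. As already observed before the statement, $Q_g$ and $J_g$ satisfy $(1')$ (their defining formulas are uniform in $g$ and compatible with the stabilization inclusion), $(2')$ (they are built from the $GL_g(\mathbb{Z})$-equivariant ingredients $\omega$, $\overline{\omega}$, $C_S$, $\eta_S$ and $\nabla_g$), and $(3')$ (precomposition with $\pi_{13}\otimes\pi_{31}$, resp.\ $\pi_{04}\otimes\pi_{40}$, annihilates every summand of $\mathcal{A}_2(H_\mathbb{Q})$ contained in $\tau_2(\mathcal{A}_{g,1}(2))$ in the left slot and in $\tau_2(\mathcal{B}_{g,1}(2))$ in the right slot; for $Q_g$ this uses that $C_S$ kills $W_0(ab^3)$, for $J_g$ it is immediate since $W(b^4)$, resp.\ $W(a^4)$, is disjoint from those images). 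The direct computation recorded above shows that $(Q_g,J_g)$ take the values $(1,0)$ and $(0,1)$ on the two generators, so $Q_g$ and $J_g$ are linearly independent. Combined with the upper bound, they form a basis, which is the assertion.

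The only genuinely delicate points in this circle of ideas — reducing the coinvariants to the two generators, and checking that the desymmetrizing projections land in the complements of the handlebody images so that $(3')$ really holds for $Q_g$ — are precisely what was settled in Proposition~\ref{prop:gen-m(2)-(1)-(3)} and in the paragraph preceding the statement. Consequently, for the present proposition there is no remaining obstacle: once those inputs are in place, the dimension count above is essentially a two-line argument.
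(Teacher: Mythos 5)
Your proof is correct and follows essentially the same route as the paper: the upper bound $\dim\le 2$ is exactly the content of Proposition~\ref{prop:gen-m(2)-(1)-(3)}, and the lower bound comes from checking that $Q_g$ and $J_g$ satisfy $(1')$--$(3')$ and evaluate to linearly independent vectors on the two generators. Your added justification that property $(3')$ holds for $Q_g$ because $C_S$ coincides with $Tr^A$ (resp.\ $Tr^B$) on $W(ab^3)$ (resp.\ $W(a^3b)$) and hence kills $W_0(ab^3)$ (resp.\ $W_0(a^3b)$) is a correct elaboration of what the paper leaves as a ``direct inspection.''
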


Next we compute the linear combination of $Q_g$ and $J_g$ that gives us the bilinear form $B_g$ that appears in Proposition~\ref{prop:coefarecomputed} and hence the bilinear form $C_g$ associated to $\lambda_2$.

\begin{proposition}
\label{prop:coefbilarecomputed}
The 2-cocycle $C_g$ associated to the second Ohtsuki invariant $\lambda_2$ is the bilinear form
\[ 36\lambda\cdot \lambda +3\tau_2^*(J_g)+ \dfrac{3}{4}\tau_2^* (Q_g).\]
\end{proposition}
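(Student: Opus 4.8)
The plan is to pin down the two unknown coefficients in $B_g$ by evaluating the cocycle $C_g$ on explicit mapping classes. By Proposition~\ref{prop:coefarecomputed}(1) we already know that $C_g=36\,\lambda\cdot\lambda+B_g$, where $B_g$ is the pull-back along $\tau_2$ of a $GL_g(\mathbb{Z})$-invariant bilinear form on $\mathfrak{m}_{g,1}(2)$ that vanishes on $\tau_2(\mathcal{A}_{g,1}(2))$ in the left entry and on $\tau_2(\mathcal{B}_{g,1}(2))$ in the right entry. By Propositions~\ref{prop:gen-m(2)-(1)-(3)} and~\ref{prop:bil-gen-1-3} there are therefore unique rationals $a,b$, independent of $g$ for $g\ge 5$ by stability, with $B_g=a\,\tau_2^{\ast}(J_g)+b\,\tau_2^{\ast}(Q_g)$; and since $J_g$, $Q_g$ are (up to order) dual to the two coinvariant generators
\[
\tree{b_1}{b_2}{b_3}{b_4}\otimes \tree{a_1}{a_2}{a_3}{a_4},\qquad \tree{b_2}{b_3}{b_4}{a_2}\otimes \tree{a_1}{a_3}{a_4}{b_1}
\]
produced in Proposition~\ref{prop:gen-m(2)-(1)-(3)}, it suffices to compute $C_g$ on one pair of mapping classes whose image under $\tau_2\otimes\tau_2$ is each of these two generators.

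First I would produce such mapping classes. Taking $g$ large (say $g\ge 8$, so that all the curves below embed disjointly) I would write down suitable Dehn twists about separating simple closed curves and bounding-pair maps supported on an explicit genus-four subsurface of $\Sigma_{g,1}$, and use Morita's formula for the second Johnson homomorphism on such elements to arrange that $\tau_2(\phi)\otimes\tau_2(\psi)$ represents the first generator in the coinvariants, and that $\tau_2(\phi')\otimes\tau_2(\psi')$ represents the second. I would moreover choose the supporting curves so that the individual twisted spheres $\mathbb{S}^3_{\phi},\mathbb{S}^3_{\psi},\mathbb{S}^3_{\phi'},\mathbb{S}^3_{\psi'}$ are as simple as possible — ideally the standard sphere, and in any case connected sums of Brieskorn spheres whose Casson and second Ohtsuki invariants are recorded in~\cite{lin,lin2} — so that in
\[
C_g(\phi,\psi)=\lambda_2(\mathbb{S}^3_{\phi})+\lambda_2(\mathbb{S}^3_{\psi})-\lambda_2(\mathbb{S}^3_{\phi\psi})
\]
only the last term is genuinely unknown, and likewise for $(\phi',\psi')$.

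The heart of the matter is the evaluation of $\lambda_2(\mathbb{S}^3_{\phi\psi})$ and $\lambda_2(\mathbb{S}^3_{\phi'\psi'})$. For this I would pass to a surgery presentation of these homology spheres — either through Ohtsuki's surgery framework for $\lambda_2$ on algebraically split links as made explicit in~\cite{lin}, or, more conveniently, through the clasper description of Torelli twists — and combine it with the connected-sum formula $\lambda_2(M_1\#M_2)=\lambda_2(M_1)+\lambda_2(M_2)+36\lambda(M_1)\lambda(M_2)$ and the orientation-reversal formula $\lambda_2(\overline M)=\lambda_2(M)+6\lambda(M)$ already used in the proof of Proposition~\ref{prop:coefarecomputed}. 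Substituting the resulting numbers into $C_g(\phi,\psi)-36\lambda(\phi)\lambda(\psi)$ and $C_g(\phi',\psi')-36\lambda(\phi')\lambda(\psi')$ and reading off (or solving for) $a$ and $b$ yields $a=3$ and $b=\tfrac34$, that is, $C_g=36\,\lambda\cdot\lambda+3\,\tau_2^{\ast}(J_g)+\tfrac34\,\tau_2^{\ast}(Q_g)$. An alternative route, which I would keep as a cross-check, is to use that $\lambda_2$ is up to $\lambda^2$ a degree-$2$ coefficient of the LMO invariant, whose behaviour under Torelli surgery is governed precisely by the symplectic contractions $\Upsilon_g$ and $\nabla_g$.

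The main obstacle is precisely this surgery computation. One must choose the test mapping classes so that, simultaneously, (i) $\tau_2\otimes\tau_2$ lands modulo $GL_g(\mathbb{Z})$ and modulo the handlebody subgroups on exactly one of the two generators — so that no contribution coming from $W(a^2b^2)$, $W_0(ab^3)$ or $W_0(a^3b)$ survives — and (ii) the homology sphere one obtains is explicit enough for $\lambda_2$ to be read off. Controlling the signs in Morita's formula for $\tau_2$ and in the explicit expressions for $\nabla_g$ and $\Upsilon_g$, and checking that the double-coset relation of Proposition~\ref{thm:Johnsonandhlgyspheres} does not wash out the information carried by the chosen pairs, are where the care is needed; the remainder is bilinear bookkeeping.
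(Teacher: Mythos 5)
Your overall skeleton agrees with the paper's: both start from $C_g=36\,\lambda\cdot\lambda+B_g$ with $B_g=r_1\tau_2^*(J_g)+r_2\tau_2^*(Q_g)$ and determine $r_1,r_2$ from a $2\times 2$ linear system obtained by evaluating $C_g$ on test elements. But there is a genuine gap: the decisive content of the proof is the actual production of two computable evaluations, and you leave this entirely unexecuted — you assert that "substituting the resulting numbers yields $a=3$, $b=\tfrac34$" without ever specifying the mapping classes, their $\tau_2$-images, or the values of $\lambda_2$. Moreover, the route you sketch is harder than necessary. You propose to realize each of the two coinvariant generators of Proposition~\ref{prop:gen-m(2)-(1)-(3)} by a pair $\tau_2(\phi)\otimes\tau_2(\psi)$; this forces you to engineer elements whose second Johnson images land cleanly in $W(b^4)\otimes W(a^4)$ and $W_1(ab^3)\otimes W_1(a^3b)$ modulo the handlebody images, and then to identify the resulting homology spheres well enough to evaluate $\lambda_2$ — a realization problem you acknowledge but do not solve. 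The paper avoids it entirely: it suffices that the evaluation matrix of $(\tau_2^*J_g,\tau_2^*Q_g)$ on two convenient pairs be invertible, and diagonal evaluations $B_g(T_K,T_K)$, $B_g(T_L,T_L)$ on two genus-one bounding-simple-closed-curve twists do the job.

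The key enabling observation you are missing is the choice of those twists: take $K$ the right-handed trefoil and $L$ the figure-eight knot, embedded in $\Sigma_{2,1}\subseteq\mathbb{S}^3$ as genus-one BSCCs. Then $\mathbb{S}^3_{T_K^n}=S^3_{K,1/n}$, so Lin--Wang's closed formula for $\lambda$ and $\lambda_2$ under $1/n$-surgery (in terms of $v_2$, $v_3$ and the Conway coefficient $c_4$) gives $B_g(T_K,T_K)=\lambda_2(T_K^2)-2\lambda_2(T_K)-36\lambda(T_K)^2=72$ and $B_g(T_L,T_L)=96$ with no further geometric input; and $\tau_2(T_K)$, $\tau_2(T_L)$ are given in closed form by the standard formula for genus-one BSCC twists, from which $J_g$ and $Q_g$ evaluate to $(12,48)$ and $(12,80)$ respectively. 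Without an equally concrete choice, your plan does not yet constitute a proof: the "bilinear bookkeeping" you defer is precisely where the constants $3$ and $\tfrac34$ come from.
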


\begin{proof}
By Proposition \ref{prop:bil-gen-1-3} there exist $r_1,r_2\in \mathbb{Q}$ such that
\begin{equation}
\label{eq:bil-lindep}
B_g= r_1\tau_2^*(J_g)+ r_2\tau_2^* (Q_g).
\end{equation}

By construction (Proposition~\ref{prop:coefarecomputed}) we know that given $f,h\in \mathcal{M}_{g,1}(2),$
\begin{equation}
\label{eq:bil-calcul-d2}
B_g (f,h)=\lambda_2(fh)-\lambda_2(f)-\lambda_2(h)-36\lambda(f)\lambda(h).
\end{equation}

Let $K$ denote the right hand trefoil knot and $L$ the figure eight knot embedded in $\Sigma_{2,1}$ as in Figure~\ref{fig:trefoil-eight-surgery}. The curves $K$ and $L$ are bounding simple closed curves of genus $1$, therefore the associated right hand Dehn twists $T_K, T_L\in \mathcal{M}_{2,1}(2)$. We now compute the values of $\tau_2^*(J_g)$, $\tau_2^* (Q_g)$ and $B_g$ on $T_K\otimes T_K$ and $T_L\otimes T_L$. Then equation \eqref{eq:bil-lindep} will provide us a system of two linear equations in the two variables $r_1$ and $r_2$.

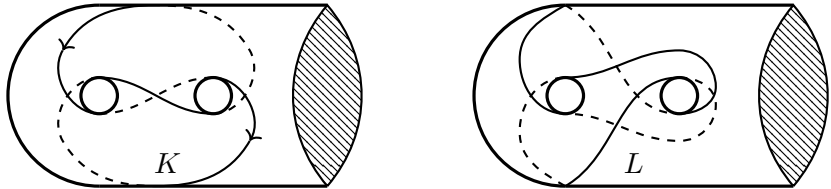
\begin{figure}[H]

	\begin{subfigure}{0.4\textwidth}
	\begin{center}
\begin{tikzpicture}[scale=.6]
\draw[very thick] (-4.5,-2) -- (0.5,-2);
\draw[very thick] (-4.5,2) -- (0.5,2);
\draw[very thick] (-4.5,2) arc [radius=2, start angle=90, end angle=270];
\draw[very thick] (-4.5,0) circle [radius=.4];
\draw[very thick] (-2,0) circle [radius=.4];

\draw[thick] (-4.5,0.4) to [out=0,in=180] (-2,-0.4) ;
\draw[dashed, thick] (-4.5,-0.4) to [out=0,in=180] (-2,0.4);
\draw[dashed, thick] (-2,-0.4) to [out=0,in=-60] (-1.2,1) to [out=120,in=0] (-3.25,2);
\draw[dashed, thick] (-4.5,0.4) to [out=180,in=120] (-5.3,-1) to [out=-60,in=180] (-3.25,-2);
\draw[->,thick] (-2,0.4) to [out=0,in=60] (-1.2,-1);
\draw[thick] (-1.2,-1) to [out=240,in=0] (-3.25,-2);
\draw[thick] (-4.5,-0.4) to [out=180,in=240] (-5.3,1);
\draw[->,thick](-3.25,2) to [out=180,in=60] (-5.3,1);

\draw[thick,pattern=north west lines] (0.5,-2) to [out=130,in=-130] (0.5,2) to [out=-50,in=50] (0.5,-2);

\node at (-3,-1.5) {$K$};
\end{tikzpicture}
\end{center}
	\end{subfigure}
	\begin{subfigure}{0.4\textwidth}
	\begin{center}
\begin{tikzpicture}[scale=.6]
\draw[very thick] (-4.5,-2) -- (0.5,-2);
\draw[very thick] (-4.5,2) -- (0.5,2);
\draw[very thick] (-4.5,2) arc [radius=2, start angle=90, end angle=270];
\draw[very thick] (-4.5,0) circle [radius=.4];
\draw[very thick] (-2,0) circle [radius=.4];

\draw[thick] (-2,-0.4) to [out=0,in=-90] (-1.2,0.2)
to [out=90,in=0] (-2,1) to [out=180,in=0] (-4.5,0.4);
\draw[dashed,thick] (-2,0.4) to [out=0,in=90] (-1.2,-0.2)
to [out=-90,in=0] (-2,-1) to [out=180,in=0] (-4.5,-0.4);

\draw[ thick] (-4.5,-2) to [out=30,in=180] (-2,0.4);
\draw[dashed, thick] (-4.5,2) to [out=-30,in=180] (-2,-0.4);

\draw[ dashed,thick] (-4.5,-2) to [out=150,in=-90] (-5.5,-0.8) to [out=90,in=180] (-4.5,0.4);
\draw[ thick] (-4.5,2) to [out=-150,in=90] (-5.5,0.8) to [out=-90,in=180] (-4.5,-0.4);

\draw[thick,pattern=north west lines] (0.5,-2) to [out=130,in=-130] (0.5,2) to [out=-50,in=50] (0.5,-2);

\node at (-3,-1.5) {$L$};

\end{tikzpicture}
\end{center}
\end{subfigure}

	\caption{Trefoil knot $K$ and eight knot $L$ embedded in $\Sigma_{2,1}$}
	\label{fig:trefoil-eight-surgery}
\end{figure}

All our computations will be based on
\begin{proposition}[Theorem 5.2 in \cite{lin2}]
	\label{prop:comput-lambdas}
	Let $n$ be an integer, and $S^3_{K,1/n}$ be the homology 3-sphere obtained from $1/n$-surgery on a knot $K$. Then
	\begin{enumerate}[i)]
		\item $\lambda(S^3_{K,1/n})=-\dfrac{n}{6}\cdot v_2(K)$,
		\item $\lambda_2(S^3_{K,1/n})=\dfrac{n}{2} v_2(K)-\dfrac{n}{3}v_3(K)+n^2 \left[ v_2(K)+\dfrac{5}{3}v_2^2(K)-60c_4(K)\right]$.
	\end{enumerate}
	where $c_4(K)$ is the coefficient of $z^4$ in the Conway polynomial of $K$ and $v_i(K)$ is the $i$-th derivative of the Jones polynomial $V(K,e^{-h})$ at $h=0$.
\end{proposition}

\textbf{For the right hand trefoil knot $K$.}
The Conway polynomial and the Jones polynomial of $K$ are respectively (cf. \cite[Section 1.2]{ohtsuki3})
\[\nabla(z)=z^2+1 \quad\text{and}\quad V(K,t)=t+t^3-t^4.\]
From this we get $c_4(K)=0$ and $v_2(K)=-6$, hence $\lambda(T_K)=\lambda(S^3_{K,1})=1$ and 
\[
\lambda_2(T_K^2)-2\lambda_2(T_K)=\lambda_2(S^3_{K,1/2})-2\lambda_2(S^3_{K,1})=2(v_2(K)+\dfrac{5}{3}v_2^2(K)-60c_4(K))=108.
\]
Therefore by equation \eqref{eq:bil-calcul-d2} we get that
\begin{equation}
\label{eq:lincomb-bil}
B_g(T_K,T_K) =72.
\end{equation}

Next we compute $\tau_2(T_K)$.
Consider the colored curves in Figure~\ref{fig:trefoil-surgery}. These two curves form a symplectic basis of the first homology group of the subsurface bounded by $K$ that does not contain the marked disk.

\begin{figure}[H]
	\begin{center}
\begin{tikzpicture}[scale=.6]
\draw[very thick] (-4.5,-2) -- (0.5,-2);
\draw[very thick] (-4.5,2) -- (0.5,2);
\draw[very thick] (-4.5,2) arc [radius=2, start angle=90, end angle=270];
\draw[very thick] (-4.5,0) circle [radius=.4];
\draw[very thick] (-2,0) circle [radius=.4];

\draw[thick] (-4.5,0.4) to [out=0,in=180] (-2,-0.4) ;
\draw[dashed, thick] (-4.5,-0.4) to [out=0,in=180] (-2,0.4);
\draw[dashed, thick] (-2,-0.4) to [out=0,in=-60] (-1.2,1) to [out=120,in=0] (-3.25,2);
\draw[dashed, thick] (-4.5,0.4) to [out=180,in=120] (-5.3,-1) to [out=-60,in=180] (-3.25,-2);
\draw[->,thick] (-2,0.4) to [out=0,in=60] (-1.2,-1);
\draw[thick] (-1.2,-1) to [out=240,in=0] (-3.25,-2);
\draw[thick] (-4.5,-0.4) to [out=180,in=240] (-5.3,1);
\draw[->,thick](-3.25,2) to [out=180,in=60] (-5.3,1);

\draw[dashed, very thick, blue] (-4.2,0.2) to [out=40,in=0] (-4.5,1) to [out=180,in=60] (-6.5,0);
\draw[very thick, blue] (-4.2,0.2) to [out=-20,in=90] (-3.6,-0.6) to [out=-90,in=-60] (-6.5,0);

\draw[very thick, red] (-1.6,0) to [out=-20,in=90] (-1.3,-0.6) to [out=-90,in=0] (-2,-1) to [out=180,in=-20] (-4.2,-0.2);
\draw[dashed, very thick, red] (-1.6,0) to [out=20,in=-90] (-1.3,0.6) to [out=90,in=0] (-2,1) to [out=180,in=20] (-4.2,-0.2);

\draw[thick,pattern=north west lines] (0.5,-2) to [out=130,in=-130] (0.5,2) to [out=-50,in=50] (0.5,-2);
\end{tikzpicture} 
\end{center}
	\caption{Trefoil knot embedded in $\Sigma_{2,1}$}
	\label{fig:trefoil-surgery}
\end{figure}
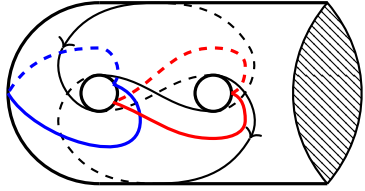

Moreover the homology classes of blue and red curves are $a_1+b_1$ and $a_2-b_1+b_2$ respectively. Then by \cite[Remark 4.1]{pitsch3} we have that

\[\tau_2(T_{K})=2\tree{a_1+b_1\quad\quad}{a_2-b_1+b_2\quad\quad}{\quad\quad a_1+b_1}{\quad\quad a_2-b_1+b_2}\]

The projections of $\tau_2(T_K)$ to $W(b^4)$, $W(ab^3)$, $W(a^4)$ and $W(a^3b)$ are then
\[
\begin{aligned}
\pi_{04}(\tau_2(T_K)) & = 2\tree{b_1}{b_2}{b_1}{b_2}, \\
\pi_{40}(\tau_2(T_K)) & = 2\tree{a_1}{a_2}{a_1}{a_2},
\end{aligned}
\qquad
\begin{aligned}
\pi_{13}(\tau_2(T_K)) & =4\tree{a_1}{-b_1+b_2}{b_1}{b_2}+4\tree{b_1}{a_2}{b_1}{b_2}, 
\\
\pi_{31}(\tau_2(T_K)) & = 4\tree{b_1}{a_2}{a_1}{a_2}+4\tree{a_1}{-b_1+b_2}{a_1}{a_2} .
\end{aligned}
\]

From this we compute:
\begin{align*}
Q_g(\tau_2(T_K),\tau_2(T_K))& = \eta_{S}(C_S(\pi_{13}(\tau_2(T_K))),C_S(\pi_{31}(\tau_2(T_K))))\\
& = 16 \;\eta_{S}(b_1b_2-b_2b_2-b_1b_1, -a_2a_2-a_1a_2-a_1a_1) \\
& = 16(-1 +2+2)=48,\\
J_g(\tau_2(T_K),\tau_2(T_K)) & = 4\nabla_g(\tree{b_2}{b_1}{b_2}{b_1}, \tree{a_2}{a_1}{a_2}{a_1}) = 4\cdot3=12.
\end{align*}

Then from equations \eqref{eq:bil-lindep} and \eqref{eq:lincomb-bil} we have that
\begin{equation}
\label{eq:computB1}
12r_1+48r_2=72.
\end{equation}

\textbf{For the figure eight knot $L$.}
The Conway polynomial and the Jones polynomial of $L$ are respectively (cf. \cite[Section 1.2]{ohtsuki3})
\[\nabla(z)=1-z^2 \quad \text{and} \quad V(L,t)=t^2-t+1-t^{-1}+t^{-2}.\]
From this we get $c_4(L)=0$ and $v_2(L)=6$, hence
 $\lambda(T_L)=\lambda(S^3_{L,1})=-1$ and 
\[
\lambda_2(T_L^2)-2\lambda_2(T_L)=\lambda_2(S^3_{L,1/2})+2\lambda_2(S^3_{L,1})=2(v_2(L)+\dfrac{5}{3}v_2^2(L)-60c_4(L))=132.
\]

Therefore by equation \eqref{eq:bil-calcul-d2} we get that
\begin{equation}
\label{eq:lincomb-bil2}
B_g(T_L,T_L) =96.
\end{equation}

Next we compute $\tau_2(T_L)$.
Consider the colored curves depicted in Figure~\ref{fig:eight-surgery}. These two curves form a symplectic basis of the first homology group of the subsurface bounded by $L$ that does not contain the marked disk.

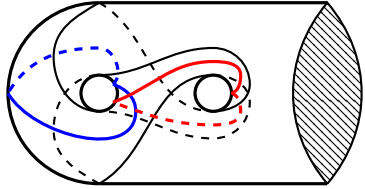
\begin{figure}[H]
	\begin{center}
\begin{tikzpicture}[scale=.6]
\draw[very thick] (-4.5,-2) -- (0.5,-2);
\draw[very thick] (-4.5,2) -- (0.5,2);
\draw[very thick] (-4.5,2) arc [radius=2, start angle=90, end angle=270];
\draw[very thick] (-4.5,0) circle [radius=.4];
\draw[very thick] (-2,0) circle [radius=.4];

\draw[thick] (-2,-0.4) to [out=0,in=-90] (-1.2,0.2)
to [out=90,in=0] (-2,1) to [out=180,in=0] (-4.5,0.4);
\draw[dashed,thick] (-2,0.4) to [out=0,in=90] (-1.2,-0.2)
to [out=-90,in=0] (-2,-1) to [out=180,in=0] (-4.5,-0.4);

\draw[ thick] (-4.5,-2) to [out=30,in=180] (-2,0.4);
\draw[dashed, thick] (-4.5,2) to [out=-30,in=180] (-2,-0.4);

\draw[ dashed,thick] (-4.5,-2) to [out=150,in=-90] (-5.5,-0.8) to [out=90,in=180] (-4.5,0.4);
\draw[ thick] (-4.5,2) to [out=-150,in=90] (-5.5,0.8) to [out=-90,in=180] (-4.5,-0.4);

\draw[dashed,very thick, blue] (-4.2,0.2) to [out=40,in=0] (-4.5,1) to [out=180,in=60] (-6.5,0);
\draw[very thick, blue] (-4.2,0.2) to [out=-20,in=90] (-3.7,-0.4) to [out=-90,in=-60] (-6.5,0);

\draw[dashed, very thick, red] (-1.6,0) to [out=-20,in=90] (-1.4,-0.4) to [out=-90,in=0] (-2,-0.7) to [out=180,in=-20] (-4.2,-0.2);
\draw[very thick, red] (-1.6,0) to [out=20,in=-90] (-1.4,0.4) to [out=90,in=0] (-2,0.7) to [out=180,in=20] (-4.2,-0.2);

\draw[thick,pattern=north west lines] (0.5,-2) to [out=130,in=-130] (0.5,2) to [out=-50,in=50] (0.5,-2);
\end{tikzpicture}
\end{center}
	\caption{Eight knot embedded in $\Sigma_{2,1}$}
	\label{fig:eight-surgery}
\end{figure}

Moreover the homology classes of blue and red curves are $a_1+b_1$ and $a_2+b_1-b_2$ respectively. Again by \cite[Remark 4.1]{pitsch3} we have that

\[\tau_2(T_{L})=2\tree{a_1+b_1\quad\quad}{a_2+b_1-b_2\quad\quad}{\quad\quad a_1+b_1}{\quad\quad a_2+b_1-b_2}\]

The projections of $\tau_2(T_L)$ to $W(b^4)$, $W(ab^3)$, $W(a^4)$ and $W(a^3b)$ are:
\[
\begin{aligned}
\pi_{04}(\tau_2(T_L)) & = 2\tree{b_1}{b_2}{b_1}{b_2}, \\
\pi_{40}(\tau_2(T_L))  & = 2\tree{a_1}{a_2}{a_1}{a_2},
\end{aligned}
\qquad
\begin{aligned}
\pi_{13}(\tau_2(T_L)) & =-4\tree{a_1}{b_1-b_2}{b_1}{b_2}-4\tree{b_1}{a_2}{b_1}{b_2}, \\
\pi_{31}(\tau_2(T_L)) & = 4\tree{b_1}{a_2}{a_1}{a_2}+4\tree{a_1}{b_1-b_2}{a_1}{a_2}.
\end{aligned}
\]

From here we compute:
\begin{align*}
Q_g(\tau_2(T_L),\tau_2(T_L))& = \eta_{S}(C_S(\pi_{13}(\tau_2(T_L))),C_S(\pi_{31}(\tau_2(T_L))))\\
& = 16 \;\eta_{S}(b_1b_2-b_2b_2+b_1b_1, -a_2a_2+a_1a_2+a_1a_1) \\
& = 16(1+2+2)=80,\\
J_g(\tau_2(T_L),\tau_2(T_L)) & = 4\nabla_g(\tree{b_1}{b_2}{b_1}{b_2}, \tree{a_1}{a_2}{a_1}{a_2})= 4\cdot3=12.
\end{align*}

Then from equations \eqref{eq:bil-lindep} and \eqref{eq:lincomb-bil2} we have that
\begin{equation}
\label{eq:computB2}
12r_1+80r_2=96.
\end{equation}

Finally solving the system of equations \eqref{eq:computB1} and \eqref{eq:computB2} we get that $r_1=3$ and $r_2=3/4$.

\end{proof}

%
%

\end{document}